\DeclareMathOperator{\Real}{Re}
\DeclareMathOperator{\Id}{Id}
\DeclareMathOperator{\ran}{ran}
\spnewtheorem*{remark*}{Remark}{\it}{\rm}
\journalname{Research in the Mathematical Sciences}
\begin{document}

\title{Delay-coordinate maps, coherence, and approximate spectra of evolution operators\thanks{This paper is dedicated to Andrew Majda on the occasion of his 70th birthday.} 
}


\author{Dimitrios Giannakis}


\institute{D. Giannakis \at
              Department of Mathematics and Center for Atmosphere Ocean Science \\
              Courant Institute of Mathematical Sciences \\
              New York University\\
              251 Mercer St, New York, NY 10012, USA\\
              \email{dimitris@cims.nyu.edu}           
           }

\date{Received: date / Accepted: date}

\maketitle

\begin{abstract}
    The problem of data-driven identification of coherent observables of measure-preserving, ergodic dynamical systems is studied using kernel integral operator techniques. An approach is proposed whereby complex-valued observables with approximately cyclical behavior are constructed from a pair of eigenfunctions of integral operators built from delay-coordinate mapped data. It is shown that these observables are $\epsilon$-approximate eigenfunctions of the Koopman evolution operator of the system, with a bound $\epsilon$ controlled by the length of the delay-embedding window,  the evolution time, and appropriate spectral gap parameters. In particular, $ \epsilon$ can be made arbitrarily small as the embedding window increases so long as the corresponding eigenvalues remain sufficiently isolated in the spectrum of the integral operator. It is also shown that the time-autocorrelation functions of such observables are $\epsilon$-approximate Koopman eigenvalues, exhibiting a well-defined characteristic oscillatory frequency (estimated using the Koopman generator) and a slowly-decaying modulating envelope. The results hold for measure-preserving, ergodic dynamical systems of arbitrary spectral character, including mixing systems with continuous spectrum and no non-constant Koopman eigenfunctions in $L^2$. Numerical examples reveal a coherent observable of the Lorenz 63 system whose autocorrelation function remains above 0.5 in modulus over approximately 10 Lyapunov timescales.  
    \keywords{Kernel integral operators \and Delay-coordinate maps \and Koopman operators \and Feature extraction \and Ergodic dynamical systems}
\end{abstract}

\section{\label{secIntro}Introduction}

\subsection{Background}
In the papers \cite{GiannakisMajda11c,GiannakisMajda12a,GiannakisMajda13}, A.\ J.\ Majda and the author proposed a decomposition technique for multivariate time series, called nonlinear Laplacian spectral analysis (NLSA), combining aspects of delay-coordinate maps of dynamical systems with kernel methods for machine learning. NLSA treats the sampled time series as an observable of a dynamical system, and embeds it into a higher-dimensional space using Takens' method of delays \cite{PackardEtAl80,SauerEtAl91,Takens81}. Nonlinear features (principal components) are then extracted as eigenvectors of a normalized kernel matrix constructed from the delay-embedded data, adopting the perspective of geometrical learning techniques such as Laplacian eigenmaps \cite{BelkinNiyogi03} and diffusion maps \cite{CoifmanLafon06}. One of the principal empirical findings in \cite{GiannakisMajda11c,GiannakisMajda12a,GiannakisMajda13} was that the leading modes in the NLSA decomposition exhibit a coherent temporal evolution, capturing distinct timescales from multiscale input data. Examples include systems of ordinary differential equations with metastable regime behavior \cite{GiannakisMajda12a}, as well as simulated and observed climate data \cite{GiannakisMajda12b,SzekelyEtAl16a}. 

Meanwhile, in independent work \cite{BerryEtAl13}, Berry et al.\ developed an analysis technique called diffusion-mapped delay coordinates (DMDC) which is based on a related delay-coordinate kernel construction, and gave a theoretical interpretation of the timescale separation capability of the DMDC modes using the Oseledets multiplicative ergodic theorem and Lyapunov metrics of dynamical systems. In particular, they showed that under smoothness and hyperbolicity assumptions on the dynamics, and for an appropriately weighted delay-embedding scheme, as the number of delays increases the leading eigenfunctions recovered through the diffusion maps algorithm vary predominantly along the Oseledets subspace associated with the most stable Lyapunov exponent of the system. They then argued that the evolution of these eigenfunctions, viewed as reduced coordinates for the system state, can be well modeled as a gradient flow driven by a nonautonomous perturbation from the remaining degrees of freedom. In this picture, diffusion maps captures the leading eigenfunctions of the generator of a stochastic process,  exhibiting distinct timescales associated with the corresponding eigenvalues.

Besides DMDC and NLSA, several other feature extraction techniques utilizing delay-coordinate maps have been proposed, including early methods such as singular spectrum analysis (SSA) \cite{BroomheadKing86,VautardGhil89} and more recent techniques where connections with operator-theoretic ergodic theory have been emphasized \cite{MezicBanaszuk04,ArbabiMezic17,BruntonEtAl17}. While NLSA and DMDC differ from these methods in the use of nonlinear kernels (which allow recovery of nonlinear features), the general consensus stemming from this body of literature  is that incorporating delays in feature extraction methodologies facilitates the recovery of dynamically relevant, coherent patterns. Note that this property is distinct from topological state space reconstruction from partial observations (which was the original purpose of delay-coordinate maps \cite{PackardEtAl80}), and can be beneficial even under fully observed scenarios. Techniques for coherent feature extraction blending aspects of geometrical integral operators and evolution operators have also received significant attention in the context of non-autonomous dynamical systems \cite{Froyland15,BanischKoltai17,KarraschKeller20}.     

In \cite{Giannakis19,DasGiannakis19}, an interpretation of the timescale separation seen in features recovered from delay-coordinate-mapped data was given through a spectral analysis of kernel integral operators and Koopman evolution operators of dynamical systems \cite{Koopman31,Baladi00,EisnerEtAl15}. Specifically, it was shown that for a measure-preserving ergodic dynamical system, as the number of delays increases, the commutator between kernel integral operators constructed from delay-embedded data (subject to mild requirements) and the Koopman operator converges to zero in operator norm, meaning that these operators acquire common eigenspaces in the infinite-delay limit. Since (i) kernel integral operators associated with sufficiently regular (e.g., continuous) kernels are compact, and thus have finite-dimensional eigenspaces corresponding to nonzero eigenvalues; and (ii) the eigenspaces of Koopman operators of ergodic dynamical systems are one-dimensional, it follows that in the infinite-delay limit, the eigenspaces of the kernel integral operators employed for feature extraction are a finite union of Koopman eigenspaces. The latter are each characterized by a distinct timescale associated with the corresponding eigenvalue of the generator. In applications, it is oftentimes observed that the eigenspaces of kernel integral operators with large numbers of delays are numerically two-dimensional, meaning that they are associated with a single pair of Koopman eigenfrequencies of equal modulus and different sign. Sampled along orbits of the dynamics, such kernel eigenfunctions have the structure of pure sinusoids, which can be thought of as exhibiting an ``ideal'' form of timescale separation.           

A useful aspect of the results in \cite{Giannakis19,DasGiannakis19} is that they hold for broad classes of measure-preserving, ergodic dynamical systems  (including systems with non-smooth attractors) and choices of kernel, and thus provide relevant information about the asymptotic behavior of a variety of feature extraction techniques utilizing delays, including the methods \cite{BroomheadKing86,VautardGhil89,GiannakisMajda11c,GiannakisMajda12a,GiannakisMajda13,BerryEtAl13,ArbabiMezic17,BruntonEtAl17} outlined above. Importantly, the integral operators employed can be consistently approximated in a spectral sense from time series data using well-developed theory  \cite{VonLuxburgEtAl08,TrillosSlepcev18,TrillosEtAl19}.

\subsection{Motivation and contributions of this work}

Despite their generally broad applicability, the results in \cite{Giannakis19,DasGiannakis19} offer limited insight on the behavior of kernel-based feature extraction techniques utilizing delay-coordinate maps for an important class of dynamical systems, namely systems with mixing behavior (or so-called mixed-spectrum systems with both quasiperiodic and mixing components). Indeed, a necessary and sufficient condition for a measure-preserving dynamical system to be mixing is that the generator on the $L^2$ space associated with the invariant measure has a simple eigenvalue at zero, with a constant corresponding eigenfunction, and no other eigenvalues. As a prototypical example, consider the Lorenz 63 (L63) system \cite{Lorenz63} on $\mathbb R^3$, which is rigorously known to possess an ergodic invariant measure $\mu$ supported on the famous ``butterfly'' attractor with mixing dynamics \cite{Tucker99,LuzzattoEtAl05}. According to \cite{DasGiannakis19}, for such a system the kernel integral operator in the infinite-delay limit acquires an infinite-dimensional nullspace containing all $L^2(\mu)$ observables orthogonal to the constant, allowing features with arbitrarily broad frequency spectra (i.e., no timescale separation or coherence). Moreover, data-driven spectral approximation results such as \cite{VonLuxburgEtAl08,TrillosSlepcev18,TrillosEtAl19} do not hold for the potentially infinite-dimensional nullspaces of compact operators.

Yet, as illustrated in Figure~\ref{figPhi}, the eigenfunctions of integral operators based on a sufficiently long delay embedding window, $ T$, exhibit a form of coherence, which can be thought of as a relaxation of the periodic behavior of Koopman eigenfunctions. In particular, for sufficiently large $T$, the time series associated with the kernel eigenfunctions near the top of the spectrum have the structure of amplitude-modulated waves, with a well-defined carrier frequency and a low-frequency modulating envelope. In effect, the pure sinusoids generated by Koopman eigenfunctions can be thought of as special cases of these patterns with constant modulating envelopes. A similar behavior was observed in \cite{SlawinskaGiannakis17}, who found that with increasing number of delays NLSA provides increasingly coherent representations of the El Ni\~no Southern Oscillation of the climate system, as well as other patterns of climate variability. 

\begin{figure}
    \includegraphics[width=\linewidth]{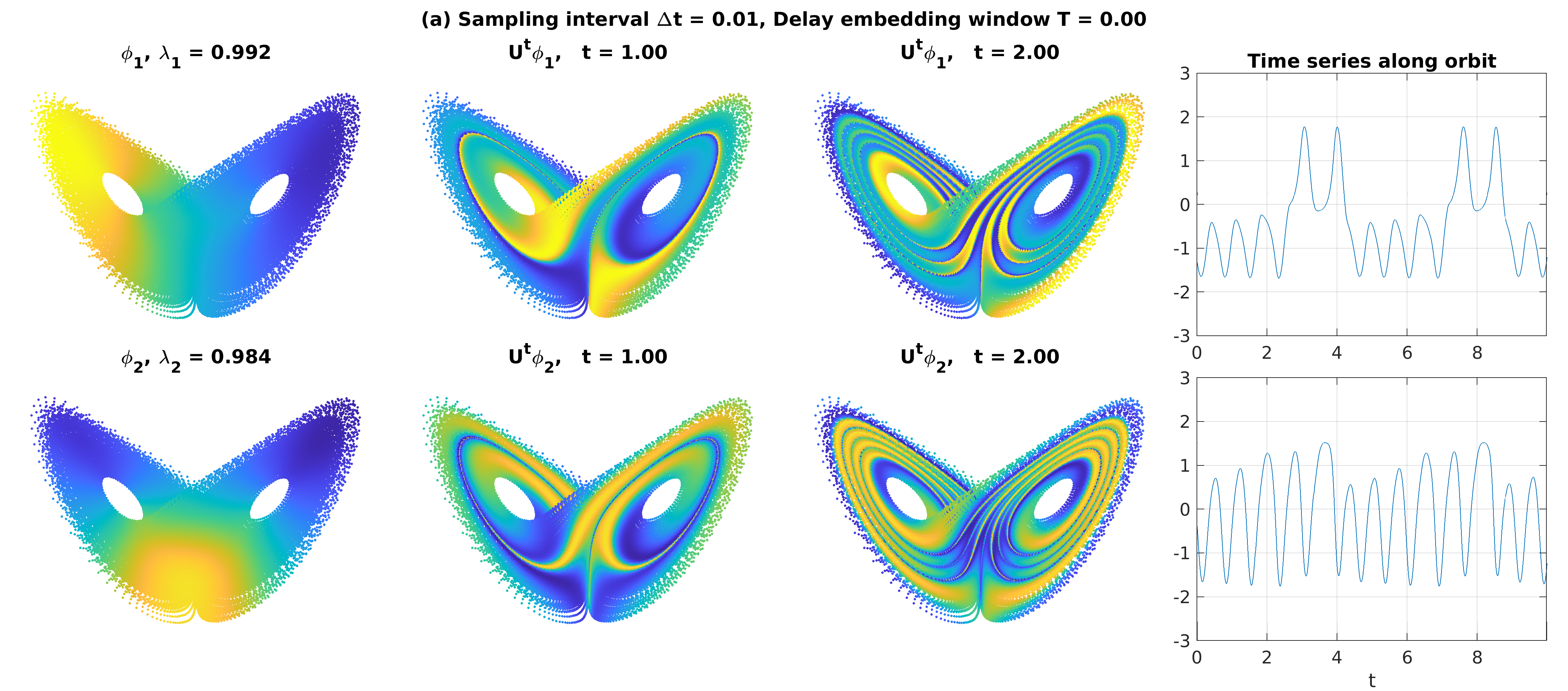}
    \includegraphics[width=\linewidth]{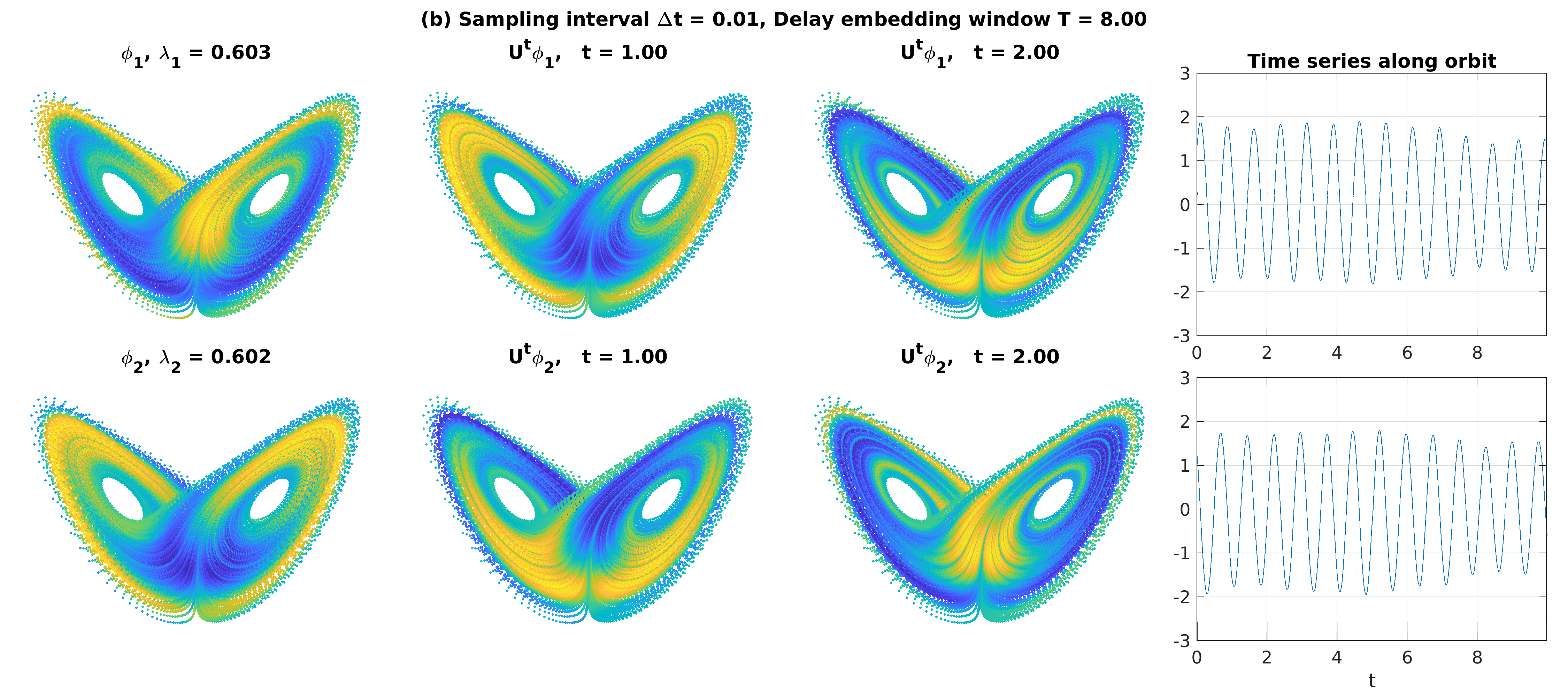}
    \caption{\label{figPhi}Representative eigenfunctions $\phi_{j,T}$ of the integral operator $ K_T $ for (a) no delays, $ T = 0$; and (b) a delay-embedding window $T$ equal to 8 natural time units, numerically approximated from a dataset consisting of $N = \text{64,000}$ samples taken along an orbit of the L63 system at a sampling interval $ \Delta t = 0.01$. In each set of panels, the first and second row show the leading two nonconstant eigenfunctions of $K_T $, in order of decreasing corresponding eigenvalue. The first column from the left shows a scatterplot of $\phi_{j,T}$ on the dataset. The second and third panels show scatterplots of $ \phi_{j,T} $ acted upon by the Koopman operator $U^t$ for time $ t = 1 $ and 2, which corresponds to approximately 1 and 2 Lyapunov characteristic times, respectively. The rightmost column shows a time series of $ \phi_{j,T} $ sampled along a portion of the training trajectory spanning 10 natural time units. The eigenfunctions in (a) exhibit limited dynamical coherence, in the sense that their level sets mix together on times greater than $ \gtrsim 1 $ Lyapunov times. Moreover, their corresponding time series exhibit a broadband frequency spectrum with no apparent phase relationships. In contrast, the eigenfunctions in (b) resist mixing over a period of time spanning multiple Lyapunov times, illustrated by the qualitatively similar nature of the scatterplots of $ \phi_{j,T} $ and  $U^t \phi_{j,T}$ for $ t \in \{ 1, 2 \}$. Furthermore, the time series in (b) have the structure of amplitude-modulated waves with a well-defined carrier frequency and slowly varying modulating envelope, while exhibiting a 90$^\circ$ phase difference to a good approximation.}
\end{figure}

The main contribution of this work is to provide a characterization of the coherence properties of eigenfunctions of integral operators constructed from delay-embedded observables of measure-preserving, ergodic dynamical systems of arbitrary (quasiperiodic, mixing, or mixed-spectrum) spectral characteristics, underpinning the behavior in Figure~\ref{figPhi}. We will do so by studying a class of complex-valued observables $ z $, whose real and imaginary parts are eigenfunctions of an integral operator $K_T : L^2(\mu) \to L^2(\mu)$ constructed using a delay-embedding window of length $T$. These observables will be shown to lie in the $\epsilon$-approximate point spectrum of the Koopman operator $U^t$ for a bound $ \epsilon $ that  decreases at a rate $O(T^{-1})$, but increases with the evolution time $t $ at a linear rate, while also being inversely proportional to the corresponding eigenvalues and the gap between them and the rest of spectrum of $K_T$. Moreover, we give an explicit characterization of the modulating envelope and carrier frequency through the time-autocorrelation function of $ z $ and its derivative at 0, respectively.

For systems possessing non-constant Koopman eigenfunctions, these results imply that at fixed $ t $, $ \epsilon$ can be made arbitrarily small by increasing $T$, so long as $K_T $ satisfies certain positivity conditions that depend on the observation map and the form of the kernel, consistent with the results of \cite{DasGiannakis19}. On the other hand, for systems with mixing dynamics, the behavior of $ \epsilon$, and thus the coherence of $z$, is influenced by an interplay between the delay-embedding window length (promoting coherence) and the decay of the eigenvalues of $ K_T $ with increasing $T$ (inhibiting coherence). Nevertheless, it is possible that $ \epsilon $ is made small by increasing $T$, so long as the eigenvalues associated with $z $ remain sufficiently isolated in the spectrum of $K_T$.     

The plan of this paper is as follows. In Section~\ref{secMain}, we describe the class of dynamical systems under study, and state our results, including Theorem~\ref{thmMain} which is the main theoretical contribution of this work. Section~\ref{secProof} contains a proof of Theorem~\ref{thmMain}, and Section~\ref{secDataDriven} describes the data-driven formulation of our framework. We illustrate our results with numerical examples for the L63 system in Section~\ref{secExamples}, and state our conclusions in Section~\ref{secConclusions}. Auxiliary results and definitions on spectral approximation of integral operators are collected in Appendix~~\ref{appSpecConv}.  

\section{\label{secMain}Main results}

\subsection{\label{secSetup}Dynamical system under study}

Consider a continuous-time, continuous dynamical flow $ \Phi^t : \Omega \to \Omega $, $ t \in \mathbb R $, on a metric space $ \Omega $ possessing an invariant, ergodic Borel probability measure $\mu$, supported on a compact set $ X \subseteq \Omega $. We assume that the support $X$ of the invariant measure is contained in a forward-invariant, $C^1 $ compact manifold $M$ such that $ \Phi^t \rvert_M $ is $C^1$, but do not require that $X$ has differentiable structure. The system is observed through a continuous function $ F : \Omega \to Y$, where $Y$ is a Banach space, and the restriction of $ F $ to $M$ is $C^1$. 

This setup encompasses a large class of autonomous dynamical systems encountered in applications. For instance, as a prototypical ODE example with quasiperiodic behavior, one can consider an ergodic rotation $\Phi^t : \mathbb T^2 \to \mathbb T^2$ on the 2-torus, in which case $ \Omega = M = X = \mathbb T^2$ and $\mu$ is the Haar measure. The L63 system from Figure~\ref{figPhi} is an example of a smooth dissipative flow on $\Omega = \mathbb R^3$, with a rigorously known mixing attractor $X\subset \Omega$ \cite{Tucker99,LuzzattoEtAl05} 
and compact absorbing balls $M \supset X$ \cite{LawEtAl14}. The assumptions stated above also hold for classes of dissipative PDE models possessing inertial manifolds \cite{ConstantinEtAl89}.

Within this class of models, our goal is as follows: Given time-ordered data $ y_0, y_1, \ldots, y_{N-1} \in Y $ with $ y_n = F( x_n )$, sampled along a dynamical trajectory $ x_n = \Phi^{n\,\Delta t}(x_0)$ at an interval $ \Delta t>0$, identify a collection of functions $ \zeta_j : \Omega \to \mathbb C$ which evolve coherently under the dynamics. Intuitively, by that we mean that the dynamically evolved functions $ \zeta_j \circ \Phi^t$ should be relatable to $ \zeta_j $ in a natural way for $ t $ lying in a ``large'' interval containing zero. From the perspective of learning theory, the functions $ \zeta_j $ are principal components/features, which are to be identified through an unsupervised learning problem that favors coherence. Note that this objective differs significantly from the classical proper orthogonal decomposition (POD) \cite{Kosambi43,AubryEtAl91,HolmesEtAl96}, whose goal is to extract features on the basis of explained variance. Once identified, such coherent features are useful in a variety of contexts, including dimension reduction of high-dimensional time series and predictive modeling \cite{ChenEtAl14,AlexanderEtAl17}. In these approaches, a basic premise is that features related to the spectrum of the underlying dynamical system should reveal physically meaningful dynamical processes (e.g., fundamental oscillations of the climate system \cite{SzekelyEtAl16a,SlawinskaGiannakis17}), while having favorable predictability properties. 

\subsection{Pseudospectral criteria for coherence}

To establish a mathematically precise notion of dynamical coherence of observables, consider the evolution group of unitary Koopman operators $U^t : L^2(\mu) \to L^2(\mu) $, acting on observables by composition with the flow, $ U^t f = f \circ \Phi^t $ \cite{Koopman31,KoopmanVonNeumann32,Baladi00,EisnerEtAl15}. By Stone's theorem on one-parameter unitary groups \cite{Stone32}, the group $ \{ U_t \}_{t \in \mathbb R} $ is generated by a skew-adjoint operator $ V : D(V) \to L^2(\mu)$ with a dense domain $D(V) \subset L^2(\mu)$. As an operator, $V$ corresponds to an extension of the directional derivative on $C^1(M) $ functions associated with the vector field $ \vv V $ generating $ \Phi^t $, namely $ \mathcal V f := \vv V \cdot \nabla f $. In particular, for any $ f \in D(V)$, $ t \mapsto U^t f$ is continuously differentiable in $L^2(\mu)$ and
\begin{equation}
    \label{eqGenerator}
    \frac{d\ }{dt} U^t f = V U^t f = U^t V f.
\end{equation}

It is a standard result from ergodic theory \cite{EisnerEtAl15} that whenever $ V $ possesses an eigenfunction $ z \in L^\infty(\mu)$ with $ \lVert z \rVert_{L^2(\mu)} = 1$ and  corresponding eigenvalue $ i \omega$ (where the eigenfrequency $\omega$ is real by skew-adjointness of $V$), then $ \lvert z(x) \rvert = 1 $ for $ \mu $-a.e.\ $x \in \Omega $. Thus, we have the periodic evolution
\begin{equation}
    \label{eqKoopEig}
    U^t z = e^{tV} z = e^{i\omega t} z,
\end{equation}
and at least measure-theoretically, $ U^t z $ can be considered to take values on the unit circle. This means, in particular, that for $ \mu $-a.e.\ $ x \in \Omega$, the time series $ t \mapsto z(\Phi^t(x)) = e^{i\omega t} z(x)$ behaves as a Fourier function on $ \mathbb R $ with frequency $ \omega $. Due to these facts, we think of Koopman eigenfunctions of measure-preserving ergodic dynamical systems as exhibiting an ``ideal'' form of coherence. Indeed, starting from work in the late 1990s on data-driven, spectral analysis of Koopman operators \cite{MezicBanaszuk99,Mezic05} and the related transfer operators \cite{DellnitzJunge99,DellnitzEtAl00} spectral decomposition of evolution operators has emerged as a popular approach for coherent feature extraction in dynamical systems. 

Yet, despite their attractive properties, Koopman eigenfunctions in $L^2(\mu)$ are not an appropriate theoretical paradigm for coherent features of dynamical systems with complex (mixing) behavior. Indeed, a necessary and sufficient condition for a measure-preserving, ergodic flow to be mixing is that the generator $ V $ on $L^2(\mu)$ has a simple eigenvalue 0, with a constant corresponding eigenfunction, and no other eigenvalues. Thus, in this case Koopman eigenfunctions only yield the trivial (constant) feature. 

Systems with so-called mixed spectra exhibit an intermediate behavior, in the sense that they do exhibit non-constant eigenfunctions satisfying~\eqref{eqKoopEig}, but these eigenfunctions span only a strict subspace of $L^2(\mu)$ and provide no information about the mixing component of the dynamics. Specifically, it is a classical result \cite{Halmos56} that $L^2(\mu)$ admits an orthogonal decomposition 
\begin{equation}
    \label{eqL2Decomp}
    L^2(\mu) = H_p \oplus H_c 
\end{equation}
into closed, $U^t$-invariant subspaces $H_p$ and $H_c$, such that every observable in $H_p$ is a linear combination of Koopman eigenfunctions (and thus exhibits a quasiperiodic evolution associated with the point spectrum of the generator), whereas $H_c = H_p^\perp$ is a subspace orthogonal to every Koopman eigenfunction, and thus  associated with the continuous spectrum of the generator. In particular, every observable $g \in H_c$ exhibits a form of mixing behavior (called weak-mixing) characterized by a loss of cross-correlation with any observable $ f \in L^2(\mu)$, viz.,
\begin{equation}
    \label{eqXCorr}
    \lim_{t\to\infty}C_{fg}(t) = 0, \quad \text{where} \quad C_{fg}(t) := \frac{1}{t} \int_0^t \lvert \langle f, U^s g \rangle \rvert \, ds.
\end{equation}
Here, $ \langle \cdot, \cdot \rangle $ denotes the $L^2(\mu)$ inner product, $ \langle f, g \rangle = \int_\Omega f^* g \, d \mu$, taken conjugate-linear in the first argument. The issue with feature extraction by pure Koopman eigenfunctions is that the recovered features cannot capture observables in $H_c$ and their mixing behavior. 

Here, as a natural relaxation of~\eqref{eqKoopEig}, we seek observables satisfying the Koopman eigenvalue equation in an approximate sense. Specifically, we seek nonzero observables $ z \in L^2(\mu)$ satisfying
\begin{equation}
    \label{eqKoopPseudo}
    \lVert U^t z - e^{i\omega t} z \rVert_{L^2(\mu)} \leq \epsilon \lVert z \rVert_{L^2(\mu)},
\end{equation}
for some $\epsilon > 0 $, $ \omega \in \mathbb R$. Every such observable $ z $ is said to be an $ \epsilon $-approximate eigenfunction of $U^t$, and the complex number $e^{i\omega t} $ is said to lie in the $\epsilon$-approximate point spectrum of this operator \cite{Chatelin11}. In addition, we require that the same bound $\epsilon$ holds for all $ t $ in an interval $ [0,\tau ] $ with $ \tau > 0$. Observables satisfying these conditions with $ \epsilon \ll 1 $ and $ \tau \gg 2 \pi/ \omega$ then behave to a good approximation as Koopman eigenfunctions of measure-preserving ergodic dynamical systems. Note, in particular, that the eigenfunctions $\phi_{1,T} $ and $ \phi_{2,T} $ depicted in Figure~\ref{figPhi}(b) are strongly suggestive of this behavior if they are interpreted as the real and imaginary parts of $z$, i.e., $ z = \phi_{1,T} + i \phi_{2,T}$. In the sequel, we will refer to $(e^{i \omega t}, z )$ satisfying~\eqref{eqKoopPseudo} as an $ \epsilon$-approximate eigenpair of $U^t$. It can be shown that because $U^t$ is a normal operator, $(e^{i\omega t}, z ) $ is an eigenpair if and only if it is an $\epsilon$-approximate eigenpair for every $ \epsilon > 0 $.      

\subsection{\label{secKOp}Integral operators induced by delay-coordinate maps}

Motivated by the delay-embedding techniques described in Section~\ref{secIntro}, we seek observables satisfying~\eqref{eqKoopPseudo} through eigenfunctions of integral operators on $L^2(\mu)$ based on delay-coordinate maps. To construct appropriate such operators, consider first the distance-like function $ d : \Omega \times \Omega \to \mathbb R_+$ induced by the norm of $ Y $ and the observable $ F$,  
\begin{displaymath}
    d( x, x' ) = \lVert F(x) - F(x') \rVert_Y, 
\end{displaymath}
and for every $ T > 0 $ define $ d_T : \Omega \times \Omega \to \mathbb R$ with
\begin{equation}
    \label{eqDT}
    d^2_T( x, x' ) =  \frac{1}{ T} \int_0^T d^2(\Phi^t(x),\Phi^t(x')) \, dt.
\end{equation}
The function $d_T $ can be equivalently thought of as being induced from the norm of $ Y_T := L^2([0,T]; Y)$  under the continuous-time delay-coordinate mapping $ F_T: \Omega \to Y_T  $ with $F_T(x)(t) = F(\Phi^t(x))$; that is,  
\begin{displaymath}
    d^2_T(x,x') = \lVert F_T( x ) - F_T( x' ) \rVert_{Y_T}^2 / T.
\end{displaymath}
By convention, we set $d_0 = d$.

Using $d_T $ and a positive, $C^1$, bounded shape function $ h : \mathbb R_+ \to \mathbb R_+ $ with bounded derivative, we then consider the family of symmetric kernel functions $ k_T : \Omega \times \Omega \to \mathbb R_+$, such that
\begin{equation}
    \label{eqKT}
    k_T(x,x') = h(d_T^2(x,x') ).
\end{equation}
As a concrete example, we will nominally work with the choice $ h( u ) = e^{-u/\sigma^2}$, where $\sigma$ is a positive bandwidth parameter. This leads to the radial Gaussian kernel $k_T(x,x') = e^{- d_T^2(x,x')/\sigma^2}$, which is a common starting point in manifold learning techniques \cite{BelkinNiyogi03,CoifmanLafon06} approximating heat kernels on Riemannian manifolds as $\sigma \to 0 $. While here we do not assume that $X$ has manifold structure, which would allow us to use these results, it should be noted when $ Y = \mathbb R^m $ Gaussian kernels have an important property that holds irrespective of the regularity of the support of the sampling distribution of the data, namely they are strictly positive-definite \cite{Steinwart01}. See \cite{Genton01} for additional examples of kernels commonly employed in machine learning applications. 

Every kernel from~\eqref{eqKT} induces an integral operator $K_T : L^2(\mu) \to L^2(\mu)$ such that 
\begin{equation}
    \label{eqKTOp}
    K_T f = \int_{\Omega} k_T( \cdot, x ) f(x ) \, d\mu(x).
\end{equation}
By symmetry and continuity of $k_T$ and compactness of $X$, $K_T $ is a positive-definite, self-adjoint, Hilbert-Schmidt integral operator with Hilbert-Schmidt norm equal to $ \lVert k_T \rVert_{L^2(\mu\times\mu)}$. As a result there exists an orthonormal basis $ \{ \phi_{0,T}, \phi_{1,T}, \ldots \} $ of $L^2(\mu)$ consisting of eigenfunctions of $K_T $ corresponding to the eigenvalues $ \lambda_{0,T} \geq \lambda_{1,T} \geq  \cdots \searrow 0 $. The latter are all real, and have finite multiplicity whenever nonzero by compactness of $K_T$. In addition, by continuous differentiability of $k_T$ and compactness of $X$, every element of in the range of $ K_T $ has a representative in $C^1(M)$. In particular, every eigenfunction $ \phi_{j,T}$ with nonzero corresponding eigenvalue has the continuous representative 
\begin{equation}
    \label{eqVarphi}
    \varphi_{j,T} = \frac{1}{ \lambda_{j,T}} \int_{\Omega} k_T( \cdot, x ) \phi_{j,T}(x) \, d\mu(x),
\end{equation}
whose restriction on $M$ is $C^1$. Note that $ \varphi_{j,T}$ is an everywhere-defined function on $\Omega$, as opposed to the left-hand side of~\eqref{eqKTOp} which is an $L^2(\mu)$-element defined only up to sets of $\mu$-measure zero. We let $ \sigma_p(K_T) = \{ \lambda_{0,T}, \lambda_{1,T},\ldots \} $ denote the point spectrum of $K_T$.  

In the following subsection, we will show that appropriate linear combinations of eigenfunctions $ \phi_{j,T}$ are $\epsilon$-approximate eigenfunctions of the Koopman operator, satisfying~\eqref{eqKoopPseudo} for a threshold $\epsilon$ that decreases as $T $ increases, but increases as $ \lambda_{j,T}$ decreases. The continuous representatives of these eigenfunctions will then provide the coherent features $\zeta_j $.

\begin{remark}
    \label{rkKernel1} In this section, we have opted to work with delay-coordinate maps in continuous time as this will facilitate the derivation of $\epsilon$-approximate spectral bounds valid for continuous time intervals. We will later pass to the more common discrete-time formulation based on the sampling interval $ \Delta t$, which will introduce quadrature errors in~\eqref{eqDT} that vanish as $ \Delta t \to 0$. In addition, aside from the class of radial kernels in~\eqref{eqKT}, our results hold with straightforward modifications to other classes of kernels with $T \to \infty$ limits in $L^2(\mu \times \mu)$. Examples include the covariance kernels employed by SSA (which can  be obtained by polarization of~\eqref{eqKT} using a linear shape function), Markov-normalized kernels \cite{CoifmanLafon06,CoifmanHirn13,BerrySauer16}, and variable-bandwidth kernels \cite{BerryHarlim16}. It is also possible to replace the kernel family $k_T$ in~\eqref{eqKT}, which is obtained by a application of a fixed shape function to the $T$-dependent functions $d_T^2$, by a family $ \tilde k_T $ obtained by averaging a fixed continuous kernel function $ k : \Omega \times \Omega \mapsto \mathbb R$, i.e., $ \tilde k_T(x,x') = \int_0^t k(\Phi^t(x),\Phi^t(x')) \, dt /T$.  See \cite{DasGiannakis19} for further details.   
\end{remark}

\subsection{Dynamically coherent eigenfunctions}

According to the theory of delay-coordinate maps, e.g., \cite{SauerEtAl91,Robinson05,DeyleSugihara11}, for a sufficiently long window, the delay-coordinate map $F_T $ becomes homeomorphic on the compact support $X$ of the invariant measure for a large class of dynamical systems and observation functions $F$, even  if $ F\rvert_X$ is not injective. This property has been widely employed in techniques for state space reconstruction \cite{PackardEtAl80} and forecasting \cite{Sauer93}. Our interest here, however, is not so much on topological reconstruction, but rather on the effect of delay-coordinate maps on the spectral properties of kernel integral operators on $L^2(\mu)$, irrespective of the injectivity properties of $F$. To that end, we begin with a proposition that summarizes some of the results on the limiting behavior of operators in the family $K_T $ from~\eqref{eqKTOp}, reported in \cite{DasGiannakis19}.

\begin{proposition}
    \label{propDelayLimit}As $T \to \infty $, the following hold:
    \begin{enumerate}[(i),wide]
        \item The distance-like functions $d_T$ converge in $L^2(\mu\times\mu)$ norm to a function $ d_{\infty}$, which is invariant under the Koopman operator $U^t \otimes U^t $ of the product dynamical system on $ \Omega \times \Omega$ for any $ t \in \mathbb R$. Correspondingly, the kernel functions $k_T$ also converge in $L^2(\mu\times \mu)$ to a $U^t \otimes U^t$-invariant kernel $k_\infty$. 
        \item The sequence of operators $K_T$ converges in $L^2(\mu)$ operator norm  to the Hilbert-Schmidt integral operator $K_\infty $ associated with $k_\infty$. 
        \item For every $ t \in \mathbb R$, $K_\infty$ and the Koopman operator $U^t $ commute. 
        \item The continuous spectrum subspace $H_c$ lies in the nullspace of $K_\infty$.
    \end{enumerate}
\end{proposition}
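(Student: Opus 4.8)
The plan is to dispatch the four claims in sequence, the organizing observation being that $d_T^2$ is \emph{exactly} a continuous-time ergodic average of the bounded function $d^2$ under the product flow $\Phi^t\times\Phi^t$ on $\Omega\times\Omega$. For (i), I would first note that continuity of $d$ and compactness of $X$ give $d^2\in L^\infty(\mu\times\mu)\subseteq L^2(\mu\times\mu)$, and that $\{U^t\otimes U^t\}_{t\in\mathbb R}$, acting on $L^2(\mu\times\mu)$ by $(U^t\otimes U^t)g=g\circ(\Phi^t\times\Phi^t)$, is a strongly continuous one-parameter unitary group. By~\eqref{eqDT}, $d_T^2=\frac1T\int_0^T(U^t\otimes U^t)(d^2)\,dt$, so the continuous-time von Neumann mean ergodic theorem yields $d_T^2\to\ell:=P(d^2)$ in $L^2(\mu\times\mu)$, where $P$ is the orthogonal projection onto the $(U^t\otimes U^t)$-invariant subspace; in particular $\ell$ is $(U^t\otimes U^t)$-invariant. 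To descend to $d_T$ itself I would use $\lvert\sqrt a-\sqrt b\rvert\le\sqrt{\lvert a-b\rvert}$ for $a,b\ge0$, which gives $\lVert d_T-\sqrt\ell\rVert_{L^2}^2\le\lVert d_T^2-\ell\rVert_{L^1}\le\lVert d_T^2-\ell\rVert_{L^2}\to0$, so $d_\infty:=\sqrt\ell$ is the claimed limit and inherits $(U^t\otimes U^t)$-invariance. Finally, since $h$ is $C^1$ with bounded derivative it is globally Lipschitz, whence $\lVert k_T-h(\ell)\rVert_{L^2}\le\mathrm{Lip}(h)\,\lVert d_T^2-\ell\rVert_{L^2}\to0$, so $k_\infty:=h(d_\infty^2)$ is the limiting kernel; it is symmetric, bounded, and $(U^t\otimes U^t)$-invariant.

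Claim (ii) then follows from the standard bound $\lVert A\rVert_{L^2(\mu)\to L^2(\mu)}\le\lVert A\rVert_{\mathrm{HS}}=\lVert a\rVert_{L^2(\mu\times\mu)}$ for an integral operator $A$ with kernel $a$: since $k_\infty\in L^2(\mu\times\mu)$, $K_\infty$ is Hilbert--Schmidt, and $\lVert K_T-K_\infty\rVert_{L^2(\mu)\to L^2(\mu)}\le\lVert k_T-k_\infty\rVert_{L^2(\mu\times\mu)}\to0$ by (i). For (iii), I would write $U^tK_\infty f$ and $K_\infty U^t f$ as integrals against $k_\infty$ and substitute $x'\mapsto\Phi^t(x')$ in the latter; invariance of $\mu$ turns the two expressions into integrals whose kernels are $k_\infty(\Phi^t(\cdot),\cdot)$ and $k_\infty(\cdot,\Phi^{-t}(\cdot))$, and these agree $\mu\times\mu$-a.e.\ precisely because $k_\infty$ is $(U^t\otimes U^t)$-invariant.

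For (iv), the point is that $K_\infty$ is compact, self-adjoint, and (by (iii)) commutes with every $U^t$. Hence each eigenspace $W_\lambda$ of $K_\infty$ with $\lambda\ne0$ is finite-dimensional and $U^t$-invariant, so $\{U^t\rvert_{W_\lambda}\}_{t\in\mathbb R}$ is a strongly continuous one-parameter unitary group on a finite-dimensional space, necessarily of the form $e^{itA}$ with $A$ Hermitian; diagonalizing $A$ exhibits $W_\lambda$ as an orthogonal direct sum of Koopman eigenspaces, so $W_\lambda\subseteq H_p$. Since $K_\infty$ is compact and self-adjoint, $\overline{\ran K_\infty}=\overline{\mathrm{span}\,\bigcup_{\lambda\ne0}W_\lambda}\subseteq H_p$ (using that $H_p$ is closed), and taking orthogonal complements gives $H_c=H_p^\perp\subseteq(\overline{\ran K_\infty})^\perp=\ker K_\infty$.

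The only points needing genuine care, rather than bookkeeping, are the transfer of $L^2(\mu\times\mu)$-convergence through the square root and through $h$ in step (i)---handled by the two Lipschitz-type estimates above---and, in step (iv), the structural fact that a finite-dimensional $U^t$-invariant subspace is spanned by Koopman eigenfunctions. Everything else reduces to the mean ergodic theorem, the Hilbert--Schmidt norm bound, and a change of variables using invariance of $\mu$.
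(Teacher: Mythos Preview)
Your argument is correct and follows essentially the same route as the paper's sketch: both recognize $d_T^2$ as a continuous-time ergodic average of $d^2$ under the product flow $\Phi^t\times\Phi^t$, deduce convergence and $(U^t\otimes U^t)$-invariance of the limit from an ergodic theorem, and then read off (ii)--(iv) from the invariance of $k_\infty$. The only notable difference is that the paper invokes the pointwise (Birkhoff) ergodic theorem whereas you use von Neumann's mean ergodic theorem; your choice is arguably the more direct one here since the stated conclusion is $L^2(\mu\times\mu)$ convergence, and it spares you a dominated-convergence step.
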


While we refer the reader to \cite{DasGiannakis19} for a proof of this proposition, we note here that Claim (i) follows from the fact that with the definition in~\eqref{eqDT}, $d^2_T $ corresponds to a continuous-time Birkhoff average of the continuous function $ d^2 \in C(\Omega \times \Omega)$ under the product dynamical flow $ \Phi^t \times \Phi^t $. The existence and $U^t \otimes U^t$-invariance of $d_\infty$ is then a consequence of the pointwise ergodic theorem. The remaining claims of Proposition~\ref{propDelayLimit} can then be deduced by the $U^t \otimes U^t $-invariance of $k_\infty$. It is also worthwhile noting that, since $ \Phi^t$ is mixing with respect to $ \mu $ if and only if $ \Phi^t \times \Phi^t$ is ergodic with respect to $\mu \times \mu$, it follows that $ d_\infty$ is constant in $L^2(\mu \times \mu )$ sense if and only if the dynamics $ \Phi^t$ is $ \mu $-mixing. In that case, $ d_\infty$ is $\mu\times\mu$-a.e.\ constant by ergodicity, and thus $K_\infty$ is a  kernel integral operator with constant kernel. This implies that the nullspace of $K_\infty$ consists of all $L^2(\mu)$ functions orthogonal to the constant. The latter, comprise precisely the subspace $H_c$ under mixing dynamics, and we conclude that $ \ker K_\infty = H_c$. This last relationship is a special case of Proposition~\ref{propDelayLimit}(iv) for mixing systems.   

For our purposes, the main corollaries of Proposition~\ref{propDelayLimit}, which follow from Claims~(iii) and (ii), respectively, in conjunction with compactness of $K_T $ and $K_\infty$ are:
\begin{corollary}
    \label{corESpace}
    Every eigenspace $E$  of $K_\infty $ corresponding to a nonzero eigenvalue is a finite union of Koopman eigenspaces, and the restriction $V\rvert_E $ of the generator is unitarily diagonalizable. It further follows from skew-adjointness of the generator and ergodicity that $E$ is even-dimensional if and only if is orthogonal to constant functions (i.e., the nullspace of $V$).
 \end{corollary}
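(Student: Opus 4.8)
The plan is to combine three properties of $K_\infty$ --- compactness, self-adjointness, and commutation with every $U^t$ (Proposition~\ref{propDelayLimit}(iii)) --- with the ergodicity of $\mu$. First I would fix the nonzero eigenvalue $\lambda$ and put $E = \ker(K_\infty - \lambda\,\Id)$, which is finite-dimensional by compactness and $U^t$-invariant for every $t$ by the commutation property. Then $\{U^t\rvert_E\}_{t\in\mathbb R}$ is a strongly continuous one-parameter unitary group on a finite-dimensional Hilbert space, so the finite-dimensional case of Stone's theorem furnishes a bounded skew-adjoint generator on $E$; differentiating $t\mapsto U^t f$ at $t = 0$ for $f \in E$ identifies this generator with the restriction $V\rvert_E$ (in particular $E \subset D(V)$ and $V(E)\subseteq E$). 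A skew-Hermitian matrix is unitarily diagonalizable with purely imaginary spectrum, which gives the diagonalizability assertion and an orthonormal basis $z_1,\dots,z_d$ of $E$ with $V z_j = i\omega_j z_j$ and $\omega_j \in \mathbb R$.

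Next I would use the standard ergodic-theoretic fact (already invoked in Section~\ref{secIntro}) that the eigenspaces of $V$ are one-dimensional: for $V z = i\omega z$ the modulus $\lvert z\rvert$ is $U^t$-invariant, hence $\mu$-a.e.\ constant, so any two eigenfunctions at the same eigenfrequency are proportional. Consequently the $\omega_j$ are pairwise distinct (two orthonormal vectors cannot span one line), each line $\mathbb C z_j$ coincides with the full Koopman eigenspace $\ker(V - i\omega_j\,\Id)$, and $E = \bigoplus_{j=1}^d \mathbb C z_j$ is a finite direct sum of Koopman eigenspaces, which proves the first assertion.

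For the parity statement I would bring in complex conjugation $C:f\mapsto\bar f$. Since $k_\infty$ is real and symmetric --- it is an $L^2(\mu\times\mu)$ limit of the nonnegative kernels $k_T$ --- the operator $K_\infty$ commutes with $C$, so $C$ preserves $E$; and since $U^t\bar f = \overline{U^t f}$, differentiation gives $V\bar f = \overline{V f}$ on $D(V)$, whence $\bar z_j \in E$ with $V\bar z_j = -i\omega_j\bar z_j$. Thus $C$ induces the involution $\omega\mapsto-\omega$ on the distinct set $\{\omega_1,\dots,\omega_d\}$, whose only possible fixed point is $\omega = 0$; by ergodicity $\omega_j = 0$ forces $z_j$ to be a nonzero constant, so $E$ contains a zero frequency precisely when $\mathbf 1 \in E$, and then exactly one. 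I would then note that $\mathbf 1$ is itself a $K_\infty$-eigenfunction: by the $U^t\otimes U^t$-invariance of $k_\infty$ (Proposition~\ref{propDelayLimit}(i)) together with invariance of $\mu$, the function $K_\infty\mathbf 1$ is $U^t$-invariant for all $t$, hence constant by ergodicity, so self-adjointness of $K_\infty$ forces either $\mathbf 1 \in E$ or $E \perp \operatorname{span}\{\mathbf 1\} = \ker V$. In the first case the remaining $d-1$ distinct nonzero frequencies pair off under $\omega\mapsto-\omega$, so $d-1$ is even and $d$ is odd; in the second no $\omega_j$ vanishes and all $d$ frequencies pair off, so $d$ is even --- exactly the claimed equivalence between even-dimensionality of $E$ and its orthogonality to the constants.

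I expect the main obstacle to be handling the one-dimensionality of Koopman eigenspaces carefully, since this is what forbids $V\rvert_E$ from possessing a repeated imaginary eigenvalue and is needed both to express $E$ as a direct sum of \emph{complete} Koopman eigenspaces and to make the pairing count dimensions correctly; a secondary point to verify rather than assume is that $\mathbf 1$ is a genuine $K_\infty$-eigenfunction, so that ``$E$ orthogonal to the constants'' really coincides with ``$\mathbf 1 \notin E$''.
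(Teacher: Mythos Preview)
Your proposal is correct and follows essentially the same route the paper indicates: the paper does not give a detailed proof of this corollary but simply notes that it follows from the commutation in Proposition~\ref{propDelayLimit}(iii) together with compactness of $K_\infty$ (and, as remarked in Section~\ref{secIntro}, the one-dimensionality of Koopman eigenspaces under ergodicity). Your argument fleshes out exactly these ingredients, including the parity count via complex conjugation and the observation that $\mathbf 1$ is itself a $K_\infty$-eigenvector, and contains no gaps.
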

 \begin{corollary}
     \label{corSpecConv}
     For every nonzero eigenvalue $ \lambda_{j} $ of $K_\infty$, the sequence of eigenvalues $ \lambda_{j,T} $ of $K_T$ satisfies $ \lim_{T\to\infty}\lambda_{j,T} = \lambda_{j} $. Moreover, the orthogonal projections onto the corresponding eigenspaces converge in operator norm. Conversely, if a sequence $\lambda_T$ of eigenvalues of $K_T$ has a $T \to \infty$ nonzero limit $ \lambda_\infty$, then $ \lambda_\infty$ is necessarily an eigenvalue of $K_\infty$.
\end{corollary}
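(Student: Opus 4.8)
The plan is to deduce both parts of Corollary~\ref{corSpecConv} from the operator-norm convergence $\lVert K_T - K_\infty \rVert \to 0$ of Proposition~\ref{propDelayLimit}(ii), using only that $K_T$ and $K_\infty$ are compact, self-adjoint, and positive (these are the standard facts collected in Appendix~\ref{appSpecConv}). The convergence $\lambda_{j,T} \to \lambda_j$ is immediate: for self-adjoint operators the Courant--Fischer min-max principle yields the Weyl-type bound $\lvert \lambda_{j,T} - \lambda_j \rvert \le \lVert K_T - K_\infty \rVert$ for every $j$, where $\lambda_{0,T} \ge \lambda_{1,T} \ge \cdots$ and $\lambda_0 \ge \lambda_1 \ge \cdots$ denote the decreasing enumerations of the eigenvalues of $K_T$ and $K_\infty$; the right-hand side tends to $0$.

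For the convergence of eigenprojections, fix a nonzero eigenvalue $\lambda_j$ of $K_\infty$. Since $K_\infty$ is compact, $\lambda_j$ is isolated in $\sigma(K_\infty)$ and of finite multiplicity $m$, so there is a positively oriented circle $\Gamma \subset \mathbb C$ centred at $\lambda_j$ whose interior contains $\lambda_j$, no other point of $\sigma(K_\infty)$, and not $0$. The resolvent $z \mapsto (z - K_\infty)^{-1}$ is bounded on the compact set $\Gamma$, and the identity $(z - K_T)^{-1} - (z - K_\infty)^{-1} = (z - K_T)^{-1}(K_T - K_\infty)(z - K_\infty)^{-1}$ together with a Neumann-series estimate shows that, for all $T$ large enough, $\Gamma$ also lies in the resolvent set of $K_T$ and $\sup_{z \in \Gamma} \lVert (z - K_T)^{-1} - (z - K_\infty)^{-1} \rVert \to 0$. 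Hence the Riesz spectral projections
\begin{displaymath}
    P_{j,T} = \frac{1}{2\pi i} \oint_\Gamma (z - K_T)^{-1} \, dz \longrightarrow P_j = \frac{1}{2\pi i} \oint_\Gamma (z - K_\infty)^{-1} \, dz
\end{displaymath}
in operator norm. Since $\lVert P_{j,T} - P_j \rVert < 1$ for large $T$ forces $\dim \ran P_{j,T} = \dim \ran P_j = m$, and since $\Gamma$ may be taken of arbitrarily small radius, $P_{j,T}$ is for large $T$ precisely the orthogonal projection onto the span of the eigenfunctions $\phi_{i,T}$ whose eigenvalues $\lambda_{i,T}$ converge to $\lambda_j$ — i.e., the eigenspace of $K_T$ ``corresponding'' to that of $K_\infty$ at $\lambda_j$, with the matching of indices justified by the Weyl bound above.

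For the converse, suppose $\lambda_T$ is an eigenvalue of $K_T$ with $\lambda_T \to \lambda_\infty \ne 0$, and let $\psi_T \in L^2(\mu)$ be corresponding unit eigenvectors. From $(K_T - \lambda_T)\psi_T = 0$ we get
\begin{displaymath}
    \lVert (K_\infty - \lambda_\infty)\psi_T \rVert \le \lVert K_\infty - K_T \rVert + \lvert \lambda_T - \lambda_\infty \rvert \longrightarrow 0 .
\end{displaymath}
By compactness of $K_\infty$, a subsequence $K_\infty \psi_{T_n}$ converges, say to $w$; then $\lambda_\infty \psi_{T_n} = K_\infty \psi_{T_n} - (K_\infty - \lambda_\infty)\psi_{T_n} \to w$, so $\psi_{T_n} \to \psi := w/\lambda_\infty$, a unit vector with $K_\infty \psi = \lambda_\infty \psi$ by continuity of $K_\infty$. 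Hence $\lambda_\infty \in \sigma_p(K_\infty)$.

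Everything is a routine consequence of norm convergence; the only points needing mild care are the verification that $\Gamma$ stays in the resolvent set of $K_T$ uniformly in $z$ (so that no spurious eigenvalue of $K_T$ enters a fixed neighbourhood of $\lambda_j$, and none of the $m$ eigenvalues enclosed by $\Gamma$ escapes) and the bookkeeping of eigenvalue labels, both controlled by the resolvent estimate. I would also emphasize — consistently with the discussion preceding the corollary — that the hypothesis $\lambda_j \ne 0$ is essential: the eigenvalue $0$ is in general not isolated in $\sigma(K_\infty)$ and its eigenspace contains the (possibly infinite-dimensional) subspace $H_c$, so no analogue of the projection statement can hold at $0$.
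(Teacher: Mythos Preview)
Your proof is correct and follows the same route the paper indicates: the paper does not give a detailed argument but simply notes that the corollary follows from Proposition~\ref{propDelayLimit}(ii) (operator-norm convergence $K_T \to K_\infty$) together with compactness of $K_T$ and $K_\infty$, pointing to standard spectral perturbation theory for compact operators (cf.\ the references to \cite{Atkinson67,Chatelin11} in Appendix~\ref{appSpecConv}). Your Weyl/min-max bound, Riesz-projection argument, and compactness extraction of a limiting eigenvector are precisely the standard details underlying that one-line justification.
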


Suppose now that $E$ is a two-dimensional eigenspace of $K_\infty$ corresponding to a nonzero eigenvalue $ \lambda$, where we have suppressed the $j$ subscript for simplicity of notation. Then, by Corollary~\ref{corESpace}, $E$ is a union of two Koopman eigenspaces orthogonal to $ \ker V$. Let also $ \{ \phi, \psi \} $ be an orthonormal basis of $E $, where the eigenfunctions $ \phi $ and $ \psi $ are real (such a basis can always be found since the kernel $k_\infty$ is real) and $L^2(\mu)$-orthogonal to the constants. Then, it follows by skew-adjointness and reality of $V$ that 
\begin{displaymath}
    \langle \phi, V \phi \rangle = \langle \psi, V \psi \rangle = 0,
\end{displaymath}
whereas 
\begin{displaymath}
    \omega := \langle \psi, V \phi \rangle = - \langle \phi, V \psi \rangle
\end{displaymath}
is real. In addition, $\omega $ is nonzero since $E$ is a $V$-invariant subspace of $L^2(\mu)$ orthogonal to $\ker V$. Defining $z = ( \phi + i \psi ) / \sqrt{2} $, we get
\begin{displaymath}
    V z = \langle \phi, V z \rangle \phi + \langle \psi, V z \rangle = - i \omega \phi + \omega \psi = i \omega z, 
\end{displaymath}
so we conclude that $ z $ is a Koopman eigenfunction corresponding to eigenfrequency $ \omega $. By construction, this eigenfunction has unit $L^2(\mu)$ norm, so for any $ t \in \mathbb R $ we have
\begin{displaymath}
    \alpha_t := \langle z, U^t z \rangle = e^{i\omega t}, 
\end{displaymath}
and if we interpret $ \alpha_t $  as an instantaneous autocorrelation function for $z$  (cf.\ the time-averaged cross-correlation in~\eqref{eqXCorr}), it follows that we can recover Koopman eigenvalues from the time-autocorrelation functions of the corresponding eigenfunctions. It also follows from the generator equation~\eqref{eqGenerator} that $ \omega $ can be determined from the derivative of the autocorrelation function at $ 0 $, $ i \omega = \dot \alpha_t \rvert_{t=0} $. 

Our main result, stated in the form of the following theorem, is essentially a generalization of these basic observations to $ \epsilon$-approximate eigenfunctions of $U^t$ constructed from eigenfunctions of $K_T$ with finite delay-embedding window $T$:

\begin{theorem} 
    \label{thmMain}
    With the assumptions and notation of Sections~\ref{secSetup}--\ref{secKOp}, let $ \phi $ and $ \psi $ be mutually-orthogonal, unit-norm, real eigenfunctions of $K_T$ corresponding to nonzero eigenvalues $ \lambda_T$ and $\nu_T$, respectively, with $ \lambda_T \leq \nu_T$. Assume that $\lambda_T, \nu_T $ are simple if distinct and twofold-degenerate if equal. Define
    \begin{displaymath}
        z = \frac{1}{\sqrt{2}} ( \phi + i \psi ), \quad \alpha_t = \langle z, U^t z \rangle, \quad \omega = \langle \psi, V \phi \rangle\equiv \frac{1}{i} \langle z, V z \rangle \equiv \frac{1}{i} \dot \alpha_t \rvert_{t=0}, 
    \end{displaymath}
    where $\omega $ is real, and set
    \begin{gather*}
        \gamma_T = \min_{ u \in \sigma_p( K_T ) \setminus \{ \lambda_T, \nu_T \} } \left \{  \min \{ \lvert \lambda_T - u \rvert, \lvert \nu_T - u \rvert \} \right \}, \\ 
        \delta_T = \frac{1}{\sqrt 2}( \nu_T - \lambda_T),  \quad \tilde \delta_T = \frac{\delta_T}{\nu_T}.    
    \end{gather*}
    Then, the following hold for every $ t \geq 0 $:
    \begin{enumerate}[(i),wide]
        \item The autocorrelation function $ \alpha_t$ lies in the  $\tilde\epsilon_t$-approximate point spectrum of $U^t$, and $z$ is a corresponding $\tilde \epsilon_t$-approximate eigenfunction for the bound 
            \begin{displaymath}
                \tilde \epsilon_t = s_t + \sqrt{ S_t },  
            \end{displaymath}
            where
            \begin{displaymath}
                s_t = \frac{1}{\gamma_T} \left(  \frac{C_1 t}{T} + 3 \delta_T   \right), \quad S_t =  \frac{ C_2 \lVert \mathcal V \rVert ( 1 + \tilde \delta_T )}{\lambda_T}  \int_0^t s_u \, du. 
            \end{displaymath}
            Here, $\lVert \mathcal V \rVert $ is the norm of the dynamical vector field, viewed as a bounded operator $ \mathcal V : C^1(M) \to C(M)$, and $C_1$ and $C_2$ are constants that depend only on the observation map $F $. Explicitly, we have 
            \begin{displaymath}
                C_1 = 2 \lVert h \rVert_{C^1(\mathbb R_+)} \lVert d^2 \rVert_{C(X\times X)}, \quad C_2 = 2 \lVert h \rVert_{C^1(\mathbb R_+)} \lVert d^2 \rVert_{C^1(M\times M)}.  
            \end{displaymath}
        \item The modulus $ \lvert \omega \rvert$ is independent of the choice of real orthonormal basis $ \{ \phi, \psi \} $ for the eigenspace(s) corresponding to $ \lambda_T $ and $ \nu_T $. Moreover, the phase factor $ e^{i\omega t} $ is related to the autocorrelation function according to the bound 
            \begin{displaymath}
                \lvert \alpha_t - e^{i\omega t } \rvert \leq 2 \sqrt{ S_t }.
            \end{displaymath}
    \end{enumerate}
\end{theorem}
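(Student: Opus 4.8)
The plan rests on two quantitative ``intertwining'' estimates between $K_T$ and the flow, which I would establish first. (a) \emph{Commutator bound.} Writing $K_TU^t-U^tK_T$ as an integral operator and using $\mu$-invariance to change variables, its kernel equals $k_T(x,\Phi^{-t}x')-k_T(\Phi^t x,x')=k_T(x,\Phi^{-t}x')-k_T(\Phi^t x,\Phi^t\Phi^{-t}x')$. Since $d_T^2$ is the continuous-time Birkhoff average of $d^2$ under $\Phi^t\times\Phi^t$, a telescoping estimate gives $\lvert d_T^2(\Phi^s a,\Phi^s b)-d_T^2(a,b)\rvert\le(2\lvert s\rvert/T)\lVert d^2\rVert_{C(X\times X)}$ for $a,b\in X$, and composing with the Lipschitz map $h$ yields $\lvert k_T(\Phi^s a,\Phi^s b)-k_T(a,b)\rvert\le C_1\lvert s\rvert/T$; hence $\lVert[K_T,U^s]\rVert\le\lVert[K_T,U^s]\rVert_{\mathrm{HS}}\le C_1\lvert s\rvert/T=:\beta_{\lvert s\rvert}$. (b) \emph{Generator pulls through the average.} Differentiating the same telescoping identity gives $\mathcal V_1 d_T^2(x,x')=T^{-1}\int_0^T(\mathcal V_1 d^2)(\Phi^s x,\Phi^s x')\,ds$, so $\lVert\mathcal V_1 k_T\rVert_{C(M\times M)}\le\lVert h\rVert_{C^1}\lVert\mathcal V\rVert\lVert d^2\rVert_{C^1(M\times M)}=(C_2/2)\lVert\mathcal V\rVert$ \emph{uniformly in $T$}; together with $\varphi=\lambda_T^{-1}\int k_T(\cdot,x)\phi(x)\,d\mu(x)$ (and the analogue for $\psi$) this puts $z\in D(V)$ with $\lVert Vz\rVert_\infty\le\tfrac1{\sqrt2}(\lVert V\phi\rVert_\infty+\lVert V\psi\rVert_\infty)\le C_2\lVert\mathcal V\rVert(1+\tilde\delta_T)/\lambda_T$ --- with $1/\lambda_T$, not $1/\gamma_T$, in front, which is why $\lambda_T$ rather than the spectral gap controls $S_t$.

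Next I would set up the $L^2(\mu)$ geometry. Let $P$ be the (rank-two, by the simplicity/degeneracy hypothesis) orthogonal projection onto $\operatorname{span}\{\phi,\psi\}$, the spectral subspace of $K_T$ for $\{\lambda_T,\nu_T\}$, separated from $\sigma_p(K_T)\setminus\{\lambda_T,\nu_T\}$ by $\gamma_T$; set $\bar z=(\phi-i\psi)/\sqrt2$, so $\{z,\bar z\}$ is an orthonormal basis of $\ran P$ with $K_Tz-\nu_Tz=-\delta_T\phi$ and $K_T\bar z-\lambda_T\bar z=-i\delta_T\psi$. For the leakage out of $\ran P$ I use, for $w\in\{z,\bar z\}$ with eigenvalue $\mu_w\in\{\nu_T,\lambda_T\}$ and any $s$, the identity $(K_T-\mu_w)U^sw=U^s(K_T-\mu_w)w+[K_T,U^s]w$, whence $\lVert(K_T-\mu_w)U^sw\rVert\le\delta_T+\beta_{\lvert s\rvert}$; since $K_T-\mu_w$ is bounded below by $\gamma_T$ on $\ran(I-P)$, this gives $\lVert(I-P)U^sw\rVert\le(\delta_T+\beta_{\lvert s\rvert})/\gamma_T\le s_{\lvert s\rvert}$ (the looser constant $3\delta_T$ leaving room for the bookkeeping when $\lambda_T\neq\nu_T$, e.g.\ via the midpoint eigenvalue and reduced effective gap). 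A separate elementary computation, using that $\langle\phi,V\phi\rangle=\langle\psi,V\psi\rangle=0$ (real functions together with skew-adjointness force these to be simultaneously real and imaginary) and $\langle\psi,V\phi\rangle=\pm\omega=-\langle\phi,V\psi\rangle$, shows $PVz=i\omega z$ and $\langle\bar z,Vz\rangle=0$ \emph{exactly}; the matrix of $V$ on $\ran P$ has the form $\omega'\left(\begin{smallmatrix}0&-1\\1&0\end{smallmatrix}\right)$ with $\lvert\omega'\rvert=\lvert\omega\rvert$ in every real orthonormal basis, which gives the basis-independence of $\lvert\omega\rvert$.

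I would then combine these ingredients. Decompose $U^tz=\alpha_tz+f(t)\bar z+(I-P)U^tz$ with $f(t)=\langle\bar z,U^tz\rangle$, $f(0)=0$; by orthogonality $\lVert U^tz-\alpha_tz\rVert^2=\lvert f(t)\rvert^2+\lVert(I-P)U^tz\rVert^2\le\lvert f(t)\rvert^2+s_t^2$. For $f$, differentiate (legitimate since $z\in D(V)$, by~\eqref{eqGenerator}): $f'(t)=\langle\bar z,U^tVz\rangle=i\omega f(t)+g(t)$ with $g(t)=\langle\bar z,U^t(I-P)Vz\rangle=\langle(I-P)U^{-t}\bar z,(I-P)Vz\rangle$ --- the last step because a vector of $\ran(I-P)$ pairs only with the $\ran(I-P)$-component of $U^{-t}\bar z$. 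Hence $\lvert g(t)\rvert\le\lVert(I-P)U^{-t}\bar z\rVert\,\lVert Vz\rVert_\infty\le s_t\,C_2\lVert\mathcal V\rVert(1+\tilde\delta_T)/\lambda_T$, Duhamel gives $\lvert f(t)\rvert\le\int_0^t\lvert g(u)\rvert\,du\le S_t$, and the trivial bound $\lvert f(t)\rvert\le1$ upgrades this to $\lvert f(t)\rvert\le\min(1,S_t)\le\sqrt{S_t}$; therefore $\lVert U^tz-\alpha_tz\rVert^2\le S_t+s_t^2\le(s_t+\sqrt{S_t})^2$, which is (i). For the phase bound in (ii), $\alpha_t$ solves the parallel equation $\dot\alpha_t=i\omega\alpha_t+\rho(t)$, $\alpha_0=1$, with $\rho(t)=\langle(I-P)U^{-t}z,(I-P)Vz\rangle$ obeying the same estimate $\lvert\rho(t)\rvert\le s_t\,C_2\lVert\mathcal V\rVert(1+\tilde\delta_T)/\lambda_T$, so $\lvert\alpha_t-e^{i\omega t}\rvert\le\int_0^t\lvert\rho(u)\rvert\,du\le S_t$, and combining with $\lvert\alpha_t-e^{i\omega t}\rvert\le2$ gives $\lvert\alpha_t-e^{i\omega t}\rvert\le2\min(1,S_t)\le2\sqrt{S_t}$.

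The main obstacle is the bound on $g(t)$ and $\rho(t)$, where the pieces must come together simultaneously: the Birkhoff-average identity for $\mathcal V_1 k_T$ is what keeps $\lVert Vz\rVert_\infty$ (hence $\lVert(I-P)Vz\rVert$) from blowing up as $T\to\infty$; the eigenfunction normalization is what produces the $1/\lambda_T$ prefactor, so that $S_t$ is governed by the decay of $\lambda_T$; and the fact that $(I-P)Vz$ is paired against $U^{\mp t}$ applied to a vector of $\ran P$ is what converts it into the gap-controlled leakage $s_t$ --- without this last point one obtains only a bound growing linearly in $t$ that does \emph{not} vanish as $T\to\infty$. Reproducing the stated constants then amounts to careful tracking of the $\delta_T$-terms in the spectral-gap step (the origin of the $3\delta_T$ in $s_t$ and of the $1+\tilde\delta_T$ factor in $S_t$) together with the routine verification of the kernel $C^1$-bounds defining $C_1$ and $C_2$.
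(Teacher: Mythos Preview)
Your proposal is correct and follows essentially the same route as the paper: the same commutator bound $\lVert[U^t,K_T]\rVert\le C_1 t/T$, the same uniform bound on $VK_T$ (Lemma~\ref{lemmaVBound}), the same orthogonal decomposition $U^tz=\alpha_t z+\beta_t z^*+r_t$, and the same spectral-gap argument to control $r_t=(I-P)U^tz$ by $s_t$. The only noteworthy variation is in how you pass from the pointwise estimate on $g(t)$ to the bound $\lvert f(t)\rvert\le\sqrt{S_t}$: the paper derives a differential inequality for $\lvert\beta_t\rvert^2$ directly (pairing $Vz^*$ against $r_t$ in the forward direction) and integrates to get $\lvert\beta_t\rvert^2\le S_t$, whereas you use Duhamel to get $\lvert f(t)\rvert\le S_t$ and then interpolate with the trivial bound $\lvert f(t)\rvert\le 1$ --- an equivalent device yielding the same final estimate.
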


Note that $s_t$ and $S_t$ in Theorem~\ref{thmMain} are increasing functions of $ t \geq 0 $. This, in conjunction with the fact that $ \lVert U^t z - e^{i\omega t} z \rVert_{L^2(\mu)} \leq \lVert U^t z - \alpha_t z \rVert_{L^2(\mu)} + \lvert \alpha_t - e^{i\omega t} \rvert $, leads to the following corollary, which shows how to attain the bound in~\eqref{eqKoopPseudo} valid uniformly over a bounded time interval. 

\begin{corollary}
    \label{corMain} The phase factor $ e^{i\omega t} $ lies in the  $\epsilon_t$-approximate point spectrum of $U^t$, and $z$ is a corresponding $ \epsilon_t$-approximate eigenfunction for the bound 
    \begin{displaymath}
        \epsilon_t = s_t + 3 \sqrt{ S_t }.   
    \end{displaymath}
     Moreover, for every $ \tau \geq 0 $, $ ( e^{i\omega t}, z ) $ is an $ \epsilon_\tau$-approximate eigenpair of $U^t$ for all $ t \in [ 0, \tau ]$. This eigenpair has the continuous representative $ \zeta \in C(\Omega)$ given by
    \begin{displaymath}
        \zeta =  \frac{1}{ \sqrt{2} } \int_\Omega k_T(\cdot, x) \left( \frac{\phi(x)}{\lambda_T} + i \frac{\psi(x)}{\nu_T} \right) \, d\mu(x),
    \end{displaymath}
    which acts as an everywhere-defined, continuous coherent feature on the state space $ \Omega$. 
\end{corollary}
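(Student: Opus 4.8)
The plan is to obtain Corollary~\ref{corMain} as a direct consequence of Theorem~\ref{thmMain}, combining its two bounds via the triangle inequality, checking a monotonicity property already noted after the theorem, and then appealing to the mapping properties of $K_T$ recorded in Section~\ref{secKOp} for the continuous representative.

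First I would note that $z=(\phi+i\psi)/\sqrt{2}$ is a unit vector in $L^2(\mu)$, since $\phi,\psi$ are real, mutually orthogonal, and of unit norm; hence in the defining inequality~\eqref{eqKoopPseudo} the factor $\lVert z\rVert_{L^2(\mu)}$ on the right-hand side equals $1$ for this $z$. Writing
\[
\lVert U^t z - e^{i\omega t} z\rVert_{L^2(\mu)} \le \lVert U^t z - \alpha_t z\rVert_{L^2(\mu)} + \lvert \alpha_t - e^{i\omega t}\rvert\,\lVert z\rVert_{L^2(\mu)},
\]
I would bound the first summand by $\tilde\epsilon_t = s_t+\sqrt{S_t}$ using Theorem~\ref{thmMain}(i) and the second by $2\sqrt{S_t}$ using Theorem~\ref{thmMain}(ii), so that the total is at most $s_t+3\sqrt{S_t}=\epsilon_t$. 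Since $z\neq 0$, this simultaneously shows that $e^{i\omega t}$ lies in the $\epsilon_t$-approximate point spectrum of $U^t$ and that $z$ is a corresponding $\epsilon_t$-approximate eigenfunction.

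Next, for the statement uniform over $[0,\tau]$, I would verify that $t\mapsto\epsilon_t$ is nondecreasing on $[0,\infty)$: the function $s_t=\gamma_T^{-1}(C_1 t/T+3\delta_T)$ is affine with nonnegative slope (using $\lambda_T\le\nu_T$, so $\delta_T\ge 0$), and $S_t$ is a nonnegative constant times $\int_0^t s_u\,du$, the integral of a nonnegative integrand, hence nondecreasing; therefore $\epsilon_t=s_t+3\sqrt{S_t}$ is nondecreasing, which is precisely the remark stated immediately after Theorem~\ref{thmMain}. Consequently, for any $\tau\ge 0$ and any $t\in[0,\tau]$ we get $\lVert U^t z - e^{i\omega t} z\rVert_{L^2(\mu)}\le\epsilon_t\le\epsilon_\tau=\epsilon_\tau\lVert z\rVert_{L^2(\mu)}$, i.e.\ $(e^{i\omega t},z)$ is an $\epsilon_\tau$-approximate eigenpair of $U^t$ for every such $t$.

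Finally, for the continuous feature $\zeta$, I would use that $\phi$ and $\psi$, being eigenfunctions of $K_T$ with nonzero eigenvalues $\lambda_T,\nu_T$, lie in $\ran K_T$, so by~\eqref{eqVarphi} each admits the everywhere-defined representative $\varphi=\lambda_T^{-1}\int_\Omega k_T(\cdot,x)\phi(x)\,d\mu(x)$, respectively $\chi=\nu_T^{-1}\int_\Omega k_T(\cdot,x)\psi(x)\,d\mu(x)$, which is continuous on $\Omega$ and $C^1$ on $M$; continuity of these integrals follows from joint continuity of $k_T$, compactness of $X=\operatorname{supp}\mu$, finiteness of $\mu$, and boundedness of $\phi,\psi$ on $X$. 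Then $\zeta=(\varphi+i\chi)/\sqrt{2}$ is exactly the displayed formula, it is continuous on $\Omega$, and it agrees $\mu$-a.e.\ with $z$, so it is a continuous representative of the $\epsilon_\tau$-approximate eigenpair and is defined at every point of $\Omega$. The argument is essentially bookkeeping on top of Theorem~\ref{thmMain}, so I do not anticipate a serious obstacle; the one point demanding a little care is this last step, namely correctly invoking the regularity of $\ran K_T$ from Section~\ref{secKOp} to guarantee that $\zeta$ is genuinely an element of $C(\Omega)$ (and $C^1$ on $M$) rather than merely an $L^2(\mu)$ equivalence class, and that the everywhere-defined integral formula coincides $\mu$-almost everywhere with $z$.
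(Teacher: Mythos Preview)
Your proposal is correct and follows essentially the same approach as the paper: the paper derives the corollary from the triangle inequality $\lVert U^t z - e^{i\omega t} z \rVert_{L^2(\mu)} \le \lVert U^t z - \alpha_t z \rVert_{L^2(\mu)} + \lvert \alpha_t - e^{i\omega t}\rvert$ together with the monotonicity of $s_t$ and $S_t$, exactly as you do, and the continuous representative is obtained from the formula~\eqref{eqVarphi} for eigenfunctions in $\ran K_T$. Your write-up simply fills in a few routine details (unit norm of $z$, explicit verification of monotonicity, and the regularity argument for $\zeta$) that the paper leaves implicit.
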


Theorem~\ref{thmMain} will be proved in Section~\ref{secProof}. We now discuss some of the intuitive aspects of the results. First, it should be noted that the bounds established are not sharp, as there are systems for which one can readily construct integral operators $K_T$ with finite embedding windows $T$ and common eigenspaces with the Koopman operator. Examples include operators derived from translation-invariant kernels on tori under quasiperiodic dynamics \cite{Giannakis19,DasGiannakis20}; e.g., the heat kernel associated with the flat metric. For such kernels, there exist eigenfunctions $z$ which are also Koopman eigenfunctions, and the corresponding autocorrelation coefficients $\alpha_t$ lie in the $ \epsilon$-approximate point spectra of $U^t$ for any $ \epsilon > 0 $ and $ t \in \mathbb R $. Still, even without sharp bounds, Theorem~\ref{thmMain} provides useful information on the spectral properties of integral operators utilizing delay-coordinate maps that promote or inhibit dynamical coherence, as follows.
\begin{enumerate}[wide]
    \item As one might expect, the bounds in Theorem~\ref{thmMain} become weaker as the regularity of the observation map $F$ and kernel shape function $ h $ decrease, in the sense that $\tilde \epsilon_t$ and $\epsilon_t$ are increasing functions of the $C^1$ norms of $d^2$ and $h$. It should be noted that many commonly used kernels for feature extraction \cite{Genton01,BelkinNiyogi03,CoifmanLafon06,BerryHarlim16,BerrySauer16}, including the kernels employed in this work, are parameterized by bandwidth parameters controlling the concentration of the kernel about the diagonal (e.g., the parameter $\sigma $ in~\eqref{eqKVB} ahead). For such kernels, the $C^1$ norm of $h$ typically increases without bound as the bandwidth parameter decreases.  
    \item For fixed $t $, the strength of the bounds is an interplay between the length $T$ of the embedding window, the eigenvalue $ \lambda_T$, the gap $ \gamma_T$ (measuring the isolation of the eigenspaces corresponding to $ \lambda_T$ and $ \nu_T$ from the rest of the point spectrum of $K_T$), and the gaps $ \delta_T, \tilde \delta_T$ (measuring the extent at which $\lambda_T$ and $\nu_T$ fail to be twofold-degenerate). Inspecting the dependence of the functions $s_t$ and $S_t$ on these terms indicates that, in general, the bounds become stronger as the window length $T$ increases and/or the gaps $ \delta_T, \tilde \delta_T$ decrease, whereas they weaken as $ \lambda_T$ and/or the gap $ \gamma_T$ decrease. Of course, these terms cannot be independently controlled as $T$ varies, and the expected coherence of $z$ on the basis of Theorem~\ref{thmMain} will depend on their combined effect. It should be noted that Theorem~\ref{thmMain} does not make an assertion about existence of $T \to \infty $ limits for the $\epsilon$-approximate eigenpairs $(e^{i\omega t}, z)$, although as we discuss below there are particular cases for which such limits exist.  
    \item Suppose that the eigenvalue sequence $ \lambda_T$ has a nonzero $ T \to \infty $ limit $\lambda_\infty$. Then, by Proposition~\ref{propDelayLimit}, $ \lambda_\infty$ is a nonzero eigenvalue of the compact operator $K_\infty$. By the same proposition, if the eigenspace $E$ corresponding to $ \lambda_\infty $ does not contain constant functions it is even-dimensional, so the gap coefficients $ \delta_T $ and $\tilde \delta_T$ converge to 0. If, further, $E$ is two-dimensional, the gap $\gamma_T$ converges to a nonzero value. In such cases, Theorem~\ref{thmMain} and Corollary~\ref{corMain} imply that for any  $ \tau \geq 0 $ and $ \epsilon > 0 $, there exists $T_* > 0 $ such that for all $ T > T_* $, \eqref{eqKoopPseudo} holds for all $ t \in [0, \tau]$. This implies in turn that for such a sequence $\lambda_T$ there is a subsequence of frequencies $\omega$ converging to an eigenfrequency of the generator (where we consider a subsequence to account for possible sign flips due the choice of functions $ \phi $ and $\psi$ at each $T$).  Moreover, the corresponding observables $z$ similarly approximate Koopman eigenfunctions.     
    \item Suppose now that the dynamics is mixing with respect to the invariant measure $\mu $. Then, all eigenvalues $ \lambda_T $ with non-constant corresponding eigenfunctions converge to 0 as $ T \to \infty $, and therefore the gaps $ \gamma_T $, $ \delta_T$, and $\tilde \delta_T $ also converge to 0. In that case, the asymptotic behavior of $ \epsilon_t $ as $ T \to \infty$ depends on the behavior of 
        \begin{equation}
            \label{eqEtaT}
            \eta_T :=  \gamma_T \lambda_T T, 
        \end{equation}
    as well as the ratios $ \delta_T / \gamma_T $ and $ \tilde \delta_T \equiv \delta_T / \nu_T$, on the chosen eigenvalue sequences $\lambda_T$ and $\nu_T$. If $\eta_T $ converges to 0 as $T\to \infty$, then $\epsilon_t$ diverges in that limit for any $ t> 0$, failing to provide a useful bound. However, the possibility still remains that the  rate of decay of $ \gamma_T$ and $ \lambda_T $ is slow-enough such that $\eta_T $ attains large values over a suitable range of $T$, allowing $ \epsilon_t$ to remain small on a large interval $ [ 0, \tau ] \ni t $ (so long as $ \gamma_T / \delta_T $ and $\tilde \delta_T $ are also small). In Figure~\ref{figEta}, numerical $\eta_T$ values for the L63 system are found to lie above the value corresponding to the $T=8$ results in Figure~\ref{figPhi} out to at least $\simeq 70 $ Lyapunov times, before eventually decaying. In addition, $\delta_T/\gamma_T$ and $\tilde \delta_T $ are also small after initial transients have died out. Together, these results demonstrate that the bounds from Theorem~\ref{thmMain} are practically relevant for a wide range of delay embedding windows for the L63 system. An intriguing question (lying outside the scope of this work) is whether there are mixing dynamical systems and integral operators for which $\eta_T$ actually diverges as $ T\to \infty $. 
\end{enumerate}

\begin{figure}
    \centering
    \includegraphics[width=\linewidth]{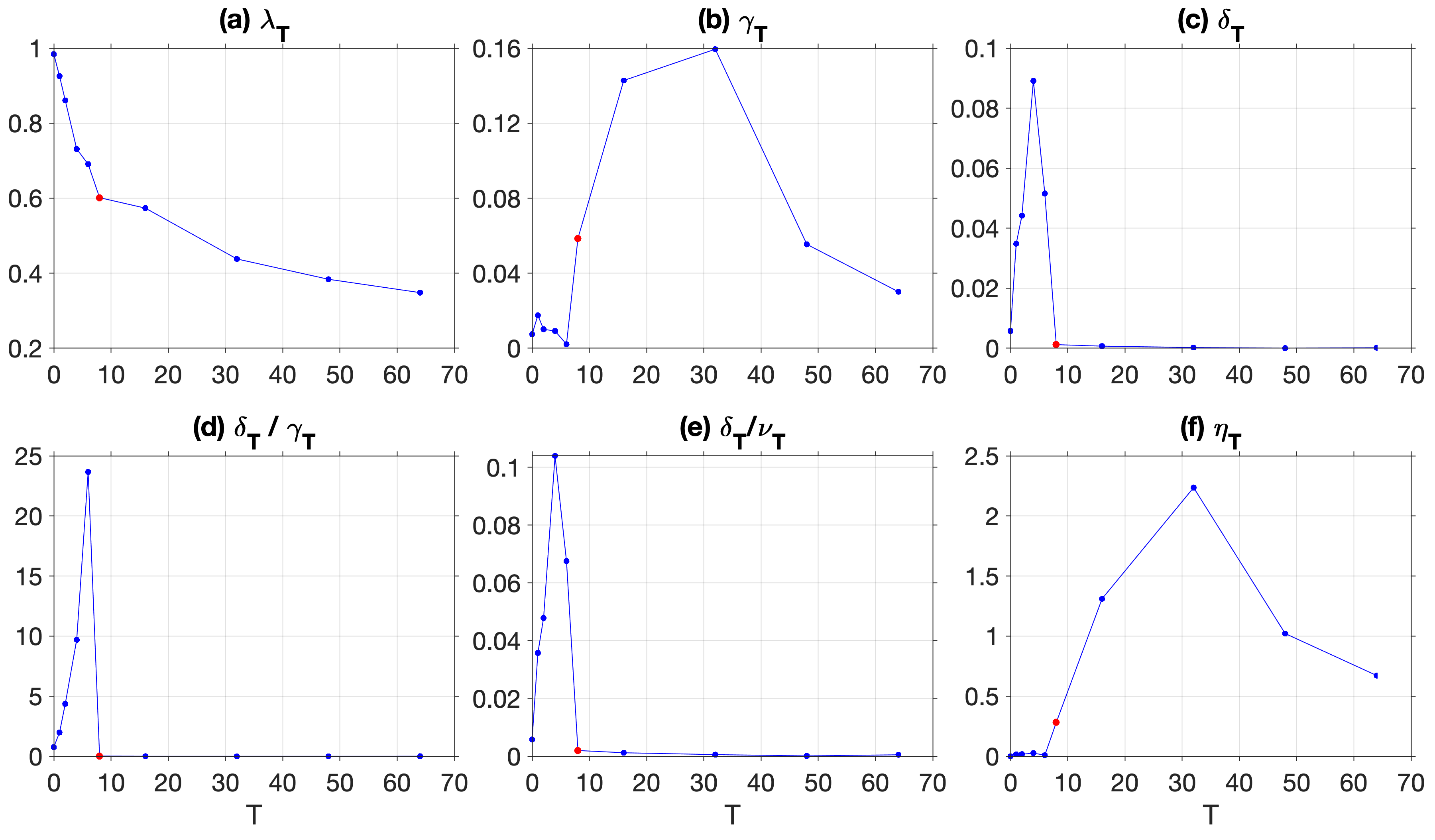}
    \caption{\label{figEta}(a) Eigenvalue~$\lambda_T \equiv \lambda_{2,T}$, (b) spectral gap $\gamma_T$, (c) degeneracy coefficient $\delta_T$, (d, e) ratios $\delta_T/\gamma_T$ and $\tilde \delta_T = \delta_T/ \nu_T $ with $ \nu_T \equiv \lambda_{1,T}$, and  (d) coefficient $\eta_T = \gamma_T \lambda_T T $ from~\eqref{eqEtaT} as a function of the delay-embedding window $T$ for the L63 system. Blue and red markers indicate numerical experiments with $T \in \{ 0, 1, 2, 4, 6, 8, 16, 32, 48, 64 \}$ using datasets of $ N = \text{64,000} $ samples taken at an interval $ \Delta t = 0.01$. Red markers highlight the $T = 8 $ experiment shown in Figure~\ref{figPhi} and Figures~\ref{figLambda}--\ref{figOmega} ahead. The integral operators $K_T$ employ a variable-bandwidth Gaussian kernel with bistochastic (symmetric) Markov normalization, as described in Section~\ref{secDataDriven}.}
\end{figure}

\section{\label{secProof}Proof of Theorem~\ref{thmMain}}

\subsection{\label{secClaim1}Proof of Claim~(i)}
Noting that $z$ and $z^*$ are mutually orthogonal unit vectors in $L^2(\mu)$, and $U^0 = \Id $,  we begin by writing down the expansion
\begin{equation}
    \label{eqUTDecomp}
    U^t z = \alpha_t z + \beta_t z^* + r_t,
\end{equation}
where $ \alpha_t = \langle z, U^t z \rangle$ (as in the statement of the theorem), $ \beta_t = \langle z^*, U^t z \rangle$, $ r_t$ is a residual orthogonal to both $z$ and $z^*$, and
\begin{equation}
    \label{eqABRBound}
    \begin{gathered}
        \lvert \alpha_t \rvert \leq 1, \quad \lvert \beta_t \rvert \leq 1, \quad \lVert r_t \rVert_{L^2(\mu)} \leq 1, \\
        \alpha_0 = 1, \quad \beta_0 = \lVert r_0 \rVert_{L^2(\mu)} = 0.
    \end{gathered}
\end{equation}
It then follows that 
\begin{equation}
    \label{eqAlphaBound}
    \lVert U^tz - \alpha_t z \rVert_{L^2(\mu)} \leq \lvert \beta_t \rvert + \lVert r_t \rVert_{L^2(\mu)},
\end{equation}
and we will prove the first claim of the theorem by bounding $ \lvert \beta_t \rvert$ and $\lVert r_t \rVert_{L^2(\mu)}$.

To that end, note first that by skew-symmetry and reality of $V$, and by definition of the $L^2(\mu)$ inner product,
\begin{displaymath}
    \langle z^*, V z \rangle = - \langle V z^*, z \rangle = - \langle ( V z)^*, z \rangle = - \langle Vz, z^* \rangle^* = - \langle z^*, Vz \rangle, 
\end{displaymath}
so $ \langle z^*, V z \rangle = 0$. Moreover,
\begin{displaymath}
    \langle z, V z \rangle^* = \langle z^*, (V z )^* \rangle = \langle z^*, V z^* \rangle = - \langle V z^*, z^* \rangle = - \langle z^*, V z^* \rangle^* = - \langle z, V z \rangle,
\end{displaymath}
so $ \langle z, V z \rangle$ and $ \langle z^*, V z^* \rangle$ are purely imaginary. In fact, it follows from the definition of $z$ that
\begin{equation}
    \label{eqVZ}
    \langle z, V z \rangle / i = \langle \psi, V \phi \rangle = \omega,
\end{equation}
and from the definition of the generator that
\begin{displaymath}
    \frac{1}{i}\langle z, V z \rangle = \lim_{t \to 0}\frac{1}{it} \langle z, (U^t - \Id) z \rangle = \lim_{t\to 0} \frac{1}{it}(\alpha_t - 1 ) = \dot \alpha_t\rvert_{t=0}, 
\end{displaymath} 
so we can use $ \langle z, V z \rangle / i $ and $ \dot \alpha_t \rvert_{t=0} / i$ as alternative definitions of the frequency $ \omega$ as in the statement of Theorem~\ref{thmMain}.

Using these relationships, the generator equation in~\eqref{eqGenerator}, and the bound for $ \lvert \beta_t \rvert $ in~\eqref{eqABRBound}, we obtain
\begin{align*}
    \frac{d\ }{dt} \lvert \beta_t \rvert^2 &= 2 \Real \left(  \beta_t^* \frac{d \beta_t }{ dt} \right) = 2 \Real \left(  \beta_t^* \frac{d\ }{ dt} \langle z^*, U^t z \rangle \right) =  2 \Real \left( \beta_t^*  \langle z^*, V U^t z \rangle \right) \\
    & = - 2 \Real \left( \beta_t^*  \langle V z^*, U^t z \rangle \right) = - 2 \Real \left( \beta_t^*  \langle V z^*, \alpha_t z + \beta_t z^* + r_t \rangle \right) \\
    & = - 2 \Real \left( \beta_t^* \langle V z^*, r_t \rangle \right) \leq 2 \lvert \beta_t \rvert \lvert \langle V z^*, r_t \rangle \rvert \leq 2  \lVert V z \rVert_{L^2(\mu)} \lVert r_t \rVert_{L^2(\mu)}. 
\end{align*}
Therefore, the squared modulus $\lvert \beta_t \rvert^2$ is bounded by a solution of the differential inequality
\begin{equation}
    \label{eqBetaOde}
    \frac{d\ }{ dt} \lvert \beta_t \rvert^2 \leq \lVert V z \rVert_{L^2(\mu)} \lVert r_t \rVert_{L^2(\mu)}, \quad \beta_0 = 0,  
\end{equation}
where we have used~\eqref{eqABRBound} to set the initial conditions Note that we were able to use the generator equation in order to arrive at this relation since $ z \in \ran K_T $, and every element in $ \ran K_T $ has a $C^1(M)$ representative and thus lies in the domain of the generator, $D(V)$. 

Inspecting~\eqref{eqAlphaBound} and~\eqref{eqBetaOde} indicates that the norm of the residual $ \lVert r_t \rVert_{L^2(\mu)}$ bounds $ \lVert U^t z - \alpha_t z \rVert_{L^2(\mu)}$ both directly, in~\eqref{eqAlphaBound}, and indirectly by bounding the rate of growth of $ \lvert \beta_t \rvert^2$, in~\eqref{eqBetaOde}. In addition, $ \frac{d\ }{dt}\lvert \beta_t \rvert^2$ depends on the norm $ \lVert V z \rVert_{L^2(\mu)}$. The following two lemmas are useful for estimating these terms. 

\begin{lemma}
    \label{lemmaCommutator}
    With the notation and assumptions of Theorem~\ref{thmMain}, for every $ t \geq 0 $ and $ T > 0 $ the commutator $[U^t, K_T ]$ satisfies 
    \begin{displaymath}
        \lVert [U^t, K_T] \rVert \leq \frac{2 \lVert h \rVert_{C^1(\mathbb R_+)} \lVert d \rVert^2_{C(X\times X)} t }{T},
    \end{displaymath}
    where $ \lVert \cdot \rVert $ denotes $L^2(\mu)$ operator norm. 
\end{lemma}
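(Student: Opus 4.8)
The plan is to reduce the commutator estimate to a bound on the integral kernel of the conjugated operator $U^t K_T U^{-t} - K_T$. Since $U^t$ is unitary, I would first write $[U^t, K_T] = (U^t K_T U^{-t} - K_T)\,U^t$, so that $\lVert[U^t,K_T]\rVert = \lVert U^t K_T U^{-t} - K_T\rVert$. A short change of variables using the $\Phi^t$-invariance of $\mu$ shows that $U^t K_T U^{-t}$ is again an integral operator, with kernel $(x,x')\mapsto k_T(\Phi^t(x),\Phi^t(x'))$; hence $U^t K_T U^{-t} - K_T$ is the integral operator induced by $g_t(x,x') := k_T(\Phi^t(x),\Phi^t(x')) - k_T(x,x')$. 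Because $\mu$ is a probability measure and $k_T$ is continuous (hence bounded on $X\times X$), the $L^2(\mu)$ operator norm of this integral operator is controlled by its Hilbert--Schmidt norm $\lVert g_t\rVert_{L^2(\mu\times\mu)}$, and thus by the essential supremum of $\lvert g_t\rvert$ over $X\times X$.

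The core of the argument is then a pointwise estimate on $g_t$. Using that $h$ is $C^1$ with bounded derivative, $\lvert g_t(x,x')\rvert \leq \lVert h\rVert_{C^1(\mathbb R_+)}\,\bigl\lvert d_T^2(\Phi^t(x),\Phi^t(x')) - d_T^2(x,x')\bigr\rvert$. From the definition~\eqref{eqDT} and a shift of the integration variable,
\begin{displaymath}
    d_T^2(\Phi^t(x),\Phi^t(x')) = \frac{1}{T}\int_0^T d^2(\Phi^{s+t}(x),\Phi^{s+t}(x'))\,ds = \frac{1}{T}\int_t^{T+t} d^2(\Phi^s(x),\Phi^s(x'))\,ds,
\end{displaymath}
so the difference $d_T^2(\Phi^t(x),\Phi^t(x')) - d_T^2(x,x')$ telescopes to $\frac{1}{T}\bigl(\int_T^{T+t} - \int_0^t\bigr) d^2(\Phi^s(x),\Phi^s(x'))\,ds$, a decomposition valid irrespective of whether $T \geq t$. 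For $x,x'\in X$ the orbit points $\Phi^s(x),\Phi^s(x')$ remain in $X$, and $d^2$ is continuous on the compact set $X\times X$, so each of the two integrals has modulus at most $t\,\lVert d^2\rVert_{C(X\times X)}$; this gives $\lvert d_T^2(\Phi^t(x),\Phi^t(x')) - d_T^2(x,x')\rvert \leq 2t\,\lVert d^2\rVert_{C(X\times X)}/T$. Combining the two displays, and noting $\lVert d^2\rVert_{C(X\times X)} = \lVert d\rVert_{C(X\times X)}^2$, yields the claimed bound on $\lVert[U^t,K_T]\rVert$.

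I do not anticipate a substantial obstacle here; the two technical points that need care are (a) the justification that the $L^2(\mu)$ operator norm of an integral operator is dominated by the sup-norm of its kernel, which relies on $\mu\times\mu$ being a probability measure and on $k_T$ being bounded on the relevant set; and (b) the change of variables in~\eqref{eqDT} under the time shift, where one must keep track that all orbit points stay in $X$ so that the uniform bound $\lVert d^2\rVert_{C(X\times X)}$ applies. Everything else is bookkeeping.
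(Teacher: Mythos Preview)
Your proposal is correct and follows essentially the same approach as the paper: both reduce the commutator to the integral operator with kernel $g_t(x,x')=k_T(\Phi^t(x),\Phi^t(x'))-k_T(x,x')$ (the paper via a direct change of variables in $U^tK_Tf-K_TU^tf$, you via the conjugation $U^tK_TU^{-t}-K_T$), then bound this kernel pointwise using the telescoping identity for $d_T^2$ and the $C^1$ bound on $h$. The only cosmetic difference is that the paper bounds the operator norm directly by $\lVert g_t\rVert_{C(X\times X)}\lVert U^tf\rVert_{L^1(\mu)}$, whereas you pass through the Hilbert--Schmidt norm; both routes use that $\mu$ is a probability measure and give the same constant.
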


\begin{proof} The proof follows closely that of Lemma~19 in \cite{DasGiannakis19}, which established a similar result for discrete-time sampling and $C(X)$ operator norm. In particular, it is a direct consequence of the definition of the delay-coordinate distance $d_T$ in~\eqref{eqDT} that for any $ x, x' \in X $ and  $ t \geq 0 $,
    \begin{align*}
        d^2_T(\Phi^t(x),\Phi^t(x')) & = \frac{1}{T} \int_0^T d^2(\Phi^{t+u}(x),\Phi^{t+u}(x')) \, du \\
        &= d^2_T(x,x') + \frac{1}{T} \left( \int_T^{T+t} du - \int_0^t du \right) d^2 ( \Phi^u(x),\Phi^u(x') ).
    \end{align*}
    Therefore, 
    \begin{displaymath}
        \lvert d^2_T(\Phi^t(x),\Phi^t(x')) - d^2_T(x,x') \rvert \leq \frac{2 \lVert d \rVert^2_{C(X\times X)}t}{T},
    \end{displaymath}
    and using the above and the definition of the kernel $k_T$ in~\eqref{eqKT} we get
    \begin{align}
        \nonumber \lvert k_T(\Phi^t(x),\Phi^t(x')) - k_T(x,x') \rvert &= \lvert h( k_T(\Phi^t(x),\Phi^t(x')) ) - h(k_T(x,x')) \rvert \\
        \nonumber & \leq \lVert h \rVert_{C^1(\mathbb R_+)} \lvert d^2_T(\Phi^t(x),\Phi^t(x')) - d^2_T(x,x') \rvert \\
        \label{eqKTBound} &\leq \frac{2 \lVert h \rVert_{C^1(\mathbb R_+)}\lVert d \rVert^2_{C(X\times X)}t}{T}.
    \end{align}
    It then follows that for any $ f \in L^2(\mu) $
    \begin{align*}
        \lVert U^t K_T f - K_T U^t f \rVert_{L^2(\mu)} &= \left \lVert  \int_\Omega \left(  k_T(\Phi^t(\cdot),x) f(x) - k_T(\cdot, x) f(\Phi^t(x)) \right) \, d\mu(x) \right \rVert_{L^2(\mu)} \\
        &= \left \lVert  \int_\Omega \left(  k_T(\Phi^t(\cdot),\Phi^t(x)) - k_T(\cdot, x)  \right) U^t f(x) \, d\mu(x) \right \rVert_{L^2(\mu)} \\
        &\leq \lVert k_T(\Phi^t(\cdot),\Phi^t(\cdot)) - k_T \rVert_{C(X\times X)}\lVert U^t f \rVert_{L^1(\mu)} \\
        &\leq \lVert k_T(\Phi^t(\cdot),\Phi^t(\cdot)) - k_T \rVert_{C(X\times X)}\lVert f \rVert_{L^2(\mu)}. 
    \end{align*}
    Note that to obtain the second and last lines in the displayed equations above we used the fact that $ \mu $ is an invariant probability measure under the flow $ \Phi^t$. Using this result and~\eqref{eqKTBound}, we arrive at
    \begin{displaymath}
        \lVert [U^t, K_T] \rVert = \lVert U^t K_T - K_T U^t \rVert \leq \frac{2 \lVert h \rVert_{C^1(\mathbb R_+)}\lVert d \rVert^2_{C(X\times X)}t}{T}, 
    \end{displaymath}
    proving the lemma. \qed
\end{proof}
 
\begin{lemma}
    \label{lemmaVBound}
    With the notation and assumptions of Theorem~\ref{thmMain}, the family of operators $ \{ A_T = V K_T \mid T > 0 \} $ is uniformly bounded on $L^2(\mu)$ with
    \begin{displaymath}
        \lVert A_T \rVert \leq \lVert \mathcal V \rVert \lVert h \rVert_{C^1(\mathbb R_+)} \lVert d^2 \rVert_{C^1(M\times M)}.
    \end{displaymath}
\end{lemma}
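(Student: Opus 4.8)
\textbf{Proof plan for Lemma~\ref{lemmaVBound}.}

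The plan is to identify $A_T = VK_T$ with an integral operator on $L^2(\mu)$, compute its kernel explicitly, and then estimate the sup-norm of that kernel uniformly in $T$; since $\mu$ is a probability measure, a bound on the kernel immediately controls the operator norm.

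First I would fix $f\in L^2(\mu)$ and use the fact, recalled in Section~\ref{secKOp}, that $K_Tf\in\ran K_T$ has the $C^1(M)$ representative $\varphi(y)=\int_\Omega k_T(y,x)f(x)\,d\mu(x)$; in particular $K_Tf\in D(V)$ and $VK_Tf$ has the continuous representative $\mathcal V\varphi=\vv V\cdot\nabla\varphi$ on $M$. Differentiating under the integral sign—justified by continuity of $k_T$ and of its first-order derivatives in the first slot on the compact set $M\times X$, together with $f\in L^1(\mu)$—gives
\begin{displaymath}
    (VK_Tf)(y)=\int_\Omega(\mathcal V_1k_T)(y,x)\,f(x)\,d\mu(x),
\end{displaymath}
where $\mathcal V_1$ denotes the application of $\vv V\cdot\nabla$ to the first argument. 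Thus $A_T$ is the integral operator with the continuous kernel $\tilde k_T:=\mathcal V_1k_T$, so that
\begin{displaymath}
    \lVert A_T\rVert\leq\lVert A_T\rVert_{\mathrm{HS}}=\lVert\tilde k_T\rVert_{L^2(\mu\times\mu)}\leq\lVert\tilde k_T\rVert_{C(X\times X)}\leq\lVert\tilde k_T\rVert_{C(M\times M)},
\end{displaymath}
and the whole problem reduces to the uniform-in-$T$ estimate $\lVert\tilde k_T\rVert_{C(M\times M)}\leq\lVert\mathcal V\rVert\,\lVert h\rVert_{C^1(\mathbb R_+)}\,\lVert d^2\rVert_{C^1(M\times M)}$.

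The heart of the argument, and the only place where the $T$-dependence must be watched, is this last estimate. Writing $\tilde k_T(y,x)=h'(d_T^2(y,x))\,\mathcal V_1d_T^2(y,x)$ and using $\lvert h'\rvert\leq\lVert h\rVert_{C^1(\mathbb R_+)}$, it suffices to show $\lvert\mathcal V_1d_T^2(y,x)\rvert\leq\lVert\mathcal V\rVert\,\lVert d^2\rVert_{C^1(M\times M)}$ for all $y,x\in M$. From the definition~\eqref{eqDT} and differentiation under the integral,
\begin{displaymath}
    \mathcal V_1d_T^2(y,x)=\frac{1}{T}\int_0^T\vv V(y)\cdot\nabla_y\!\left[d^2(\Phi^t(y),\Phi^t(x))\right]dt.
\end{displaymath}
The naive chain-rule bound on the integrand produces a factor $D\Phi^t(y)$, which can grow exponentially in $t$; the key observation is that because $\vv V$ generates $\Phi^t$ one has the pushforward identity $D\Phi^t(y)\,\vv V(y)=\vv V(\Phi^t(y))$ (equivalently, apply the generator relation~\eqref{eqGenerator}, $VU^tg=U^tVg$, to $g=d^2(\cdot,\Phi^t(x))\in C^1(M)$ and compare continuous representatives). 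Hence
\begin{displaymath}
    \vv V(y)\cdot\nabla_y\!\left[d^2(\Phi^t(y),\Phi^t(x))\right]=(\mathcal V_1d^2)(\Phi^t(y),\Phi^t(x)),
\end{displaymath}
so $\mathcal V_1d_T^2(y,x)$ is a time average over $[0,T]$ of values of the continuous function $\mathcal V_1d^2$ on $M\times M$. Since $d^2(\cdot,q)\in C^1(M)$ with $\lVert d^2(\cdot,q)\rVert_{C^1(M)}\leq\lVert d^2\rVert_{C^1(M\times M)}$ for each fixed $q\in M$, we get $\lvert(\mathcal V_1d^2)(p,q)\rvert\leq\lVert\mathcal V\rVert\,\lVert d^2\rVert_{C^1(M\times M)}$ for all $p,q\in M$, and this bound survives the time average uniformly in $T$. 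Combining the three displays yields the claimed kernel estimate and hence the lemma.

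I expect the main obstacle to be organizational rather than analytic: one must keep straight that every derivative is taken on the $C^1$ manifold $M$ (using the $C^1(M)$ representatives of elements of $\ran K_T$), even though $X$, the support of $\mu$, need not be smooth, while the final operator-norm bound only involves the restriction of $\tilde k_T$ to $X\times X$. Once the pushforward identity is invoked the estimates are elementary, and in particular no hyperbolicity or spectral-gap hypotheses enter.
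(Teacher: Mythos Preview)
Your proposal is correct and follows essentially the same route as the paper: identify $A_T$ as the integral operator with kernel $\mathcal V_1 k_T$, bound the operator norm by the sup-norm of the kernel, and reduce the uniform-in-$T$ estimate to the time average of $\mathcal V_1 d^2$ via the pushforward identity $D\Phi^t\,\vv V = \vv V\circ\Phi^t$ (which the paper phrases as commutation of $\mathcal V_1$ with $\Phi^t\otimes\Phi^t$). The only cosmetic differences are that the paper bounds $\lVert A_T\rVert$ directly by $\lVert\mathcal V_1 k_T\rVert_{C(X\times X)}$ rather than passing through the Hilbert--Schmidt norm, and states the key identity at the level of operators on $C^1(M\times M)$ rather than pointwise.
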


\begin{proof}
    We use the notation $ \mathcal V_1 : C^1(M \times M) \to C(M \times M) $ to represent the differential operator on $C^1(M\times M)$ which acts by the dynamical vector field $ \mathcal V : C^1(M) \to C(M) $ along the first coordinate; i.e., 
    \begin{displaymath}
        \mathcal V_1 f(x,x') = \lim_{t\to 0}\frac{f(\Phi^t(x),x')}{t} = \mathcal V f_{x'}(x), 
    \end{displaymath}
    where $ f_{x'} = f( \cdot, x' ) \in C^1(M)$. Note that $ \mathcal V_1$ and $ \mathcal V$ have equal operator norms, $ \lVert \mathcal V_1 \rVert = \lVert \mathcal V \rVert$. Moreover, $ \mathcal V_1 $ commutes with the induced action by the product dynamical flow $ \Phi^t \otimes \Phi^t $ on $C^1(M\times M)$, in the sense that
    \begin{displaymath}
        \mathcal V_1 ( f \circ \Phi^t \otimes \Phi^t ) = ( \mathcal V_1 f ) \circ \Phi^t \otimes \Phi^t, \quad \forall t \geq 0, \quad \forall  f \in C^1(M\times M).
    \end{displaymath}
    Using these facts, we obtain
    \begin{align*}
        \lVert \mathcal V_1 d^2_T \rVert_{C(X \times X)} & = \left \lVert \frac{1}{T} \int_0^T \mathcal V_1( d^2 \circ \Phi^t \otimes \Phi^t)  \, dt \right \rVert_{C(X \times X)} \\
        &= \left \lVert\frac{1}{T}  \int_0^T ( \mathcal V_1 d^2 )  \circ \Phi^t \otimes \Phi^t \, dt \right \rVert_{C(X \times X)} \\
        & \leq \lVert \mathcal V_1 d^2 \rVert_{C(X \times X)} \leq \lVert \mathcal V_1 \rVert \lVert d^2 \rVert_{C^1(M\times M)} = \lVert \mathcal V \rVert \lVert d^2 \rVert_{C^1(M\times M)},
    \end{align*}
    and thus
    \begin{align}
        \nonumber \lVert \mathcal V_1 k_T \rVert_{C(X\times X)} & = \lVert \mathcal V_1 ( h \circ d_T^2 ) \rVert_{C(X\times X)} \leq \lVert h \rVert_{C^1(\mathbb R_+)} \lVert \mathcal V_1 d_T^2 \rVert_{C(X \times X)} \\
        \label{eqV1KTBound} &\leq \lVert h \rVert_{C^1(\mathbb R_+)} \lVert \mathcal V \rVert \lVert d^2 \rVert_{C^1(M\times M)}.   
    \end{align}

    Now, because $k_T $ lies in $C^1(M\times M)$, for every $ f \in L^2(\mu)$ we have
    \begin{displaymath}
        A_T f = V K_T f = V \int_\Omega k_T( \cdot, x ) f(x) \, d\mu(x) = \int_\Omega \mathcal V_1 k_T(\cdot, x) f(x) \, d\mu(x),
    \end{displaymath}
    so $A_T$ is a kernel integral operator on $L^2(\mu)$ whose kernel $ \mathcal V_1 k_T $ is continuous on $X \times X$. The $ L^2(\mu) $ operator norm of $A_T$ therefore satisfies
    \begin{displaymath}
        \lVert A_T \rVert \leq \lVert \mathcal V_1 k_T \rVert_{C(X \times X)},
    \end{displaymath}
    and the claim of the lemma follows from~\eqref{eqV1KTBound}. \qed
\end{proof}

With these results in place, we proceed to bound $ \lVert r_t \rVert_{L^2(\mu)}$. First, acting with $K_T$ on both sides of~\eqref{eqUTDecomp}, we obtain
\begin{align*}
    K_T U^t z &= \alpha_t K_T z + \beta_t K_T z^* + K_T r_t \\
    &=  \frac{ \alpha_t }{ \sqrt{2} } ( \lambda_T \phi + i \nu_T \psi ) + \frac{ \beta_t }{ \sqrt{2} } ( \lambda_T \phi - i \nu_T \psi) + K_T r_t \\
    &= \lambda_T ( \alpha_t z + \beta_t z^* ) + i \delta_T (\alpha_t - \beta_t ) \psi + K_T r_t \\
    &= \lambda_T U^t z +  i \delta_T (\alpha_t - \beta_t ) \psi + (K_T - \lambda_T ) r_t \\
    &= \frac{1}{\sqrt{2}} U^t(K_T \phi + i \lambda_T \psi ) +  i \delta_T (\alpha_t - \beta_t ) \psi + (K_T - \lambda_T ) r_t \\
    &= \frac{1}{\sqrt{2}} U^t(K_T \phi + i K_T \psi ) +  i \delta_T (\alpha_t - \beta_t - U^t ) \psi + (K_T - \lambda_T ) r_t \\
    &= U^t K_T z +  i \delta_T(\alpha_t - \beta_t - U^t ) \psi + (K_T - \lambda_T ) r_t. 
\end{align*}
Therefore,
\begin{displaymath}
    (K_T - \lambda_T) r_t = - [ U^t,K_T] z + i \delta_T (U^t - \alpha_t - \beta_t) \psi.
\end{displaymath}
which, in conjunction with~\eqref{eqABRBound}, leads to 
\begin{equation}
    \label{eqRTLowerBound}
    \lVert (K_T - \lambda_T) z \rVert_{L^2(\mu)} \leq \lVert [ U^t, K_T] \rVert + 3 \delta_T. 
\end{equation}
On the other hand,
\begin{align}
    \nonumber \lVert ( K_T - \lambda_T ) r_t \rVert_{L^2(\mu)} &= \sum_{\lambda_{j,T} \in \sigma_p(K_T) \setminus \{ \lambda_T, \nu_T \}} ( \lambda_{j,T} - \lambda_T )^2 \lvert \langle \phi_{j,T}, r_t \rangle \rvert^2 \\
    \label{eqRTUpperBound}& \geq \sum_{\lambda_{j,T} \in \sigma_p(K_T) \setminus \{ \lambda_T, \nu_T \}} \gamma_T^2 \lvert \langle \phi_{j,T}, r_t \rangle \rvert^2 = \gamma_T^2 \lVert r_t \rVert_{L^2(\mu)}^2,
\end{align}
and using~\eqref{eqRTLowerBound}, \eqref{eqRTUpperBound}, and Lemma~\ref{lemmaCommutator}, we arrive at the bound 
\begin{equation}
    \label{eqRBound}
    \lVert r_t \rVert_{L^2(\mu)} \leq \frac{1}{\gamma_T} \left( \frac{2 \lVert h \rVert_{C^1(\mathbb R_+)} \lVert d \rVert^2_{C(X\times X)} t }{T} + 3 \delta_T \right) = s_t,
\end{equation}
where the function $s_t $ was defined in the statement of Theorem~\ref{thmMain}.

Next, it follows from Lemma~\ref{lemmaVBound} that 
\begin{align}
    \nonumber\lVert V z \rVert_{L^2(\mu)}  &=  \frac{1}{\sqrt{2}} \lVert  V(\phi+i\psi )  \rVert_{L^2(\mu)} =  \frac{1}{\sqrt{2}} \left \lVert  V K_T \left( \frac{ \phi }{ \lambda_T }+ i \frac{ \psi }{ \nu_T } \right) \right \rVert_{L^2(\mu)} \\
    \nonumber &= \frac{1}{\lambda_T} \left \lVert A_T \left ( z  + \frac{i}{ \sqrt{2}} \left( \frac{1}{\nu_T} - \frac{1}{\lambda_T} \right)\psi \right ) \right  \lVert_{L^2(\mu)} \\
    \label{eqVZBound} &\leq \frac{ \lVert A_T \rVert (1 + \tilde \delta_T) }{ \lambda_T } = \frac{ \lVert \mathcal V \rVert \lVert h \rVert_{C^1(\mathbb R_+)} \lVert d^2 \rVert_{C^1(M\times M)} ( 1 + \tilde \delta_T)}{\lambda_T}. 
\end{align}
Inserting the estimates for $ \lVert r_t \Vert_{L^2(\mu)}$ and $ \lVert Vz \rVert_{L^2(\mu)}$ in~\eqref{eqRBound} and~\eqref{eqVZBound}, respectively, into~\eqref{eqBetaOde}, and using the definition of the constant $C_2$ in the statement of the theorem, then leads to the differential inequality
\begin{displaymath}
    \frac{d\ }{dt} \lvert \beta_t \rvert^2 \leq \frac{C_2 \lVert \mathcal V \rVert (1 + \tilde \delta_T)}{\lambda_T} s_t, \quad \beta_0 = 0,
\end{displaymath}
and integrating we obtain
\begin{equation}
    \label{eqBetaBound}
    \lvert \beta_t \rvert^2 \leq \frac{C_2 \lVert \mathcal V \rVert (1 + \tilde \delta_T)}{\lambda_T} \int_0^t s_u \, du = \frac{ 2 C_2 (1 + \tilde \delta_T)}{\lambda_T \gamma_T}  \left( \frac{ C_1 t^2 }{T} + 3 \delta_T t \right) = S_t,
\end{equation}
where the function $S_t $ is defined in the statement of the theorem. Substituting~\eqref{eqRBound} and~\eqref{eqBetaBound} into~\eqref{eqAlphaBound} then leads to $\lVert U^t z - \alpha_t z \rVert_{L^2(\mu)} \leq s_t + \sqrt{ S_t } $, proving Claim~(i) of the theorem. 

\subsection{Proof of Claim~(ii)}

First, to verify that $ \omega $ is independent of the choice of mutually orthonormal basis functions $ \phi $ and $ \psi $, it is sufficient to consider the following two cases:
\begin{itemize}
    \item Case I: $ \lambda_T$ and $ \nu_T $ are simple eigenvalues. In this case, the claim is obvious since any unit-norm eigenvectors $ \phi' $ and $ \psi' $  corresponding to $\lambda_T$ and $\nu_T$, respectively, are related to $ \phi $ and $ \psi $ by 
        \begin{displaymath}
            \phi' = c_\phi \phi, \quad \psi' = c_\psi \psi, 
        \end{displaymath}
        where $ c_\phi, c_\psi \in \{ -1, 1 \} $. 
    \item Case II: $ \lambda_T = \nu_T $ are twofold-degenerate eigenvalues. To verify the claim, let $ \{ \phi', \psi' \}$ be any real, orthonormal basis of the corresponding eigenspace, $E$. Then, there exists a $ 2 \times 2 $ orthogonal matrix $ O $ such that 
        \begin{displaymath}
            \begin{pmatrix} 
                \phi' \\ 
                \psi' 
            \end{pmatrix}
            = O 
            \begin{pmatrix}
                \phi \\
                \psi
            \end{pmatrix},
            \quad
            O = \begin{pmatrix} 
                O_{\phi\phi} & O_{\phi\psi}  \\
                O_{\psi\phi} & O_{\psi\psi}
            \end{pmatrix}.
        \end{displaymath}
        Since  $\langle \phi, V \phi \rangle = \langle \psi, V  \psi \rangle = 0 $ (by skew-adjointness and reality of $V$, in conjunction with reality of $ \phi $ and $ \psi $), we have
        \begin{align*}
            \lvert \langle \psi', V \phi' \rangle \rvert  &= \lvert \langle O_{\psi \phi} \phi + O_{\psi\psi}\psi, O_{\phi\phi} V\phi + O_{\phi\psi} V \psi \rangle \rvert = \lvert (  O_{\psi\phi} O_{\phi\psi} - O_{\phi\phi} O_{\psi\psi} ) \omega \rvert \\
            &= \lvert \det O \rvert \lvert \omega \rvert = \lvert \omega \rvert, 
        \end{align*}
        proving that $ \lvert \omega \rvert $ is independent of the choice of real orthonormal basis of $E$. 
\end{itemize}

Next, to bound $ \lvert \alpha_t - e^{i\omega t} \rvert$, we follow a differential inequality approach similar to that used to bound $ \lvert \beta_t \rvert$ in Section~\ref{secClaim1}. In particular, let $ a_t = \alpha_t - e^{i\omega t}$. We have
\begin{displaymath}
    \lvert a_t \rvert^2  = \lvert \alpha_t \rvert^2 + 1 - 2 \Real(\alpha_t e^{-i\omega t}),
\end{displaymath}
and therefore 
\begin{equation}
    \label{eqAAT}
    \frac{d\ }{dt} \lvert a_t \rvert^2 \leq \frac{d\ }{dt} \lvert \alpha_t \rvert^2 + 2 \left \lvert \Real \frac{d\ }{dt}  \left( \alpha_t e^{-i \omega t} \right) \right \rvert.
\end{equation}
To place a bound on the first term in the right-hand side of~\eqref{eqAAT}, observe that 
\begin{align}
    \nonumber \frac{d\ }{dt} \lvert \alpha_t \rvert^2 &= 2 \Real \left( \alpha_t^* \frac{d\alpha_t}{dt} \right) = 2 \Real\left( \alpha_t^* \langle z, U^t V z \rangle  \right) = - 2 \Real\left(  \alpha_t^* \langle V z, U^t z \rangle \right) \\
    \nonumber &= - 2 \Real\left( \alpha_t^* \langle Vz, \alpha_t z + \beta_t z^* + r_t \rangle \right) \\
    \nonumber &= - 2 \Real\left( \lvert \alpha_t \rvert^2 \langle V z, z \rangle + \alpha_t^* \beta_t \langle V z, z^* \rangle + \alpha_t^* \langle V z, r_t \rangle \right) \\
    \label{eqAAT1} &= - 2 \Real\left( \alpha_t^* \langle V z, r_t \rangle \right) \leq 2 \lvert \alpha_t \rvert \lvert \langle V z, r_t \rangle \rvert \leq 2 \lvert \langle V z, r_t \rangle \rvert.
\end{align}
Note that to obtain the equality in the second-to-last line we used the facts that $ \langle z^*, V z \rangle $ and $ \langle z, V z \rangle$ are vanishing and purely imaginary, respectively (see Section~\ref{secClaim1}). Moreover, we used the bound $ \lvert \alpha_t \rvert \leq 1$ in~\eqref{eqABRBound} and Lemma~\ref{lemmaVBound} to arrive at the inequality in the last line. Similarly, using~\eqref{eqVZ} and the fact that  $ \langle z^*, V z \rangle = 0 $, leads to
\begin{align}
    \nonumber \left \lvert \Real \frac{d\ }{dt} \left( \alpha_t e^{-i \omega t} \right) \right \rvert &= \left \lvert \Real \left(  \langle V z, \alpha_t z + \beta_t z^* + r_t \rangle e^{-i \omega t } - i \omega \alpha_t e^{- i\omega t} \right) \right \rvert \\
    \label{eqAAT2}&= \left \lvert \Real( \langle V z, r_t \rangle e^{-i \omega t} ) \right \rvert \leq \lvert \langle Vz, r_t \rangle \rvert,
\end{align}
and inserting~\eqref{eqAAT1} and~\eqref{eqAAT2} into~\eqref{eqAAT}, we obtain
\begin{displaymath}
    \frac{d\ }{dt}\lvert a_t \rvert^2 \leq 4 \lvert \langle Vz, r_t \rangle \rvert = \frac{4 C_2 \lVert \mathcal V \rVert (1 + \tilde \delta_T)}{\lambda_T} s_t.
\end{displaymath}
Integrating this differential inequality subject to the initial condition $ a_0 = \alpha_0 - 1 = 0 $ then leads to
\begin{displaymath}
    \lvert \alpha_t - e^{i\omega t} \rvert^2 = \lvert a_t \rvert^2 \leq 4 S_t,
\end{displaymath}
and the bound in Claim~(ii) of Theorem~\ref{thmMain} follows.  This completes our proof of the theorem.

\section{\label{secDataDriven}Data-driven approximation}

In this section, we consider how to approximate the eigenvalues and eigenfunctions of the integral operator $K_T $, as well as the frequency $\omega$ and autocorrelation function $ \alpha_t$, from the time series data $ y_0, \ldots, y_{N-1}$ sampled at the interval $\Delta t$, as described in Section~\ref{secSetup}. Aside from errors associated by approximating continuous-time delay-coordinate maps by (their more familiar) discrete-time analogs, error analyses for the approximation scheme described below have been performed elsewhere \cite{DasGiannakis19,GiannakisEtAl19,DasEtAl20}. Here, we limit ourselves to a high-level description of the construction and its convergence in the large-data limit, relegating technical details to these references.  

\subsection{\label{secDataDrivenDesc}Construction of the data-driven approximation scheme}


The main steps in the construction of the approximation scheme are as follows:

\paragraph{Step~1 (Discrete-time delay-coordinate map)} Replace the continuous-time delay-coordinate map $F_T : \Omega \to L^2([0,T]; Y)$ by the discrete-time map $F_{Q,\Delta t} : \Omega \to Y^Q$ given by
\begin{equation}
    \label{eqFQ}
    F_{Q,\Delta t}(x) = ( F(x), F(\Phi^{\Delta t}(x)), \ldots, F(\Phi^{(Q-1)\,\Delta t}(x))).
\end{equation}
Here, $Q$ is an integer parameter corresponding to the number of delays. The map $F_{Q,\Delta t}$ with $ \Delta t = T / Q$ then induces a continuous distance-like function $ d_{T,\Delta t} : \Omega \times \Omega \to \mathbb R_+$, 
\begin{displaymath}
    d^2_{T,\Delta t}(x,x') = \frac{1}{Q} \lVert F_{Q,\Delta t}(x) - F_{Q,\Delta t}(x') \rVert_{Y^Q}^2 = \frac{1}{Q} \sum_{q=0}^{Q-1} d^2(\Phi^{(q-1)\,\Delta t}(x), \Phi^{(q-1)\,\Delta t}(x')),   
\end{displaymath}
which is meant to approximate continuous-time function $d_T$ from~\eqref{eqDT}. Specifically, standard properties of quadrature using the rectangle rule \cite{DavisRabinowitz84} lead to the estimates
\begin{align}
    \label{eqDTConvergence}
    \lVert d^2_T - d^2_{T, \Delta t} \rVert_{C(M\times M)} & \leq \frac{\lVert d^2 \rVert_{C^1(M\times M)} \, \Delta t}{2} = \frac{\lVert d^2 \rVert_{C^1(M\times M)} T}{2 Q}, \\
    \label{eqDT1Convergence}
    \lVert d^2_T - d^2_{T,\Delta t} \rVert_{C^1(M\times M)} &= o( \Delta t^0). 
\end{align}
Similarly, we approximate the continuous-time kernel $k_T$ in~\eqref{eqKT} by $ k_{T,\Delta t} := h \circ d^2_{T,\Delta t}$. Note that \eqref{eqDT1Convergence} merely indicates that as $ \Delta t \to 0$, $ d^2_{T,\Delta t}$ converges to $ d^2_T$ in $C^1$ norm. A stronger bound can be obtained if $d_T$ has higher than $C^1 $ regularity, e.g., $ \lVert d^2_T - d^2_{T,\Delta t} \rVert_{C^1(M\times M)} = O( \Delta t^\alpha ) $ if it lies in $C^{1,\alpha}(M \times M) $ for some $ \alpha > 0 $.   

\paragraph{Step~2 (Sampling measure)} Replace the Hilbert space $L^2(\mu)$ associated with the invariant measure with the finite-dimensional Hilbert space $L^2(\mu_N)$ associated with the sampling measure $ \mu_N = \sum_{n=0}^{N-1} \delta_{x_n}$ on the dynamical trajectory $ x_0, \ldots, x_{N-1} \in \Omega$ underlying the data $ y_0, \ldots, y_{N-1} $. Here, $ \delta_x$ denotes the Dirac measure supported at $ x \in \Omega$. The space $L^2(\mu_N)$ consists of equivalence classes of measurable, complex-valued functions on $ \Omega$ with common values at the sampled states $ x_0, \ldots, x_N$, and is equipped with the inner product 
\begin{displaymath}
    \langle f, g \rangle_{N} = \int_\Omega f^* g \, d\mu_N = \frac{1}{N} \sum_{n=0}^{N-1} f^*(x_n) g(x_n). 
\end{displaymath}
For simplicity of exposition, we will assume that all sampled states $x_n $ are distinct (by ergodicity, this will be the case aside from trivial cases), so $L^2(\mu_N)$ is an $N$-dimensional Hilbert space, canonically isomorphic to $ \mathbb C^N$ equipped with a normalized dot product. Under this isomorphism, an element $ f \in L^2(\mu_N)$ is represented by a column vector $ \vv f = (f_0, \ldots, f_{N-1})^\top \in \mathbb C^N$ such that $ f_n = f(x_n)$, and we have $ \langle f, g \rangle_{N} = \vv f \cdot \vv g / N$. Moreover, a linear map $A : L^2(\mu_N) \to L^2(\mu_N)$ is represented by an $N\times N$ matrix $ \bm A $ such that $ \bm A \vv f$ corresponds to the column vector representation of $ A f$. We will also assume without loss of generality that the starting state $x_0$ (and thus the entire sampled dynamical trajectory) lies in the forward-invariant manifold $M$, but note that $x_0 $ need not lie on the support $X$ of the invariant measure.  In light of these facts, our data-driven schemes can be numerically implemented using standard tools from linear algebra, and as we will see below, their formulation requires few structural modifications of their infinite-dimensional counterparts from Section~\ref{secMain}.    

\paragraph{Step~3 (Data-driven integral operator)} Approximate the kernel integral operator $K_T : L^2(\mu) \to L^2(\mu)$ by the operator $ K_{T,\Delta t,N} : L^2(\mu_N) \to L^2(\mu_N)$, where
\begin{displaymath}
    K_{T,\Delta t,N} f = \int_{\Omega} k_{T,\Delta t}(\cdot, x)f(x) \, d\mu_N(x) = \frac{1}{N} \sum_{n=0}^{N-1} k_{T,\Delta t}(\cdot,x_n)f(x_n).
\end{displaymath}
This operator is self-adjoint, and there exists a real orthonormal basis $ \{ \phi_{0,T,\Delta t,N}, \ldots, \phi_{N-1,T,\Delta t,N} \} $ of $L^2(\mu_N)$ consisting of its eigenvectors, with corresponding eigenvalues $ \lambda_{0,T,\Delta t,N} \geq \lambda_{1,T,\Delta t,N} \geq \cdots \geq \lambda_{N-1,T,\Delta t,N} \geq 0$. The data-driven operator $K_{T,\Delta t, N}$ is understood as an approximation of $K_{T} $ in the following spectral sense:
\begin{itemize}
    \item Let $ \lambda_{j,T,\Delta t,N}$ be a nonzero eigenvalue of $K_{T,\Delta t,N}$. Then, $ \lambda_{j,T,\Delta t,N}$ is employed as an approximation of eigenvalue $ \lambda_{j,T}$ of $K_T$.
    \item Eigenfunction $ \phi_{j,T,\Delta t,N} \in L^2(\mu_N) $ has a continuous representative
        \begin{equation}
            \label{eqVarphiN}
            \varphi_{j,T,\Delta t,N} = \frac{1}{\lambda_{j,T,\Delta t, N}} \int_\Omega k_{T,\Delta t}(\cdot, x)\phi_{j,T,\Delta t,N}(x)\,d\mu_N(x),
        \end{equation}
        defined everywhere on $ \Omega$. The restriction of $ \varphi_{j,T,\Delta t,N}$ to $M$ is a continuously differentiable function, employed as an approximation of $\varphi_{j,T}$ from~\eqref{eqVarphi}.
\end{itemize}
Numerically, the eigenvalues and eigenvectors of $K_{T,\Delta t,N}$ are computed by solving the eigenvalue problem for the $N\times N$ kernel matrix $ \bm K = [ k_{T,\Delta t,N}(x_m, x_n) ]_{mn} / N $, which is the matrix representation of $K_{T,\Delta t,N}$ according to Step~2 above. For kernels with rapidly decaying shape functions (e.g., the Gaussian kernels employed in Section~\ref{secExamples} below), the leading eigenvalues and eigenvectors of $\bm K$ are well approximated by the corresponding eigenvalues and eigenvectors of a sparse matrix obtained by zeroing out small entries of $ \bm K$, considerably reducing computational cost. See, e.g., Appendix~A in \cite{Giannakis19}, or Appendix~B in \cite{DasGiannakis19} for further details on numerical implementation. 

\paragraph{Step 4 (Shift operator)} For each time $ t = q \, \Delta t$, $ q \in \mathbb N_0$, approximate the Koopman operator $U^t : L^2(\mu) \to L^2(\mu)$ by the $q$-step shift operator $U^{q}_N : L^2(\mu_N) \to L^2(\mu_N)$, defined as 
\begin{displaymath}
    U^q_N f(x_n) = 
    \begin{cases}
        f(x_{n+q}), & 0 \leq n \leq N - 1 - q, \\
        0, & n > N - 1 - q.
    \end{cases}
\end{displaymath}
It should be noted that, unlike $U^t f = f \circ \Phi^t$, the shift operator $U^q_N$ is not a composition operator by the underlying dynamical flow---this is because $ \Phi^t$ does not preserve $ \mu_N $-null sets, and thus $ \circ \Phi^t$ does not lift  to an operator on equivalence classes of functions in  $L^2(\mu_N)$. In fact, while $U^t $ is unitary,  $U^q_N $ is a nilpotent operator with $U^N_N = 0$. Still, despite these differences, one can interpret $U^q_N$ as an approximation of the Koopman operator in the following sense: 
\begin{itemize}
    \item Let $\mathcal U^t : C(M) \to C(M)$, $ t \geq 0 $, denote the Koopman operator on continuous functions on the forward-invariant manifold $M$. Let also  $ \iota_N : C(M) \to L^2(\mu_N)$ be the canonical linear operator mapping $C(M)$ functions to their corresponding equivalence classes in $L^2(\mu_N)$, respectively. Then, for any fixed $ q \in \mathbb N_0$ and continuous function $ f \in C(M)$, we have
        \begin{equation}
            \label{eqUEquiv}
            U^q_N \circ \iota_N f = \iota_N \circ \mathcal U^{q\,\Delta t} f + r_N,
        \end{equation}
        where $ r_N \in L^2(\mu_N)$ are residuals whose norm converges to 0, $ \lim_{N\to\infty} \lVert r_N \rVert_{L^2(\mu_N)} = 0$. In contrast, the Koopman operator on $L^2(\mu)$ satisfies $ U^t \circ \iota f = \iota \circ \mathcal U^{t} f$ for any (fixed) $ t \in \mathbb R$, where $ \iota : C(M) \to L^2(\mu)$ is the canonical inclusion map.      
\end{itemize}

\paragraph{Step 5 (Finite-difference operator)} Approximate the generator $ V : D(V) \to L^2(\mu)$ by the finite-difference operator $ V_{\Delta t, N} : L^2(\mu_N) \to L^2(\mu_N)$, where
\begin{displaymath}
    V_{\Delta t, N} = \frac{ U_N - \Id}{ \Delta t}.
\end{displaymath}
Explicitly, we have
\begin{displaymath}
    V_{\Delta t, N} f(x_n) = 
    \begin{cases}
        ( f( x_{n+1} ) - f(x_{n} ) ) / \Delta t, & 0 \leq n \leq N - 2, \\
        -f(x_{N-1}) / \Delta t, & n= N-1.
    \end{cases}
\end{displaymath}
This operator can be understood as an approximation of the generator in the following sense: 
\begin{itemize}
    \item Let $ \mathcal V_{\Delta t} : C(M) \to C(M)$ be the finite-difference approximation of the dynamical vector field $ \mathcal V : C^1(M) \to C(M)$, given by
        \begin{displaymath}
            \mathcal V_{\Delta t} =  \frac{\mathcal U^{\Delta t} - \Id}{\Delta t}. 
        \end{displaymath}
        Then, for any $ f \in C(M)$, we have
        \begin{displaymath}
            V_{\Delta t, N} \circ \iota_N f = \iota_N \circ \mathcal V_{\Delta t} f + r_{\Delta t, N}, 
        \end{displaymath}
        where $\lim_{N\to\infty} \lVert r_{\Delta t, N} \rVert_{L^2(\mu_N)} = 0$. If, in addition, $ f $ lies in $C^1(M)$, then 
        \begin{equation}
            \label{eqFDResidual}
            \mathcal V_{\Delta t} f = \mathcal V f + r_{\Delta t},
        \end{equation}
        where the residual $ r_{\Delta t} $ converges uniformly to 0 as the sampling interval decreases, $ \lim_{ \Delta t \to 0^+} \lVert r_{\Delta t} \rVert_{C(M)} = 0 $. Note that the generator $V$ on $L^2(\mu)$ satisfies $ V \circ \iota f = \iota \circ \mathcal V f$ for any $ f \in C^1(M)$. 
\end{itemize} 

\paragraph{Step 6 (Coherent features)} In order to construct coherent observables analogously to Theorem~\ref{thmMain}, pick two consecutive, nonzero, simple eigenvalues of $K_{T,\Delta t, N}$, which we denote $ \lambda_{T,\Delta t, N}$ and $ \nu_{T,\Delta t, N}$ suppressing $j$ subscripts, and consider corresponding real normalized eigenfunctions $\phi_{T,\Delta t, N}$ and $ \psi_{T,\Delta t, N}$, respectively. Alternatively, a single twofold-degenerate nonzero eigenvalue can be used. Then, form the complex unit vector $ z_{T,\Delta t, N} = ( \phi_{T,\Delta t, N} + i \psi_{T,\Delta t, N} ) / \sqrt 2 \in L^2(\mu_N) $, and compute its continuous representative 
\begin{equation}
    \label{eqZetaN}
    \zeta_{\Delta t, N} = \frac{1}{\sqrt{2}} \int_\Omega k_{T,\Delta t, N}(\cdot, x ) \left( \frac{\phi_{\Delta t, N}}{\lambda_{\Delta t, N}} + i \frac{\psi_{\Delta t, N}}{\nu_{\Delta t, N}}  \right) \, d\mu_N(x).
\end{equation}
The function $ \zeta_{\Delta t, N}$ is employed as a data-driven coherent feature, analogous to $\zeta$ in Corollary~\ref{corMain}. Note, in particular, that $ \zeta_{\Delta t, N}$ is expressible as a finite linear combination of kernel sections $ k(\cdot, x_n ) $, and thus can be empirically evaluated at any point in $ \Omega $. Moreover, we construct data-driven analogs of the autocorrelation function $\alpha_t$ for $ t = q \, \Delta t  $ and the oscillatory frequency $ \omega $ by computing
\begin{equation}
    \label{eqAlphaN}
    \alpha_{q,\Delta t, N} = \langle z_{\Delta t, N}, U^q_{N} z_{\Delta t, N} \rangle_{N}, \quad \omega_{\Delta t, N} = \langle \psi_{\Delta t, N}, V_{\Delta t, N} \phi_{\Delta t, N} \rangle_{N}, 
\end{equation}
respectively.

\subsection{Convergence in the large-data limit}

We are interested in establishing convergence of the data-driven coherent observable $\zeta_{\Delta t, N}$, autocorrelation function $\alpha_{q,\Delta t, N}$, and oscillatory frequency $\omega_{\Delta t, N}$ to their counterparts from Section~\ref{secMain} in a limit of large data, $ N \to \infty$, and vanishing sampling interval, $ \Delta t \to 0$. For that, we follow a similar approach to \cite{DasGiannakis19,GiannakisEtAl19}, who employ spectral approximation results for kernel integral operators by Von Luxburg et al.\ \cite{VonLuxburgEtAl08}. The principal elements of this approach are as follows.

\paragraph{Operators on continuous functions} Since the operators $K_T$ and $K_{T,\Delta t, N}$ act on different Hilbert spaces, we use the space of continuous functions on the forward-invariant manifold $M$ as a universal comparison space to establish spectral convergence. In particular, since the kernels $k_T$ and $k_{T,\Delta t, N}$ are all continuous, one can consider integral operators $ \mathcal K_T : C(M) \to C(M)$ and $ \mathcal K_{T,\Delta t, N} : C(M) \to C(M)$, defined analogously to $K_T : L^2(\mu) \to L^2(\mu)$ and $K_{T,\Delta t, N} : L^2(\mu_N) \to L^2(\mu_N)$, respectively. We then have $ \iota \circ \mathcal K_T = K_T \circ \iota $ and $ \iota_N \circ \mathcal K_{T,\Delta t, N} = K_{T,\Delta t, N} \circ \iota_N$, and it is straightforward to verify that $ \lambda_{j,T}$ (resp.\ $\lambda_{j,T,\Delta t, N}$) is a nonzero eigenvalue of $K_T$ (resp.\ $K_{T,\Delta t, N}$) if and only if it is a nonzero eigenvalue of $ \mathcal K_T$ (resp.\ $\mathcal K_{T,\Delta t, N}$). Moreover, if $ \phi_{j,T} \in L^2(\mu)$ (resp.\ $ \phi_{j,T,\Delta t, N} \in L^2(\mu_N)$) is a corresponding eigenfunction of $K_T$ (resp.\ $K_{T,\Delta t,N}$), then $ \varphi_{j,T} \in C(M)$ from~\eqref{eqVarphi} (resp.\ $ \varphi_{j,T,\Delta t, N} \in C(M)$ from~\eqref{eqVarphiN}) is a corresponding eigenfunction of $\mathcal K_T$ (resp.\ $\mathcal K_{T,\Delta t, N}$). It can further be shown that $\mathcal K_T$ is compact, and clearly $\mathcal K_{T,\Delta t, N}$ has finite rank.     

\paragraph{Ergodicity and physical measures} Let $B_\mu \subseteq \Omega $ be the basin of the ergodic invariant measure $\mu $ in $ M $, i.e., the set of initial conditions $ x_0 \in \Omega$ such that the corresponding sampling measures $ \mu_N $ weak-converge to $ \mu $,
\begin{equation}
    \label{eqWeakConv}
    \lim_{N\to\infty} \mathbb E_{\mu_N} f = \mathbb E_\mu f, \quad \forall f \in C_b(\Omega),
\end{equation}
for Lebesgue almost every sampling interval $ \Delta t $. Here, $\mathbb E_\rho f = \int_\Omega f \, d \rho$ denotes expectation with respect to a measure $ \rho $, and $C_b(\Omega)$ is the Banach space of continuous, real-valued functions on $ \Omega $ equipped with the uniform norm. By ergodicity of the dynamical flow $ \Phi^t$, $ B_\mu \cap X $ is a dense subset of the support $X $ of $ \mu $. Moreover, for a class of dynamical systems possessing so-called physical measures \cite{Young02} the basin $B_\mu$ has positive measure with respect to an ambient probability measure on state space $ \Omega $ from which initial conditions are drawn, even if $X$ is a null set with respect to that measure. In such situations, the data-driven scheme described in Section~\ref{secDataDrivenDesc} converges from a ``large'' set of experimentally accessible initial conditions, which need not lie on the support of $\mu$. Examples include the L63 system, where the the ergodic invariant measure supported on the Lorenz attractor is a Sinai-Ruelle-Bowen (SRB) measure with a basin of positive Lebesgue measure in $ \Omega = \mathbb R^3$ \cite{Tucker99}. For simplicity of exposition, and without loss of generality with regards to asymptotic convergence, we will henceforth assume that the initial state $x_0$ lies in $ B_\mu \cap M $. Moreover, $ \Delta t \to 0 $ limits will be assumed to be taken along a sequence such that~\eqref{eqWeakConv} holds.  

\paragraph{Spectral convergence} Since our approach for coherent feature extraction employs on eigenvalues and  eigenvectors of kernel integral operators, it is necessary to ensure that the family $ \mathcal K_{T,\Delta t, N}$ converges to $ \mathcal K_T$ in a sufficiently strong sense so as to imply spectral convergence. Here, we consider the iterated limit of $N \to \infty$ followed by $ \Delta t \to 0$; under the former limit, empirical expectation values with respect to the sampling measures converge to expectation values with respect to the invariant measure (according to~\eqref{eqWeakConv}), and under the latter limit the kernels based on discrete-time delay-coordinate maps converge to their continuous-time counterparts (according to~\eqref{eqDTConvergence}). In particular, we have:

\begin{proposition}
    \label{propSpecConv}With notation and assumptions as above, let $ \lambda_{j,T} $ be a nonzero eigenvalue of $ \mathcal K_T$, where the ordering $ \lambda_{0,T} \geq \lambda_{1,T} \geq \cdots$  is in decreasing order and includes multiplicities. Let $ \Pi_{j,T} : C(M) \to C(M)$ be the spectral projection to the corresponding eigenspace. Then, the following hold:
    \begin{enumerate}[(i),wide]
        \item The $j$-th eigenvalues $ \lambda_{j,T,\Delta t, N}$ of $ \mathcal K_{T,\Delta t, N}$ (ordered with the same convention as the eigenvalues of $ \mathcal K_T$) converge to $ \lambda_{j,T}$, in the sense of the iterated limit
            \begin{displaymath}
                \lim_{\Delta t \to 0} \lim_{N\to\infty} \lambda_{j,T,\Delta t,N} = \lambda_{j,T}.
            \end{displaymath}
        \item For any neighborhood $ \Sigma \subseteq \mathbb C$ such that $ \sigma( \mathcal K_T ) \cap \Sigma = \{ \lambda_{j,T} \} $, the spectral projections $ \Pi_{\Sigma,T,\Delta t,N}$ of $\mathcal K_{T,\Delta t, N}$ onto $ \Sigma $ converge strongly to $ \Pi_{j,T}$. In particular, for any eigenfunction $ \varphi_{j,T} \in C(M)$ of $ \mathcal K_T$ corresponding to eigenvalue $ \lambda_{j,T} $ there exist eigenfunctions $ \varphi_{j,T,\Delta t, N} \in C(M)$ of $\mathcal K_{T,\Delta t, N}$ corresponding to $ \lambda_{j,T,\Delta t, N}$, such that
            \begin{displaymath}
                \lim_{\Delta t \to 0} \lim_{N\to\infty} \lVert \varphi_{j,T,\Delta t, N} - \varphi_{j,T } \rVert_{C(M)} = 0.
            \end{displaymath}
    \end{enumerate}
\end{proposition}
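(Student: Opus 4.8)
The plan is to prove the proposition by factoring the iterated limit through the intermediate integral operator $\mathcal K_{T,\Delta t} : C(M) \to C(M)$, defined by $\mathcal K_{T,\Delta t} f = \int_\Omega k_{T,\Delta t}(\cdot,x) f(x)\,d\mu(x)$, which carries the discrete-time delay kernel $k_{T,\Delta t} = h \circ d^2_{T,\Delta t}$ but is integrated against the invariant measure $\mu$. Then the double limit splits into an \emph{inner limit} $N \to \infty$ at fixed $\Delta t$, in which $\mathcal K_{T,\Delta t,N} \to \mathcal K_{T,\Delta t}$, and an \emph{outer limit} $\Delta t \to 0$, in which $\mathcal K_{T,\Delta t} \to \mathcal K_T$. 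All three operators act on the common Banach space $C(M)$: $\mathcal K_{T,\Delta t}$ and $\mathcal K_T$ are compact (integral operators with continuous kernels on the compact manifold $M$) and $\mathcal K_{T,\Delta t,N}$ has finite rank, while the correspondence between their nonzero eigenvalues/eigenfunctions and those of $K_T$, $K_{T,\Delta t,N}$ on $L^2(\mu)$, $L^2(\mu_N)$ via $\iota$, $\iota_N$ is already recorded above. The eigenvalue and spectral-projection assertions then follow from the perturbation theory for compact operators collected in Appendix~\ref{appSpecConv} (in the spirit of Osborn's theorem and the collectively-compact-convergence framework of \cite{VonLuxburgEtAl08}), once the appropriate mode of convergence is verified in each of the two limits.

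\textbf{Inner limit.} Fix a sampling interval $\Delta t$ outside the Lebesgue-null exceptional set, and recall $x_0 \in B_\mu \cap M$, so that the sampling measures weak-converge, $\mu_N \rightharpoonup \mu$, i.e.\ \eqref{eqWeakConv} holds. For $f \in C(M)$ the difference $\mathcal K_{T,\Delta t,N} f(x) - \mathcal K_{T,\Delta t} f(x) = \int k_{T,\Delta t}(x,x') f(x')\,d(\mu_N - \mu)(x')$ tends to $0$ for each $x$ by \eqref{eqWeakConv}, and since uniform continuity of $k_{T,\Delta t}$ on the compact set $M \times M$ makes the family $\{ k_{T,\Delta t}(x,\cdot) f(\cdot) : x \in M \}$ equicontinuous and uniformly bounded, the convergence is uniform in $x$; hence $\mathcal K_{T,\Delta t,N} \to \mathcal K_{T,\Delta t}$ strongly on $C(M)$. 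The same uniform continuity shows that $\{ \mathcal K_{T,\Delta t,N} f : \lVert f \rVert_{C(M)} \le 1,\ N \in \mathbb N \}$ is uniformly bounded and equicontinuous, hence relatively compact in $C(M)$ by Arzel\`a--Ascoli; combined with strong convergence this is precisely collectively compact convergence of $\mathcal K_{T,\Delta t,N}$ to $\mathcal K_{T,\Delta t}$. The cited spectral-stability results then give, for every nonzero eigenvalue $\lambda_{j,T,\Delta t}$ of $\mathcal K_{T,\Delta t}$, the convergence $\lim_{N\to\infty} \lambda_{j,T,\Delta t,N} = \lambda_{j,T,\Delta t}$ in the ordering with multiplicities, strong convergence of spectral projections onto any isolating neighborhood, and hence eigenfunctions $\varphi_{j,T,\Delta t,N} \to \varphi_{j,T,\Delta t}$ in $C(M)$.

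\textbf{Outer limit and composition.} Here the convergence is in operator norm. Since $h \in C^1(\mathbb R_+)$ has bounded derivative, it is Lipschitz with constant $\lVert h \rVert_{C^1(\mathbb R_+)}$, so $\lVert k_{T,\Delta t} - k_T \rVert_{C(M\times M)} \le \lVert h \rVert_{C^1(\mathbb R_+)} \lVert d^2_{T,\Delta t} - d^2_T \rVert_{C(M\times M)} \to 0$ by the rectangle-rule estimate \eqref{eqDTConvergence}; as the $C(M)$ operator norm of an integral operator is bounded by the uniform norm of its kernel (using $\mu(\Omega) = 1$), we get $\lVert \mathcal K_{T,\Delta t} - \mathcal K_T \rVert_{C(M)\to C(M)} \to 0$. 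Norm convergence to the compact operator $\mathcal K_T$ implies $\lambda_{j,T,\Delta t} \to \lambda_{j,T}$ for every nonzero eigenvalue and norm convergence of the spectral projections onto isolated spectral sets, whence $\varphi_{j,T,\Delta t} \to \varphi_{j,T}$ in $C(M)$ for a suitable choice of representatives. Composing the inner and outer limits gives Claim~(i); for Claim~(ii), given $\varphi_{j,T}$ one first selects $\varphi_{j,T,\Delta t}$ from the outer limit with $\lVert \varphi_{j,T,\Delta t} - \varphi_{j,T}\rVert_{C(M)}$ small, then $\varphi_{j,T,\Delta t,N}$ from the inner limit with $\lVert \varphi_{j,T,\Delta t,N} - \varphi_{j,T,\Delta t}\rVert_{C(M)}$ small, and applies the triangle inequality, the neighborhood $\Sigma$ in the statement absorbing any eigenvalue crossings or changes of multiplicity occurring as $\Delta t$ varies.

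\textbf{Main obstacle.} The delicate step is the inner limit: because $\mu_N \rightharpoonup \mu$ only weakly, $\mathcal K_{T,\Delta t,N}$ does \emph{not} converge to $\mathcal K_{T,\Delta t}$ in operator norm, so the elementary compact-perturbation argument available for the outer limit fails and one must instead establish collectively compact convergence and invoke the corresponding, more technical spectral stability theorem; related to this is the bookkeeping required to make the ordering with multiplicities behave well under a weak-convergence-only limit, and the need to restrict to the full-measure set of sampling intervals for which \eqref{eqWeakConv} holds. These points are treated in \cite{VonLuxburgEtAl08,DasGiannakis19,GiannakisEtAl19} and summarized in Appendix~\ref{appSpecConv}.
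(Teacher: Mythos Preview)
Your proposal is correct and follows essentially the same approach as the paper: you introduce the same intermediate operator $\mathcal K_{T,\Delta t}$, split the iterated limit into the inner ($N\to\infty$, collectively compact convergence via Arzel\`a--Ascoli/Glivenko--Cantelli) and outer ($\Delta t\to 0$, operator-norm convergence from uniform kernel convergence) steps, and invoke the same spectral-stability framework of \cite{VonLuxburgEtAl08}. The only difference is presentational---the paper treats the outer limit first and phrases the equicontinuity step as a Glivenko--Cantelli property---but the substance is identical.
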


\begin{remark} \label{rkKernel2} Analogous spectral convergence results to Proposition~\ref{propSpecConv} hold for integral operators with data-dependent kernels $k_{T,\Delta t, N}$, so long as these kernels have well defined $N \to \infty$ limits in $C(M)$ norm. Examples of such kernels include Markov-normalized kernels \cite{BerrySauer16,CoifmanLafon06,CoifmanHirn13} and variable-bandwidth Gaussian kernels \cite{BerryHarlim16}. See, e.g., Theorem~7 in \cite{GiannakisEtAl19} for a  a spectral convergence result for data-dependent kernels related to the kernels employed in the numerical experiments in Section~\ref{secExamples}. 
\end{remark}

A corollary of Proposition~\ref{propSpecConv} is that the properties the data-driven coherent observable $\zeta_{\Delta t, N}$ from~\eqref{eqZetaN} and the corresponding empirical autocorrelation function and oscillatory frequency in~\eqref{eqAlphaN} converge to their counterparts from Theorem~\ref{thmMain}, and thus obey the same pseudospectral bounds associated with dynamical coherence.

\begin{corollary}
    Under the assumptions of Proposition~\ref{propSpecConv}, the following hold in the large-data limit, $ \Delta t \to 0$ after $N \to \infty$, where $ \zeta$, $ \alpha_t$, and $\omega$ are defined in Theorem~\ref{thmMain}:  
    \begin{enumerate}[(i), wide]
        \item $ \zeta_{\Delta t, N}$ converges to the coherent feature $\zeta$, uniformly on the forward-invariant manifold $M$, i.e., 
        \begin{displaymath}
            \lim_{\Delta t \to 0 } \lim_{N\to \infty} \lVert \zeta_{\Delta t, N} - \zeta \rVert_{C(M)} = 0. 
        \end{displaymath}
        \item For any $ q \in \mathbb N$, the empirical autocorrelation $ \alpha_{q,\Delta t, N}$ converges to the autocorrelation function $ \alpha_t $  at $ t = q \, \Delta t $.
        \item The empirical oscillatory frequency $\omega_{\Delta t, N}$ converges to the frequency $\omega$.   
    \end{enumerate}
\end{corollary}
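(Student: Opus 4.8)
The plan is to reduce all three assertions to convergence inputs that are already available, using $C(M)$ as a universal comparison space. These inputs are: (a) the spectral convergence of $\mathcal K_{T,\Delta t, N}$ to $\mathcal K_T$ from Proposition~\ref{propSpecConv}, which in the iterated limit gives $\lambda_{\Delta t, N} \to \lambda_T$ and $\nu_{\Delta t, N} \to \nu_T$ (both nonzero in the limit) together with $C(M)$-convergence of the continuous representatives $\varphi_{\phi,\Delta t, N} \to \varphi_\phi$, $\varphi_{\psi,\Delta t, N} \to \varphi_\psi$ of the eigenfunctions $\phi_{\Delta t, N}$, $\psi_{\Delta t, N}$ (cf.\ \eqref{eqVarphi} and \eqref{eqVarphiN}); (b) the discretization estimate $\lVert k_{T,\Delta t} - k_T \rVert_{C(M\times M)} \le \lVert h \rVert_{C^1(\mathbb R_+)} \lVert d^2_{T,\Delta t} - d^2_T \rVert_{C(M\times M)} \to 0$ as $\Delta t \to 0$, implied by \eqref{eqDTConvergence}, along with its $C^1$ refinement \eqref{eqDT1Convergence}; and (c) the weak convergence $\mu_N \rightharpoonup \mu$ from \eqref{eqWeakConv} together with the consistency of the shift operator $U^q_N$ and finite-difference operator $V_{\Delta t, N}$ with $\mathcal U^{q\,\Delta t}$ and $\mathcal V_{\Delta t}$ up to $L^2(\mu_N)$-residuals vanishing as $N \to \infty$, from \eqref{eqUEquiv} and Step~5 of Section~\ref{secDataDrivenDesc}. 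I would also exploit the a priori bounds $\lVert U^q_N \rVert \le 1$ and $\lVert V_{\Delta t, N} \rVert \le 2/\Delta t$, and the uniform boundedness of $1/\lambda_{\Delta t, N}$, $1/\nu_{\Delta t, N}$ once $N$ is large and $\Delta t$ small.

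For Claim~(i), I would first note that the eigenvector entries of $\phi_{\Delta t, N}$, $\psi_{\Delta t, N}$ coincide with the values of their continuous representatives at the sampled states, so \eqref{eqZetaN} may be rewritten as $\zeta_{\Delta t, N} = \frac{1}{\sqrt 2} \int_\Omega k_{T,\Delta t}(\cdot, x)\big( \varphi_{\phi,\Delta t, N}(x)/\lambda_{\Delta t, N} + i\, \varphi_{\psi,\Delta t, N}(x)/\nu_{\Delta t, N} \big)\, d\mu_N(x)$. Comparing with $\zeta$ from Corollary~\ref{corMain} via a telescoping decomposition into a kernel-difference term, an eigendata-difference term, and a sampling-measure-difference term: the first is bounded by $\lVert k_{T,\Delta t} - k_T \rVert_{C(M\times M)}$ times a uniformly bounded factor and vanishes as $\Delta t \to 0$ by (b); the second vanishes in the iterated limit by (a), using $\lambda_T \neq 0 \neq \nu_T$; and the third is an integral of a fixed continuous, bounded integrand against $\mu_N - \mu$, which vanishes as $N \to \infty$ by (c) --- uniformly in the evaluation point, since $k_T$ is uniformly continuous on $M \times M$. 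Hence $\lVert \zeta_{\Delta t, N} - \zeta \rVert_{C(M)} \to 0$.

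For Claims~(ii) and~(iii) I would take the two limits in turn. Holding $\Delta t$ fixed and letting $N \to \infty$: the continuous representatives $\tilde z_{\Delta t, N} = (\varphi_{\phi,\Delta t, N} + i\, \varphi_{\psi,\Delta t, N})/\sqrt 2$ converge in $C(M)$ to the representative of the eigen-combination $z_{\Delta t}$ of the operator $\mathcal K_{T,\Delta t}$ (defined as $\mathcal K_T$ but with kernel $k_{T,\Delta t}$), with corresponding eigenfunctions $\phi_{\Delta t}$, $\psi_{\Delta t}$; empirical inner products converge to $L^2(\mu)$ inner products by \eqref{eqWeakConv}; and using $\lVert U^q_N \rVert \le 1$ (resp.\ $\lVert V_{\Delta t, N} \rVert \le 2/\Delta t$) to absorb the $N$-dependence of $\tilde z_{\Delta t, N}$ and then \eqref{eqUEquiv} (resp.\ Step~5), one obtains $\alpha_{q,\Delta t, N} \to \langle z_{\Delta t}, U^{q\,\Delta t} z_{\Delta t} \rangle$ and $\omega_{\Delta t, N} \to \langle \psi_{\Delta t}, (U^{\Delta t} - \Id)\phi_{\Delta t}/\Delta t \rangle$ in $L^2(\mu)$. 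Letting now $\Delta t \to 0$: by (b), $\mathcal K_{T,\Delta t} \to \mathcal K_T$ in $C(M)$ operator norm, so the spectral-perturbation arguments behind Proposition~\ref{propSpecConv} give $z_{\Delta t} \to z$, $\phi_{\Delta t} \to \phi$, $\psi_{\Delta t} \to \psi$ in $L^2(\mu)$; since $U^{q\,\Delta t} \to \Id$ strongly and $q\,\Delta t \to 0$, the first limit tends to $\langle z, z \rangle = 1 = \alpha_0$, and since $|\langle z_{\Delta t}, U^{q\,\Delta t}z_{\Delta t}\rangle - \langle z, U^{q\,\Delta t}z\rangle| \le 2\lVert z_{\Delta t} - z\rVert_{L^2(\mu)} \to 0$ this matches $\alpha_t\rvert_{t=q\,\Delta t}$, proving~(ii). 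For~(iii), writing $\phi_{\Delta t} = K_{T,\Delta t}\phi_{\Delta t}/\lambda_{\Delta t} \in D(V)$ and expanding, via \eqref{eqGenerator}, $(U^{\Delta t} - \Id)\phi_{\Delta t}/\Delta t = V\phi_{\Delta t} + \Delta t^{-1}\int_0^{\Delta t} (U^s - \Id)V\phi_{\Delta t}\,ds$, with $V\phi_{\Delta t} = (V K_{T,\Delta t})\phi_{\Delta t}/\lambda_{\Delta t} \to (V K_T)\phi/\lambda_T = V\phi$ in $L^2(\mu)$ (the operators $V K_{T,\Delta t}$ being integral operators with kernels $\mathcal V_1 k_{T,\Delta t} \to \mathcal V_1 k_T$ in $C(M\times M)$ by \eqref{eqDT1Convergence}, as in the proof of Lemma~\ref{lemmaVBound}), one gets $\omega_{\Delta t, N} \to \langle \psi, V\phi \rangle = \omega$.

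I expect the main obstacle to be the final step just described: $V_{\Delta t, N}$ has norm of order $1/\Delta t$, and $\phi_{\Delta t}$ itself drifts with $\Delta t$, so the remainder $\Delta t^{-1}\int_0^{\Delta t}(U^s - \Id)V\phi_{\Delta t}\,ds$ must be shown to vanish \emph{uniformly} in $\Delta t$, not merely pointwise. I would handle this through the regularizing effect of the integral operators: the kernels $\mathcal V_1 k_{T,\Delta t}$ are uniformly bounded in $C(M\times M)$ and convergent there, so the family $\{ V\phi_{\Delta t} \}$ is bounded and equicontinuous, hence precompact in $C(M)$ by Arzel\`a--Ascoli; and on a precompact subset of $C(M)$ the estimate $\lVert (U^s - \Id)g \rVert_{L^2(\mu)} \le \lVert (\mathcal U^s - \Id)g \rVert_{C(M)} \to 0$ holds uniformly as $s \to 0$, since $\Phi^s \to \Id$ uniformly on the compact manifold $M$. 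Substituting this back closes the estimate. Analogous but easier precompactness and uniform-continuity observations justify the ``uniform in the evaluation point'' claim used in~(i) and the interchanges of limits in~(ii)--(iii); the remaining manipulations are routine triangle inequalities.
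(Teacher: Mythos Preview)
Your proposal is correct and follows essentially the same route as the paper: Claim~(i) via the spectral convergence of Proposition~\ref{propSpecConv}, Claim~(ii) via the shift-operator residual~\eqref{eqUEquiv} combined with weak convergence of $\mu_N$, and Claim~(iii) via the finite-difference residual~\eqref{eqFDResidual} together with the $C^1$ kernel convergence~\eqref{eqDT1Convergence}. The paper's own proof is considerably terser (it dispatches~(i) in one line and~(iii) in one sentence), so your telescoping decomposition in~(i) and your Arzel\`a--Ascoli argument controlling the remainder $\Delta t^{-1}\int_0^{\Delta t}(U^s-\Id)V\phi_{\Delta t}\,ds$ in~(iii) supply detail the paper leaves implicit rather than diverging from it.
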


\begin{proof}
    The uniform convergence of $ \zeta_{\Delta t, N}$ to $\zeta$ in Claim~(i) is a direct consequence of Proposition~\ref{propSpecConv}. Claim~(ii) follows from the Claim~(i), in conjunction with the residual estimate in~\eqref{eqUEquiv}, viz.
    \begin{align*}
        \lim_{\Delta t \to 0} \lim_{N \to \infty} \alpha_{q,\Delta t, N} &= \lim_{\Delta t \to 0}\lim_{N\to \infty} \langle z_{\Delta t, N}, U^q_N z_{\Delta t, N} \rangle_N \\
        &= \lim_{\Delta t \to 0}\lim_{N\to \infty} \langle \iota_N \zeta_{\Delta t, N}, U^q_N \iota_N \zeta_{\Delta t, N} \rangle_N \\
        &= \lim_{\Delta t \to 0}\lim_{N\to \infty} \langle \iota_N \zeta_{\Delta t, N}, \iota_N \mathcal U^{q\, \Delta t} \zeta_{\Delta t, N} + r_N \rangle_N \\
        &= \lim_{\Delta t \to 0}\lim_{N\to \infty} \langle \iota_N \zeta_{\Delta t, N}, \iota_N \mathcal U^{q\, \Delta t} \zeta_{\Delta t, N}\rangle_N \\
        &= \lim_{\Delta t \to 0 } \lim_{N\to\infty} \frac{1}{N} \sum_{n=0}^{N-1} \zeta_{\Delta t, N}(x_n) z_{\Delta t, N}(x_{n+q}) \\
        &= \int_\Omega \zeta^* \mathcal U^{q\,\Delta t} \zeta \, d\mu = \langle \iota \zeta, \iota \mathcal U^{q\,\Delta t} \zeta \rangle = \langle z, U^{q\,\Delta t} z \rangle \\
        &= \alpha_{q\, \Delta t}.
    \end{align*}
    Claim~(iii) follows similarly, using a finite-difference residual estimate in~\eqref{eqFDResidual}, in conjunction with the $C^1$-norm convergence of $k_{T,\Delta t}$ to $k_T$ as $\Delta t \to 0$ (see~\eqref{eqDT1Convergence}).  
\end{proof}


\subsection{\label{secKernels}Choice of kernel}

Following \cite{DasEtAl20}, in the numerical experiments described below we employ integral operators $K_{T,\Delta t, N} : L^2(\mu_N) \to L^2(\mu_N)$ associated with a family of symmetric, Markov-normalized  kernels $k_{T,\Delta t, N}$ constructed using the variable-bandwidth Gaussian kernels in conjunction with the bistochastic Markov normalization procedure  proposed in \cite{BerryHarlim16} and \cite{CoifmanHirn13}, respectively. Specifically, to build $k_{T,\Delta t, N}$ we start from a radial Gaussian kernel $\bar k_{T,\Delta t} : \Omega \times \Omega \to \mathbb R_+$ on delay-coordinate mapped data, 
\begin{displaymath}
    \bar k_{T,\Delta t}( x, x' ) = \exp\left( - \frac{d^2_{T,\Delta t}(x,x')}{\bar \sigma^2} \right),
\end{displaymath}
where $\bar \sigma $ is a positive bandwidth parameter determined numerically from the data (see, e.g., Algorithm~1 in \cite{Giannakis19}). Using this kernel, we compute the bandwidth function $ \rho_{T,\Delta t, N} \in C(\Omega)$ given by 
\begin{displaymath}
    \rho_{T,\Delta t, N}(x) = \left( \int_\Omega \bar k_{T,\Delta t}(x,\cdot) \, d\mu_N \right)^{-1/  m} = \left( \frac{1}{N} \sum_{n=0}^{N-1} \bar k_{T,\Delta t}(x,x_n)  \right)^{-1/m}.
\end{displaymath}
Here, $ m > 0 $ is an estimate of the dimension of the support $X$ of the invariant measure, computed through the same procedure used to tune the kernel bandwidth $ \bar \sigma $. We then build the variable-bandwidth kernel $ \kappa_{T,\Delta t, N} : \Omega \times \Omega \to \mathbb R_+$, where
\begin{equation}
    \label{eqKVB}
    \kappa_{T,\Delta t, N}(x,x') = \exp \left( -  \frac{d^2_{T,\Delta t}(x,x')}{\sigma^2 \rho_{T,\Delta t, N}(x) \rho_{T,\Delta t, N}(x')} \right).
\end{equation}
In the above, $ \sigma $ is a positive bandwidth parameter determined automatically in a similar manner as $ \bar \sigma $, though note that in general $ \sigma $ and $ \bar \sigma $ have different values. 

By construction, $ \kappa_{T,\Delta t, N}$ is, continuous, positive, and bounded away from zero on $M \times M$. Intuitively, the function $ \rho_{T,\Delta t, N}^{-m}$ can be thought of as a kernel estimate of the ``sampling density'' of the data relative to an ambient measure. The variable-bandwidth construction in~\eqref{eqKVB} can then be thought of as a data-adaptive adjustment of the bandwidth $ \sigma $, such that a data point $x$ is assigned a smaller (larger) bandwidth $ \sigma \rho_{T,\Delta t, N}(x)$ when the sampling density is higher (lower), thus reducing sensitivity to sampling errors. This intuition can be made precise if the support $ X $ has the structure of a Riemannian manifold and $ \mu $ the structure of a smooth volume form. In that case, the variable-bandwidth kernel effects a conformal change of Riemannian metric on the data such that in the new geometry the invariant measure has constant density relative to the Riemannian volume form; see \cite{Giannakis19} for further details. 

Next, we normalize the kernel $\kappa_{T,\Delta t, N} $ to obtain a symmetric Markov kernel $ k_{T,\Delta t, N} : \Omega \times \Omega \to \mathbb R_+$ by first computing the strictly positive, continuous functions 
\begin{displaymath}
    u_{T,\Delta t, N} = \int_\Omega \kappa_{T,\Delta t, N}(\cdot, x ) \, d\mu_N(x), \quad v_{T,\Delta t, N} = \int_\Omega \frac{\kappa_{T,\Delta t, N}(\cdot, x)}{u_{T,\Delta t, N}(x)} \, d\mu_N(x),
\end{displaymath}
and then defining 
\begin{equation}
    \label{eqKBistoch}
    k_{T,\Delta t, N}(x,x') = \int_\Omega \frac{\kappa_{T,\Delta t, N}(x,x'')\kappa_{T,\Delta t,N}(x'',x)}{ u_{T,\Delta t, N}(x) v_{T,\Delta t, N}(x'') u_{T,\Delta t, N}(x')} \, d\mu_N(x'').
\end{equation}
It can be readily verified that with this definition $k_{T,\Delta t, N}$ is a symmetric, strictly positive kernel with the Markov property, $ \int_{\Omega} k_{T,\Delta t, N}(x,x') \, d\mu_N(x') = 1$, for all $ x \in \Omega$. Moreover, $k_{T, \Delta t, N}$ is (strictly) positive-definite on the support of $ \mu_N $ if $ \kappa_{T,\Delta t, N}$ is (strictly) positive-definite. It can further be shown \cite{GiannakisEtAl19} that in the large-data limit, $ \Delta t \to 0 $ after $N \to \infty$, $ k_{T,\Delta t,N}$, converges to an $L^2(\mu)$-Markov, symmetric, continuous kernel $k_{T}$ so an analogous spectral convergence result to Proposition~\ref{propSpecConv} holds for this class of kernels (see also Remarks~\ref{rkKernel1} and~\ref{rkKernel2}). 

For the purposes of extraction of coherent observables of measure-preserving, ergodic dynamics, symmetric Markov kernels have the natural property of exhibiting a constant eigenfunction corresponding to the top eigenvalue, $ \lambda_{0,T,\Delta t, N} = \lambda_{0,T} = 1$, with the remaining eigenfunctions capturing mutually orthogonal features orthogonal to the constant. See Appendix~B in \cite{DasEtAl20} for pseudocode for solving the eigenvalue problem for $K_{T,\Delta t, N}$, where explicit formation of the kernel in~\eqref{eqKBistoch} is avoided through singular value decomposition of a non-symmetric kernel matrix.

\section{\label{secExamples}Numerical examples}

\subsection{\label{secDatasets}Dataset description}

As an application of the results in Sections~\ref{secMain}--\ref{secDataDriven}, we study the properties of eigenfunctions of the integral operators $K_{T,\Delta t, N}$ induced by the L63 system on $  \Omega = \mathbb R^3$ with the standard parameters, 
\begin{gather*}
    \dot x = \vv V(x), \quad x = (x^1,x^2,x^3) \in \mathbb R^3, \quad \vv V(x) = ( V^1, V^2, V^3 ), \\
    V^1 = 10( x^2 - x^1 ), \quad V^2 = 28 x^1 - x^2 - x^1 x^3, \quad V^3 = x^1 x^2 - 8 x^3 / 3.
\end{gather*}
We generate numerical trajectories $x_0, \ldots, x_{\tilde N-1} \in \Omega$  sampled at an interval $ \Delta t = 0.01 $ natural time units using Matlab's \texttt{ode45} solver. Numerical integration starts at an arbitrary point $ \tilde x \in \mathbb R^3$, and we allow the state to settle near the Lorenz attractor over a spinup time of $ 640 $ time units before collecting the first sample $x_0$. In anticipation of the fact that we will be using the delay-coordinate map in~\eqref{eqFQ}, we sample a total of $ \tilde N = N + Q - 1$ states, where $ Q $ is the number of delays, and $N $ is fixed at $N = \text{64,000} $.  

We consider two experiments, one with $Q=1 $ corresponding to no delays ($T=0$) and another one with $ Q = 800$ corresponding to a delay-embedding window of $ T = Q \, \Delta t = 8 $ natural time units. The latter, is approximately equal to 9 Lyapunov characteristic times $ T_L = 1 / \Lambda$, where $ \Lambda \approx 0.91 $ \cite{Sprott03} is the positive Lyapunov exponent of the L63 system. The $ T = 8 $ embedding window is also approximately equal to 10 oscillations assuming a characteristic oscillatory timescale of $ T_o = 0.8 $. In both cases we set the observation map $ F : \Omega \to Y $ to the identity map on $  \mathbb R^3 $, so the corresponding delay coordinate map $F_{Q,\Delta t} $ takes values in $ Y^Q = \mathbb R^{3 Q} $. Note that, after delay embedding, each experiment has $ N = \text{64,000} $ samples $ y_n = F_{Q,\Delta t}$ available for analysis, which corresponds to $ 800 $ oscillatory timescales $T_o$. 

As stated in Section~\ref{secSetup}, this L63 setup rigorously satisfies all the assumptions made in Theorem~\ref{thmMain} \cite{Tucker99,LuzzattoEtAl05,LawEtAl14}. In addition, since $ \Delta t \ll T_o$, $ ( N - 1 ) \, \Delta t \gg T_o$, and in the $T = 8$ setup, $ \Delta t \ll T $, we expect no significant sampling errors to be present in our numerical experiments; in particular, we expect the leading eigenfunctions of the data-driven integral operators $K_{T,\Delta t, N}$ to be good approximations of the corresponding eigenfunctions of the operators $K_T$ from Theorem~\ref{thmMain}.

\subsection{Coherent observables}

We now discuss the properties of eigenfunctions of $K_{T,\Delta t, N}$ constructed using the approach described in Section~\ref{secKernels}, some of which were already shown in Figures~\ref{figPhi} and~\ref{figEta}. All results were obtained using the symmetric Markov kernels in~\eqref{eqKBistoch} with $N = \text{64,000}$, $ \Delta t = 0.01$, and representative values of $T$ in the range 0 to 64. For the rest of this section, we suppress $ \Delta t $ and $N $ indices from our notation. Moreover, we do not distinguish between eigenfunctions $ z \in L^2(\mu_N)$ and their continuous representatives $ \zeta \in C(M)$, as our visualizations will be restricted to the training dataset $\{ x_n \}_{n=0}^{N-1}$ for which $ z(x_n) = \zeta(x_n)$.  

We begin in Figure~\ref{figLambda} with a plot of the leading 20 eigenvalues $ \lambda_{j,T}$ of $K_T $ for $T = 0 $ and $ T=8 $, where both operators have the top eigenvalue  $ \lambda_{0,T} = 1 $ by Markovianity of the kernels. When $T = 0 $, $ K_T $ has a small spectral gap $ \lambda_{0,T} - \lambda_{1,T} \approx 0.0075$, and the subsequent eigenvalues exhibit a gradual decay, reaching $ \lambda_{j,T} \approx 0.8$ at $ j = 20$. In contrast, when $ T = 8 $, $K_T$ exhibits a significantly larger spectral gap $ \lambda_{0,T} - \lambda_{1,T} \approx 0.4$, with a nearly degenerate corresponding eigenspace. In particular, we have $ \lambda_{1,T} \approx 0.6032$ and  $ \lambda_{2,T} \approx 0.6015 $, and the corresponding gap parameters from Theorem~\ref{thmMain} take the values $ \delta_T \approx 0.001$ and $ \tilde \delta_T \approx 0.002 $. Thus, on the basis of Theorem~\ref{thmMain}, the complex-valued observable $ z = ( \phi_{1,T} + i \phi_{2,T} ) / \sqrt{2}$ for $T = 8 $ is a good candidate of a dynamically coherent feature evolving as an $ \epsilon$-approximate eigenfunction of the Koopman operator with small $\epsilon$. The scatterplots and time series plots in Figure~\ref{figPhi} were already suggestive of this behavior, which we now examine in further detail. As a point of comparison, we consider the corresponding observable $z$ constructed from the leading eigenfunctions of $K_T$ at $T = 0 $, which were also depicted in Figure~\ref{figPhi}. 

\begin{figure}
    \centering
    \includegraphics[width=.7\linewidth]{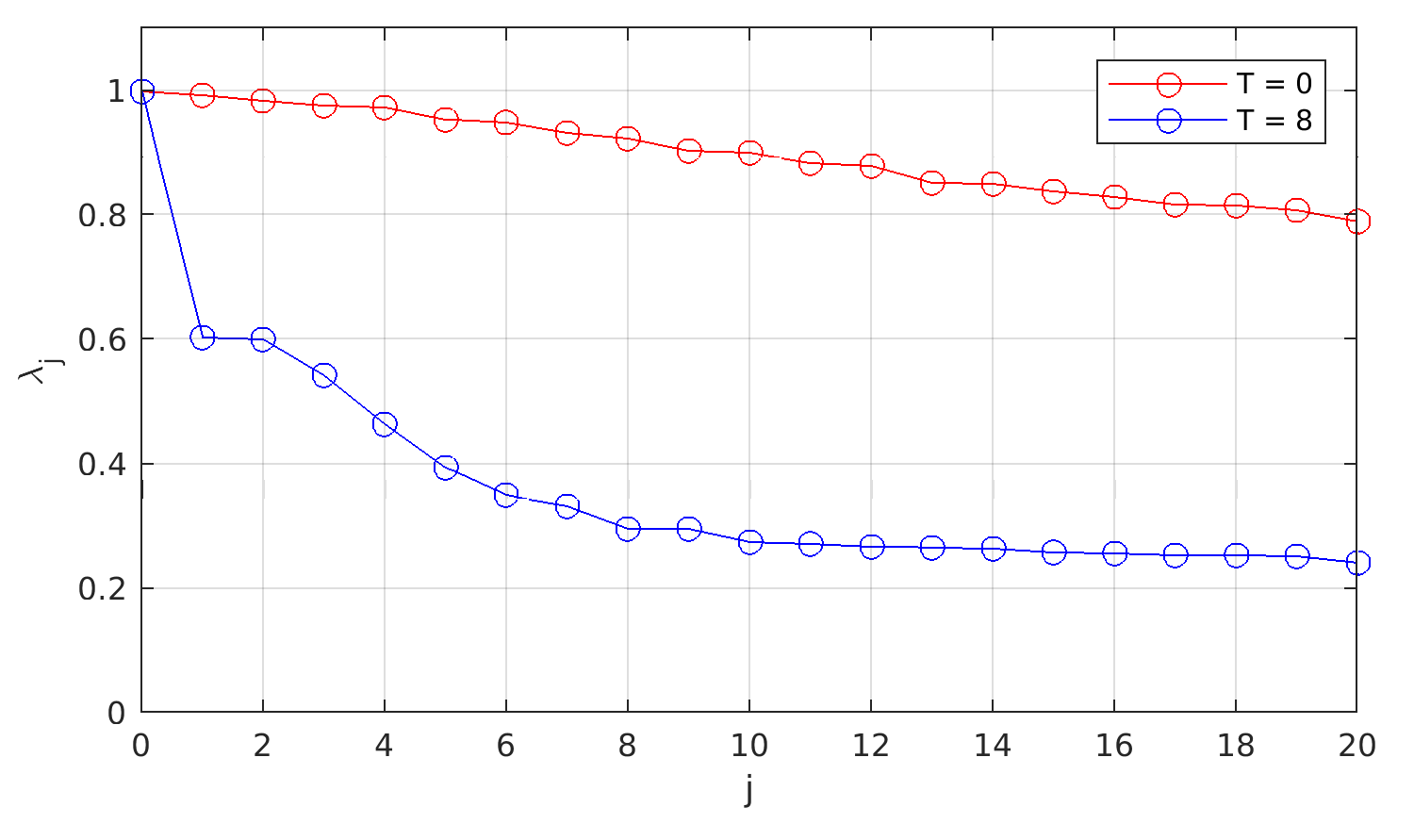}
    \caption{\label{figLambda}Leading 20 eigenvalues $\lambda_{j,T}$ of the integral operators $K_T$ for $T=0 $ and $ T = 8 $.}
\end{figure}

Figure~\ref{figZ} shows the evolution of the observables $z$ as a time-parameterized curve $ t_n \mapsto z(x_n)$ on the complex plane over a portion of the training data spanning 50 natural time units. In effect, these plots correspond to samplings of complex-valued functions on the Lorenz attractor along dynamical trajectories, akin to the time series plots in Figure~\ref{figPhi} which (up to a scaling by a factor of $\sqrt{2}$) correspond to the real and imaginary parts of $z$. The $T=0$ evolution traces out what qualitatively resembles a two-dimensional projection of the attractor. In particular, we do not expect a dynamically coherent behavior for this observable, as its evolution comprises of two cycles with a mixing region when $\Real z \simeq 0 $ which is not too different from the raw L63 dynamics. This lack of coherence is manifestly visible in the scatterplots in Figure~\ref{figPhi} depicting the real and imaginary parts of $ z $ acted upon by the Koopman operator, and can also be assessed more quantitatively through plots of the time-autocorrelation function $\alpha_t $, shown in Figure~\ref{figAlpha}. There, the modulus $ \lvert \alpha_t \rvert$ is seen to rapidly decay from its initial value $\lvert \alpha_0 \rvert = 1$, reaching $ \lvert \alpha_t \rvert \approx 0.05$ at $ t \approx 0.3$, and never exceeds 0.4 after $ \simeq 1 $ Lyapunov time.           

In contrast, the observable $z $ constructed from the eigenfunctions of $ K_T$ at $ T = 8$ exhibits a fundamentally different behavior, consistent with an approximate cycle that remains coherent over several Lyapunov timescales. In Figure~\ref{figZ}, the $ T = 8 $ dynamical trajectory lies in what appears to be a disk in the complex plane, executing a predominantly azimuthal motion with a slow radial motion (amplitude modulation) superposed. In particular, the real and imaginary parts of $ z $ have a 90$^\circ$ phase difference to a good approximation (at least when $ \lvert z \rvert $ is not too small), and as indicated by the time series plots in Figure~\ref{figPhi}, they have a nearly constant characteristic frequency. The coherent dynamical evolution stemming from this behavior is visually evident in the scatterplots of the real and imaginary parts of $U^t z $ in Figure~\ref{figPhi}, which appear to ``resist'' mixing of level sets on significantly longer timescales than the $ T = 0 $ eigenfunctions. 

More quantitatively, in Figure~\ref{figAlpha}, the evolution of the autocorrelation function $\alpha_t$ of $z$ for  $ T= 8 $ is consistent with an amplitude-modulated harmonic oscillator with a well-defined carrier frequency and slowly-varying envelope function. In particular, the real and imaginary parts of $ \alpha_t$ oscillate at a near-constant frequency, and remain phase-locked to a 90$^\circ$ phase difference at least out to $ t = 10 $ natural time units, or $ \simeq 10 $ Lyapunov times. Meanwhile, the modulus $ \lvert \alpha_t \rvert$ exhibits a significantly slower decay than what was observed for $ T = 0 $, and remains above 0.4 for all $ t \in [ 0, 10 ]$.  In Figure~\ref{figOmega} we compare the evolution of the autocorrelation function $\alpha_t$ with a pure sinusoid $ e^{i\omega t}$ with frequency $ \omega $ determined through the finite-difference-approximated generator using~\eqref{eqAlphaN}. The generator-based frequency, $ \omega \approx 8.24$ (corresponding to a period of $ 2 \pi / \omega \approx 0.76$), is seen to accurately capture the carrier frequency of the $ \alpha_t $ signal, as expected from Theorem~\ref{thmMain}, with a slow build-up of phase decoherence that becomes noticeable by $ t \simeq 10 $. 

\begin{figure}
    \includegraphics[width=\linewidth]{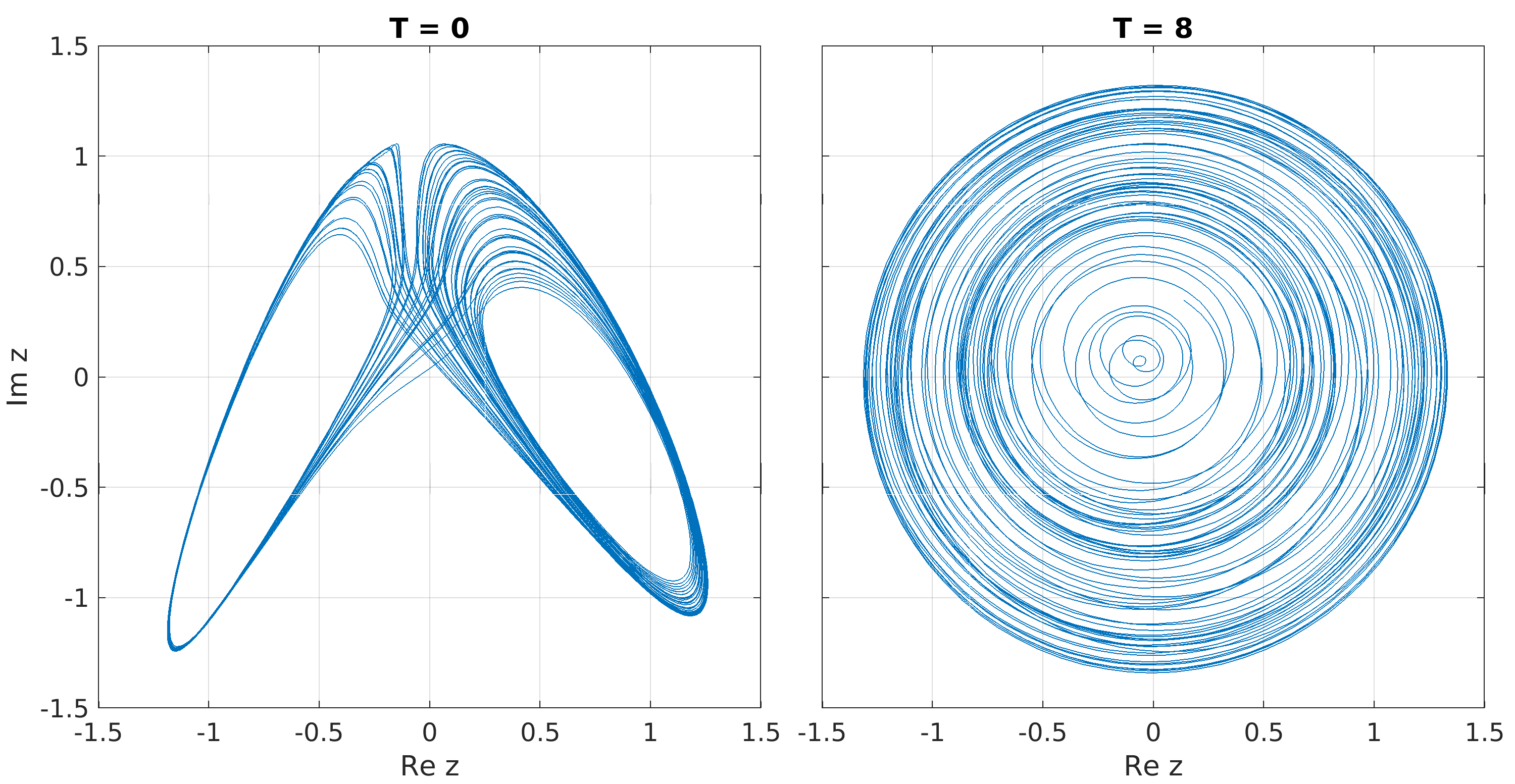}
    \caption{\label{figZ}Evolution of the real and imaginary parts of the observable $ z = ( \phi_{1,T} + i \phi_{2,T} ) / \sqrt{2} $, constructed using the leading two nonconstant eigenfunctions of the integral operator $K_T$ for no delays ($T=0$) and $T=8$. Here, $z$ is plotted as a time-parameterized curve $ t_n \mapsto z(x_n)$ on the complex plane, corresponding to a sampling of its values along an L63 dynamical trajectory at times $ t_n = n \, \Delta t$. For clarity of visualization, $t_n $ is restricted to a time interval of length 50 (whereas the full training datasets span 640 natural time units).} 
\end{figure}

\begin{figure}
    \centering
    \includegraphics[width=.7\linewidth]{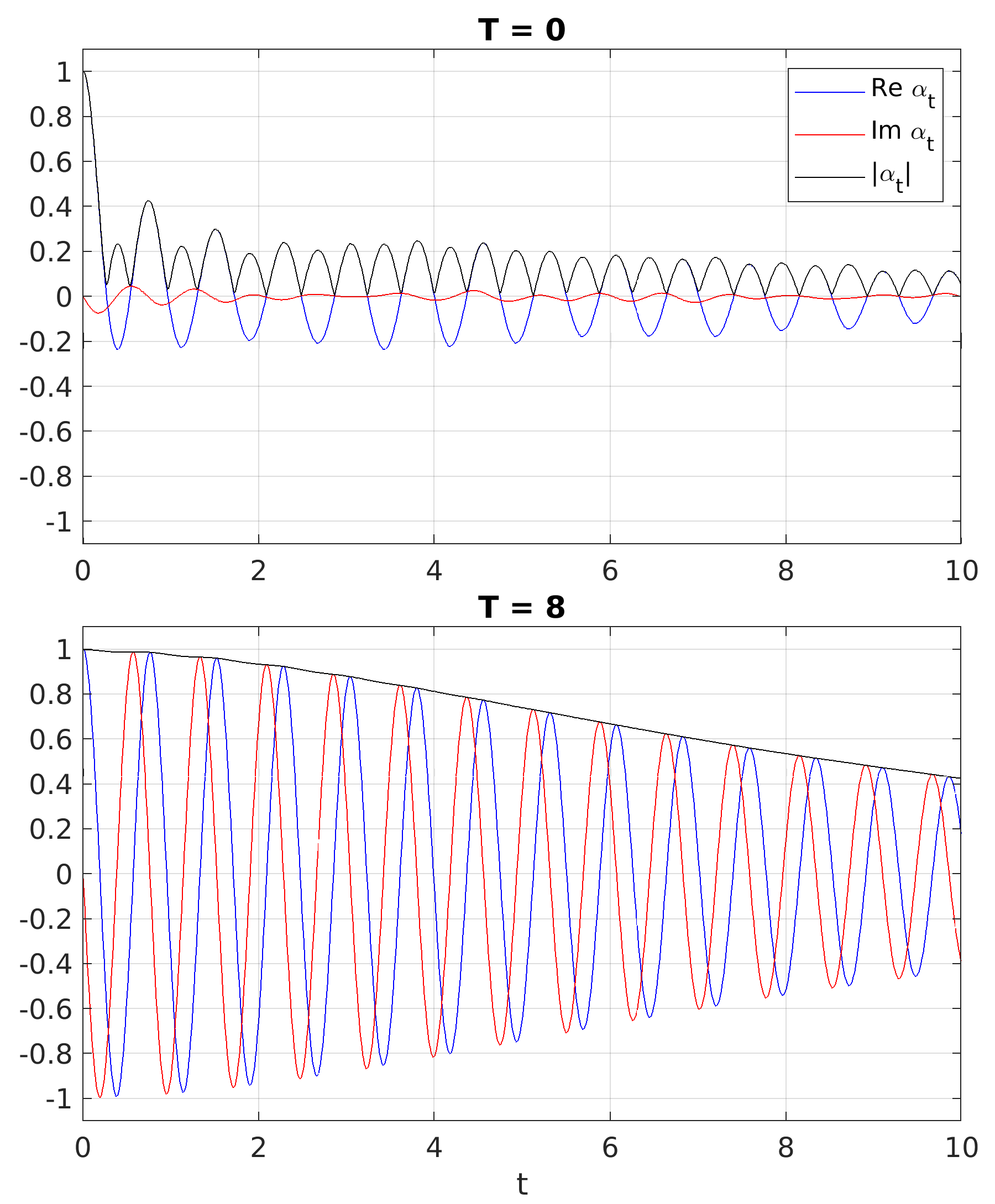}
    \caption{\label{figAlpha}Real part, imaginary part, and modulus of the time autocorrelation function $ \alpha_t$ of the observables $z $ in Figure~\ref{figZ}.}
\end{figure}

\begin{figure}
    \centering
    \includegraphics[width=.7\linewidth]{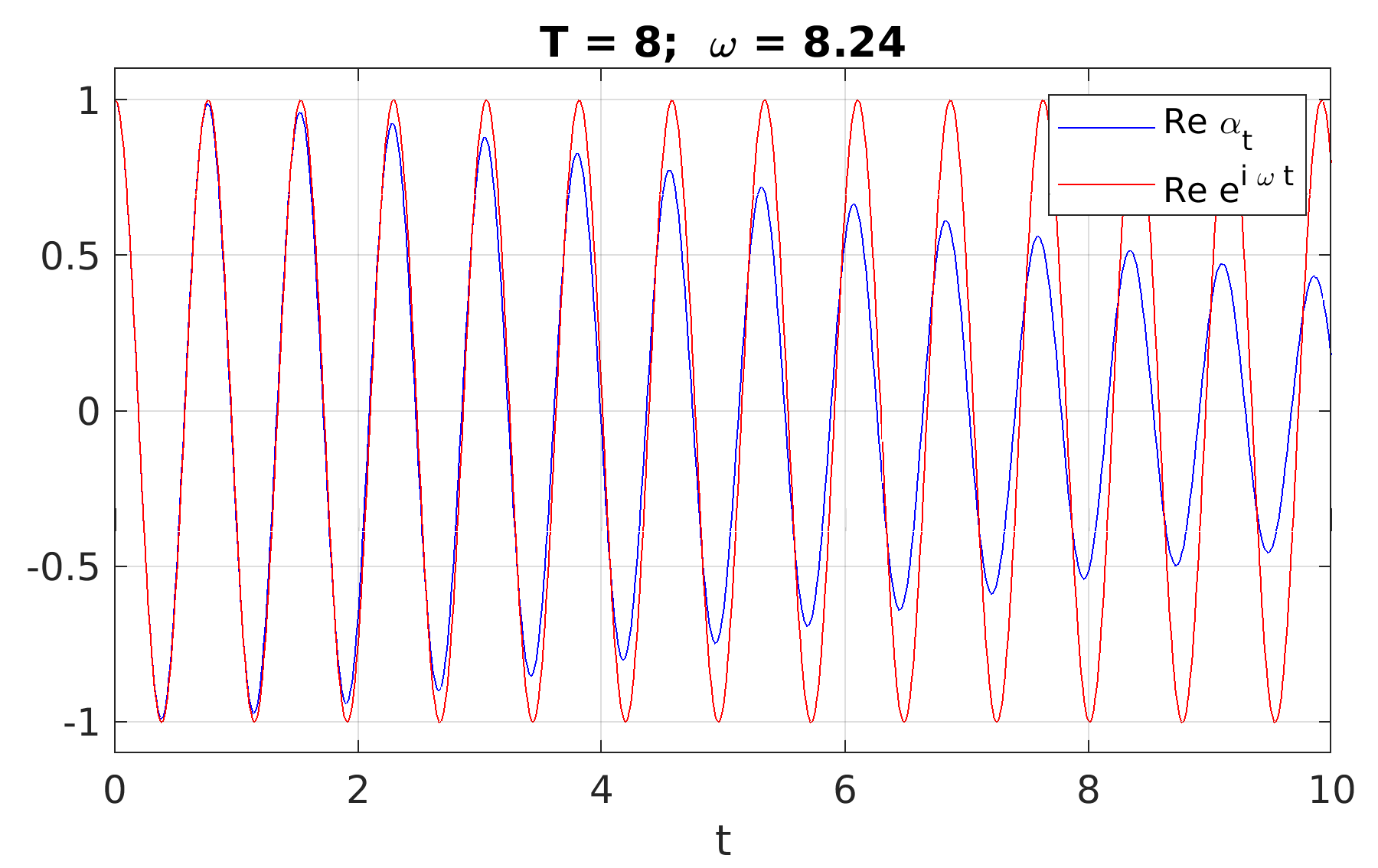}
    \caption{\label{figOmega} A comparison of the real part of the autocorrelation function $ \alpha_t $ with a pure cosine wave $ \cos\omega t = \Real e^{i\omega t} $ for the $T=8$ coherent observable from Figure~\ref{figZ}. The frequency $ \omega $ was computed through~\eqref{eqAlphaN} using the finite-difference approximation of the generator.}
\end{figure}

Intriguingly, the $ \omega \approx 8.24$ frequency identified here through eigenfunctions of $K_T $ is close to an $ 8.18$ approximate eigenfrequency identified in \cite{DasEtAl20} through spectral analysis of a compact approximation to the generator $V$ constructed using reproducing kernel Hilbert space (RKHS) techniques. The RKHS-based eigenfrequency has a corresponding approximate Koopman eigenfunction, $ z_{\text{RKHS}}$, which has a qualitatively similar spatial structure on the L63 attractor as the approximate eigenfunction $z$ identified here (compare Figure~5 in \cite{DasEtAl20} with Figure~\ref{figPhi} of this paper). Moreover, both $z$ and $z_\text{RKHS}$ resemble an observable identified by Korda et al.\ \cite{KordaEtAl20} through a spectral analysis technique for Koopman operators utilizing Christoffel-Darboux kernels in frequency space (see Figure 13 in \cite{KordaEtAl20}). Having been identified via three independent data analysis techniques, it thus appears that the approximate eigenfrequency $\omega \simeq 8.2 $ and the corresponding approximate eigenfunction with the structure depicted in Figure~\ref{figPhi} are robust features of the L63 system, warranting further investigation. 

\section{\label{secConclusions}Concluding remarks}

In this paper we have studied how kernel integral operators constructed from delay-coordinate mapped data can identify, through their eigenfunctions, dynamically coherent features of measure-preserving, ergodic dynamical systems. We have shown that a class of eigenfunctions of such operators lead to complex-valued observables with an approximately cyclical evolution, behaving as $ \epsilon $-approximate eigenfunctions of the Koopman operator for a bound $ \epsilon $ that decreases with the length of the embedding window. Such observables encapsulate a natural notion of dynamical coherence, so we have argued,  in the sense of having high regularity on the attractor, a well-defined oscillatory frequency, and a slowly decaying time-autocorrelation amplitude. In addition, the spectral bounds were  explicitly characterized as functions of the embedding window length, evolution time, and appropriate spectral gap parameters. 

These results extend previous work on integral operators approximating the point spectrum of the Koopman operator in the infinite-delay limit \cite{DasGiannakis19,Giannakis19} to the setting of mixing dynamical systems with continuous Koopman spectra. Thus, they provide a theoretical interpretation of the efficacy of a number of data-driven techniques utilizing delay embeddings, including DMDC \cite{BerryEtAl13}, HAVOK  analysis \cite{BruntonEtAl17}, NLSA \cite{GiannakisMajda11c,GiannakisMajda12a,GiannakisMajda13}, and SSA \cite{BroomheadKing86,VautardGhil89}, in extracting coherent signals from complex systems. An attractive aspect of these methods is that they are amenable to consistent data-driven approximation from time series data based on techniques originally developed in the context of spectral clustering \cite{VonLuxburgEtAl08}. In particular, the data-driven schemes are rigorously applicable in situations where the invariant measure is supported on non-smooth sets, such as fractal attractors, without requiring addition of stochastic noise to regularize the dynamics. 

As a numerical application, we have studied how eigenfunctions of kernel integral operators utilizing delay-coordinate maps identify coherent observables of the L63 model---a system known to have a unique SRB measure with mixing dynamics \cite{Tucker99,LuzzattoEtAl05}, and thus absence of non-constant Koopman eigenfunctions in $L^2$. We found that for a sufficiently long embedding window (of approximately 8 Lyapunov times) the kernel-based approach, realized using a symmetric Markov kernel constructed by bistochastic normalization \cite{CoifmanHirn13} of a variable-bandwidth Gaussian kernel \cite{BerryHarlim16}, identifies through its two leading non-constant eigenfunctions an observable of the L63 system exhibiting a highly coherent dynamical behavior. This observable has an oscillatory period of approximately 0.76 natural time units, and remains coherent at least out to 10 natural time units (approximately 9 Lyapunov timescales) as measured by a 0.4 threshold of its time-autocorrelation function. Spatially, its real and imaginary parts have a structure that could be qualitatively described as a wavenumber 1 azimuthal oscillation about the holes in the two lobes of the attractor; a pattern that resembles observables previously identified through Koopman spectral analysis techniques appropriate for mixing dynamical systems \cite{DasEtAl20,KordaEtAl20}.       

Possible applied directions stemming from this work include detection of coherence in prototype models for metastable regime behavior in atmospheric dynamics \cite{CrommelinMajda04}, as well as PDE models with intermittency in both space and time \cite{MajdaEtAl97}. On the theoretical side, it would be interesting to explore connections between the spectral results presented here and geometrical characterizations of coherence, including the characterization given in DMDC based on the multiplicative ergodic theorem \cite{BerryEtAl13} and the dynamic isoperimetry approach proposed in \cite{Froyland15}. It may also be fruitful  to employ coherent eigenfunctions of integral operators based on delay-coordinate maps to construct approximation spaces for pointwise and/or spectral approximation of Koopman and transfer operators, including the extended dynamic mode decomposition (EDMD) technique \cite{WilliamsEtAl15} and the RKHS compactification approaches proposed in \cite{DasEtAl20}.

\begin{acknowledgements}
    The author is grateful to Andrew Majda for his guidance and mentorship during a postdoctoral position at the Courant Institute from 2009--2012. He is especially grateful for his friendship and collaboration over the years. This research was supported by NSF grant 1842538, NSF grant DMS 1854383, and ONR YIP grant N00014-16-1-2649.   
\end{acknowledgements}

%
\section*{Conflict of interest}
The author declares that he has no conflict of interest.

\appendix

\section{\label{appSpecConv}Proof of Proposition~\ref{propSpecConv}}

It is convenient to introduce an intermediate integral operator $ \mathcal K_{T,\Delta t} : C(M) \to C(M) $, 
\begin{displaymath}
    \mathcal K_{T,\Delta t} f = \int_\Omega k_{T,\Delta t}(\cdot, x)f(x) \, d\mu,
\end{displaymath}
which integrates against the invariant measure $\mu $ using the discrete-time delay-coordinate map, and split the analysis of the spectral convergence of $\mathcal K_{T,\Delta t, N} $ to $\mathcal K_T$ to two subproblems involving the convergence of (i) $\mathcal K_{T,\Delta t, N}$ to $ \mathcal K_{T,\Delta t}$ as $N \to \infty$; and (ii) $ \mathcal K_{T,\Delta t}$ to $ \mathcal K_T$ as $ \Delta t \to 0$. We now consider these two subproblems, starting from the second one.  

\paragraph{Spectral convergence of $ \mathcal K_{T,\Delta t} $ to $  \mathcal K_T$ as $ \Delta t \to 0$} The uniform convergence of the kernels $ k_{T,\Delta t}$ to $ k_T$, i.e., $ \lim_{\Delta t \to 0 } \lVert k_{T,\Delta t} - k_T \rVert_{C(M\times M)}$ (see~\eqref{eqDTConvergence}), implies convergence of $\mathcal K_{T,\Delta t}$ to $\mathcal K_T$ in $C(M)$ operator norm. It then follows from results on spectral theory of compact operators \cite{Atkinson67,Chatelin11} that the analogous claims to Proposition~\ref{propSpecConv} hold for the eigenvalues and spectral projections,  $ \lambda_{\Sigma,T,\Delta t} $ and $ \Pi_{\Sigma,T,\Delta t}$, respectively, of $\mathcal K_{T,\Delta t}$. That is, we have
\begin{equation}
    \lim_{\Delta t \to 0 } \lambda_{j,T,\Delta t} = \lambda_{j,T}, \quad \lim_{\Delta t \to 0 } \Pi_{\Sigma,T,\Delta t} f = \Pi_{j,T} f, \quad \forall f \in C(M), 
    \label{eqSpecConv1}
\end{equation}
where $ \Pi_{\Sigma, T, \Delta t} : C(M) \to C(M)$ is the spectral projection of $ \mathcal K_{T,\Delta t}$ onto $ \Sigma$.

\paragraph{Spectral convergence of $ \mathcal K_{T,\Delta t,N} $ to  $ \mathcal K_{T,\Delta t} $ as $  N \to 0$} Unlike the $ \mathcal K_{T,\Delta t} \to \mathcal K_T$ case, the operators  $ \mathcal K_{T,\Delta t, N}$ need not converge to $\mathcal K_{T,\Delta t}$ in $C(M)$ operator norm. In essence, this is because the weak convergence of measures in~\eqref{eqWeakConv} is not uniform with respect to $f$, even upon restriction to functions in $C(M)$. Nevertheless, as shown in \cite{VonLuxburgEtAl08}, the continuity of the kernel $k_T$ is sufficient to ensure that for a fixed $ f \in C(M)$, a restricted form of uniform convergence holds, namely 
\begin{equation}
    \label{eqGC}
    \lim_{N\to\infty} \sup_{g \in \mathcal G} \lvert \mathbb E_{\mu_N} g - \mathbb E_\mu g \rvert = 0,   
\end{equation}
where $\mathcal G \subset C(M)$ is the set of functions given by
\begin{displaymath}
    \mathcal G = \{ k_T(x,\cdot)f(\cdot) \mid x \in M \}.
\end{displaymath}
A collection of functions satisfying~\eqref{eqGC} is known as a Glivenko-Cantelli class. 

The Glivenko-Cantelli property turns out to be sufficient to ensure that as $N \to \infty$, the sequence of operators $\mathcal K_{T,\Delta t, N}$ exhibits a form of convergence to $ K_{T,\Delta t}$, called collectively compact convergence which, despite being weaker than norm convergence, is sufficiently strong to imply the spectral convergence claims in Proposition~\ref{propSpecConv}. We state the relevant definitions for collectively compact convergence below, and refer the reader to \cite{Chatelin11,VonLuxburgEtAl08} for additional details. 

\begin{definition} \label{defConv} Let $A_N : E \to E$ be a sequence of bounded linear operators on a Banach space $E$, indexed by $N \in \mathbb N$. 
    \begin{enumerate}[(i), wide]
        \item $A_N$ is said to converge to an operator $A : E \to E$ if $A_N $ converges to $A$ strongly, and for every uniformly bounded sequence $f_N \in E $ the sequence $(A - A_N) f_N$ has compact closure.   
        \item $ \{ A_N \} $ is said to be collectively compact if $ \cup_{N \in \mathbb N} A_n B$ has compact closure in $E$, where $B$ is the unit ball of $E$.   
        \item $A_N$ is said to converge to $A$ collectively compactly if it converges pointwise, and there exists $N_0 \in N$ such that for all $N > N_0$, $ \{ A_N \}_{N> N_0}$ is collectively compact.
    \end{enumerate}
\end{definition}

It can be shown that operator norm convergence implies collectively compact convergence, and collectively compact convergence implies compact convergence. The latter, is in turn sufficient for the following spectral convergence result:

\begin{lemma}
    \label{lemSpecConv}With the notation of Definition~\ref{defConv}, suppose that $A_N$ converges to $A$ compactly. Let $\lambda \in \sigma_p(A) $ be an isolated eigenvalue of $A$ with finite multiplicity $m$, and $\Sigma $ an open neighborhood of $ \lambda $ such that $ \sigma(A) \cap \Sigma = \{ \lambda \}$. Then, the following hold:
    \begin{enumerate}[(i), wide]
        \item There exists $N_0 \in \mathbb N $, such that for all $ N > N_0$, $ \sigma(A_N) \cap \Sigma$ is an isolated subset of the spectrum of $A_N$, containing at most $m$ distinct eigenvalues whose multiplicities sum to $ m $. Moreover, as $N \to \infty$, every element of $ \sigma(A_{N>N_0}) \cap \Sigma$ converges to $ \lambda $.
        \item As $N \to \infty $, the spectral projections of $A_N$ onto $ \sigma(A_N) \cap \Sigma $, defined in the sense of the holomorphic functional calculus, converge strongly to the spectral projection of $A_N$ onto $ \{ \lambda \} $.  
    \end{enumerate}
\end{lemma}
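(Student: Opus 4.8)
The result is classical in the theory of compact and collectively compact operator approximation; the plan is to follow the approach of \cite{Chatelin11} and \cite{VonLuxburgEtAl08}, reducing all three assertions to a single uniform resolvent estimate and then arguing with Riesz spectral projections. Shrinking $\Sigma$ if necessary, I assume $\overline\Sigma \cap \sigma(A) = \{\lambda\}$ and $0 \notin \overline\Sigma$ (possible since $\lambda$ is isolated, and in the application to Proposition~\ref{propSpecConv} nonzero), and fix a positively oriented Jordan contour $\Gamma \subset \Sigma \setminus \{\lambda\}$ enclosing $\lambda$. With $R(z) = (A - z)^{-1}$ for $z \in \rho(A)$ and, where defined, $R_N(z) = (A_N - z)^{-1}$, I introduce the spectral projections
\begin{displaymath}
    P = -\frac{1}{2\pi i}\oint_\Gamma R(z)\,dz, \qquad P_N = -\frac{1}{2\pi i}\oint_\Gamma R_N(z)\,dz ,
\end{displaymath}
so that $P$ is the spectral projection of $A$ onto $\{\lambda\}$ with $\dim \ran P = m$, and, once $P_N$ is shown to be well defined, $P_N$ is the spectral projection of $A_N$ associated with $\sigma(A_N)\cap\operatorname{int}\Gamma$.

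The key step --- and the one I expect to be the main obstacle --- is to show that there is $N_0$ such that $\Gamma \subset \rho(A_N)$ for all $N > N_0$, with $\sup_{N > N_0}\sup_{z\in\Gamma}\lVert R_N(z)\rVert < \infty$; more generally the same should hold with $\Gamma$ replaced by an arbitrary compact subset $K$ of $\rho(A)$. This is exactly where the compact convergence hypothesis (Definition~\ref{defConv}(i)) enters, and I would argue by contradiction. If the estimate failed there would be $N_k \to \infty$, points $z_k \in K$, and unit vectors $f_k$ with $\lVert (A_{N_k} - z_k)f_k\rVert \to 0$. From $(A - z_k)f_k = (A_{N_k} - z_k)f_k + (A - A_{N_k})f_k$, the relative compactness of $\{(A - A_{N_k})f_k\}$, and the compactness of $K$, I extract a subsequence along which $z_k \to z_* \in K \subseteq \rho(A)$ and $(A - A_{N_k})f_k$ converges, hence $(A - z_*)f_k$ converges and so $f_k = R(z_*)(A - z_*)f_k$ converges to some $f_*$ with $\lVert f_*\rVert = 1$. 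But $A - A_N \to 0$ strongly and $\sup_N\lVert A - A_N\rVert < \infty$ (the latter being forced by Definition~\ref{defConv}(i), since an unbounded sequence $\lVert (A - A_N)g_N\rVert$ with $(g_N)$ bounded could not have relatively compact closure), so $(A - A_{N_k})f_k \to 0$, whence $(A - z_*)f_* = 0$ and therefore $f_* = 0$, a contradiction.

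Given this estimate, the rest is routine and I would carry it out as follows. From the second resolvent identity $R_N(z) - R(z) = R_N(z)(A - A_N)R(z)$, the uniform bound on $\lVert R_N(z)\rVert$, and the uniformity of strong convergence on the compact set $\{R(z)g : z \in \Gamma\}$, one obtains $R_N(z) \to R(z)$ strongly and uniformly in $z \in \Gamma$; passing this through the contour integral gives $P_N \to P$ strongly, which is Claim~(ii). For Claim~(i), the inclusion $\Gamma \subset \rho(A_N)$ makes $\sigma(A_N)\cap\operatorname{int}\Gamma$ an isolated part of $\sigma(A_N)$ whose eigenvalues, counted with algebraic multiplicity, sum to $\dim \ran P_N$. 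I would then show $\dim \ran P_N = m$ for large $N$: choosing a basis $e_1,\dots,e_m$ of $\ran P$ and using $P_N e_j \to e_j$ gives $\dim \ran P_N \geq m$, while the reverse inequality --- the delicate point --- follows from a compactness argument exploiting that the $P_N$ are uniformly bounded (by the resolvent estimate) and that $\{P_N\}$ inherits a compact-convergence property from $\{A_N\}$, via the representation $R_N(z) = z^{-1}(A_N R_N(z) - I)$ together with $0 \notin \operatorname{int}\Gamma$, so that $\dim \ran P_N \geq m+1$ along a subsequence would yield, in the limit, $m+1$ linearly independent vectors fixed by $P$, contradicting $\dim \ran P = m$. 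Finally, applying the uniform resolvent estimate and this rank count to a small circle $\Gamma'$ about any hypothetical limit point $z_* \in \overline\Sigma \setminus \{\lambda\} \subseteq \rho(A)$ of eigenvalues $z_{N_k} \in \sigma(A_{N_k})\cap\Sigma$ shows that $\sigma(A_N)\cap\operatorname{int}\Gamma' = \emptyset$ for large $N$, contradicting $z_{N_k} \to z_*$; hence every element of $\sigma(A_N)\cap\Sigma$ converges to $\lambda$, completing Claim~(i).
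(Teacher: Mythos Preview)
The paper does not give its own proof of Lemma~\ref{lemSpecConv}; it is quoted as a classical result from \cite{Chatelin11} (cf.\ \cite{VonLuxburgEtAl08}), and your outline is precisely the Riesz-projection\,/\,uniform-resolvent argument developed there, so in that sense your proposal and the paper's (implicit) proof by citation coincide.

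One small point deserves attention. Your contradiction argument for the key estimate shows that $A_N - z$ is bounded below on $\Gamma$ for large $N$, i.e., that $\Gamma$ eventually avoids the \emph{approximate} point spectrum of $A_N$; it does not by itself exclude residual spectrum, so $\Gamma\subset\rho(A_N)$ is not yet secured in an arbitrary Banach space. In the intended application the operators $\mathcal K_{T,\Delta t,N}$ are finite-rank (hence compact) and you have arranged $0\notin\overline\Sigma$, so every spectral point of $A_N$ in $\overline\Sigma$ is an eigenvalue and your argument is complete there. For the lemma in full generality one would instead factor $A_N - z = (A-z)\bigl(I - R(z)(A-A_N)\bigr)$ and use the compact-convergence hypothesis to show $\bigl\lVert \bigl(R(z)(A-A_N)\bigr)^2\bigr\rVert \to 0$ uniformly on $\Gamma$, which makes the second factor invertible and simultaneously yields the uniform bound on $\lVert R_N(z)\rVert$; this is the route taken in \cite{Chatelin11}, and it also streamlines the ``$\dim\ran P_N\le m$'' step you flag as delicate.
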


Using a similar approach as Proposition~13 in \cite{VonLuxburgEtAl08}, which employs, in particular, the Glivenko-Cantelli property in~\eqref{eqGC}, it can be shown that as $N \to \infty$, $ \mathcal K_{T,\Delta t, N}$ converges collectively compactly to $ \mathcal K_{T,\Delta t}$. Then, Lemma~\ref{lemSpecConv}, in conjunction with the fact that $\mathcal K_{T,\Delta t} $ is compact (so every nonzero element of its spectrum is an isolated eigenvalue of finite multiplicity), implies that 
\begin{equation}
    \lim_{N \to \infty } \lambda_{j,T,\Delta t,N} = \lambda_{j,T,\Delta t}, \quad \lim_{N \to \infty } \Pi_{\Sigma,T,\Delta t, N} f = \Pi_{\Sigma, T, \Delta t} f, \quad \forall \lambda_{j,T,\Delta t} \in \Sigma, \quad \forall f \in C(M), 
    \label{eqSpecConv2}
\end{equation}
where $\Sigma $ is the spectral neighborhood in the statement of the proposition. Proposition~\eqref{propSpecConv} is then proved by combining~\eqref{eqSpecConv1} and~\eqref{eqSpecConv2}. \qed

\bibliographystyle{spmpsci}      

%
%

\end{document}